\newtheorem{theorem}{Theorem}[section]
\newtheorem{proposition}[theorem]{Proposition}
\newtheorem{corollary}[theorem]{Corollary}
\newtheorem{lemma}[theorem]{Lemma}
\newtheorem{problem}[theorem]{Problem}
\theoremstyle{remark}
\newtheorem{example}[theorem]{Example}
\theoremstyle{definition}
\newcommand{\nolisttopbreak}{\vspace{\topsep}\nobreak\@afterheading}
\newenvironment{listproof}[1][\proofname]{\begin{proof}[#1]\mbox{}\nolisttopbreak}{\end{proof}}
\newcommand{\parder}[3][Default]{
	\frac{\partial \ifthenelse{\equal{#1}{Default}}{}{^{#1}}#2}{
              \partial #3 \ifthenelse{\equal{#1}{Default}}{}{^{#1}}}}
\newcommand{\bcdot}{$\discretionary{\mbox{$ \cdot $}}{}{}$}
\newcommand{\sdots}{\vdots\,\vdots\,\vdots\,}
\newcommand{\jac}{{\mathcal J}}
\newcommand{\GL}{\operatorname{GL}}
\newcommand{\trdeg}{\operatorname{trdeg}}
\newcommand{\traildeg}{\operatorname{codeg}}
\newcommand{\tr}{\operatorname{tr}}
\newcommand{\imp}{{\mathversion{bold}$\Rightarrow$} }
\newcommand{\C}{{\mathbb C}}
\newcommand{\R}{{\mathbb R}}
\newcommand{\Q}{{\mathbb Q}}
\newcommand{\Z}{{\mathbb Z}}
\newcommand{\tp}{^{\rm t}}
\newcommand{\rk}{\operatorname{rk}}
\newcommand{\Mat}{\operatorname{Mat}}
\numberwithin{equation}{section}
\title{Rational maps $H$ for which $K(tH)$ has transcendence degree 2 over $K$}
\author{Michiel de Bondt\footnote{The author was supported by the Netherlands 
        Organisation for Scientific Research (NWO).} \\
        Institute for Mathematics, Astrophysics and Particle Physics \\
        Radboud University Nijmegen \\
        \emph{Email address:} M.deBondt@math.ru.nl}
\begin{document}

\maketitle

\begin{abstract}
We classify all rational maps $H \in K(x)^n$ for which $\trdeg_K K(tH_1, tH_2, \allowbreak 
\ldots, tH_n) \le 2$, where $K$ is any field and $t$ is another indeterminate. 

Furthermore, we classify all such maps for which additionally 
$\jac H \cdot H = \tr \jac H \cdot H$ (where $\jac H$ is the Jacobian matrix of $H$), i.e.\@
$$
\sum_{i=1}^n H_i \parder{}{x_i} H_k = \sum_{i=1}^n H_k \parder{}{x_i} H_i
$$
for all $k \le n$. This generalizes a theorem of Paul Gordan and Max N{\"o}ther, in which
both sides and the characteristic of $K$ are assumed to be zero.

Besides this, we use some of our tools to obtain several results about $K$-subalgebras 
$R$ of $K(x)$ for which $\trdeg_K L = 1$, where $L$ is the fraction field of $R$.

We start with some observations about to what extent, L{\"u}roth's theorem can be generalized.
\end{abstract}

\paragraph{Key words:} transcendence degree, L{\"u}roth field extension, homogeneous, \\ 
quasi-translation, Jacobian matrix.

\paragraph{MSC 2010:} 12E05, 12F20, 13B22, 13N15.

\section{Introduction}

Throughout this paper, $x = (x_1,x_2,\ldots,x_n)$ denotes an $n$-tuple of variables
and $y = (y_1,y_2,\ldots,y_m)$ denotes an $m$-tuple of variables, where $n,m \ge 1$.
Furthermore, we use $K$ to denote an arbitrary field and $K[x]$ and $K[y]$ for the polynomial
rings over $K$ which are generated by the components of $x$ and $y$ respectively, i.e.\@
$K[x] = K[x_1,x_2,\ldots,x_n]$ and $K[y] = K[y_1,y_2,\ldots,y_m]$. Denote by $K(x)$ the 
fraction field of $K[x]$. Then $K[x] \subseteq K(x)$.

We denote the $m$-tuples of elements of a set $S$ by $S^m$. If $G \in K(x)^m$,
then $G$ is of the form $G = (G_1,G_2,\ldots,G_m)$ such that $G_i \in K(x)$ for all $i$,
and we can take the \emph{Jacobian matrix} $\jac G$ of $G$ (with respect to $x$), 
which is defined by
$$
\jac G = \jac_x G = \left( \begin{array}{cccc}
\parder{}{x_1} G_1 & \parder{}{x_2} G_1 & \cdots & \parder{}{x_n} G_1 \\
\parder{}{x_1} G_2 & \parder{}{x_2} G_2 & \cdots & \parder{}{x_n} G_2 \\
\vdots & \vdots & \sdots & \vdots \\
\parder{}{x_1} G_m & \parder{}{x_2} G_m & \cdots & \parder{}{x_n} G_m
\end{array} \right)\mbox{.}
$$
If $n=m$, then we call the determinant of this matrix the \emph{Jacobian determinant} of $G$ 
(with respect to $x$).

If $g \in K(x)$, then the Jacobian matrix $\jac g$ of $g$ (with respect to $x$)
is just the Jacobian matrix of $g$ as a one-tuple. If 
$b = (b_1, b_2, \ldots, b_m)$ is an $m$-tuple, then we write $ab$ as an abbreviation of 
$(ab_1, ab_2, \ldots, ab_m)$, where $a$ may be anything, such as a variable $t$, 
a polynomial $g$, or $\parder{}{y_1}$. 

We denote the degree of the term with the largest (smallest) degree of $G$ by
$\deg G$ ($\traildeg G$). Here, $G$ may be a polynomial map or just a single polynomial.
To ensure that $\deg G$ ($\traildeg G$) is always defined, we set 
$$
\deg 0 := -\infty \qquad \mbox{and} \qquad \traildeg 0 := +\infty\mbox{.}
$$
We write $a|_{b=c}$ for the substitution of $b$ by $c$ in $a$. Here, $b$ represents a variable 
or a sequence of variables. Furthermore, $c$ may depend on $b$, so it matters that the 
substitution is performed only one time. 

\medskip
Suppose that $R$ is an integral domain, with fraction field $L$.
We say that $H \in L^m$ is {\em primitive} (over $R$), if $H \ne 0$ and
\begin{equation} \label{primdef}
g^{-1} H \in R^m ~\Longleftrightarrow~ g^{-1} \in R
\end{equation}
for every nonzero $g \in R$. We say that $H \in L^m$ is {\em superprimitive} (over $R$),
if $H \ne 0$ and \eqref{primdef} even holds for every nonzero $g \in L$. 
The reader may prove the following:
\begin{enumerate}

\item[(i)] $H$ is primitive, if and only if $H \in R^m$ and every common divisor
of $H_1, H_2, \allowbreak \ldots, H_m$ is a divisor of $1$, i.e.\@ is a unit in $R$.

\item[(ii)] $H$ is superprimitive, if and only if $H \in R^m$ and for every $r \in R$,
every common divisor of $rH_1, rH_2, \ldots, rH_m$ is a divisor of $r$. 

\end{enumerate}
Notice that the implication $\Longleftarrow$ in \eqref{primdef} is trivially satisfied
if $H \in R^m$, so we can replace $\Longleftrightarrow$ by $\Longrightarrow$
in \eqref{primdef} if we restrict the definition of (super)primitive to elements of $R^m$.

We use $t$ to denote a single variable 
and $R[t]$ and $R(t)$ to denote the corresponding polynomial ring over $R$ and its 
fraction field respectively. We say that a polynomial in $L[t]$ is 
{\em (super)primitive} if the sequence of its coefficients is (super)primitive.

We say that $R$ has the \emph{PSP-property} if every primitive element of $R^m$
is superprimitive. Notice that we may replace $R^m$ by $L^m$, $R[t]$ or $L[t]$ in
the definition of the PSP-property. A \emph{PSP-domain} is an integral domain which 
satisfies the PSP-property. 

If $H \in L^m$, then we call $\tilde{H}$ a (super)primitive part of $H$ if $\tilde{H}$
is (super)primitive and dependent over $L$ of $H$. Now suppose that $H \in L^m$ and
that $\tilde{H}$ is a superprimitive part of $H$, and say that $\tilde{H} = g^{-1}H$, where
$g \in L$. If $H \in R^m$, then we can derive from $g \tilde{H} = H \in R^m$ and the 
fact that $\tilde{H}$ is superprimitive that $g \in R$. 

If $H$ is primitive, then we can deduce from $g^{-1} H = \tilde{H} \in R^m$ that 
$g^{-1} \in R$, so $g$ is a unit in $R$. In general, any primitive part of $H$ is equal 
to $\tilde{H}$ up to multiplication by a unit in $R$. So it is justified to talk about 
\emph{the} primitive part of $H$ if $H$ has a superprimitive part. 

If $H \in R^m$, then we call $g$ a \emph{greatest common divisor} 
if any common divisor of the components of $H$ is also a common divisor of $g$. 
One can check that a greatest common divisor is unique up to multiplication 
by a unit, and that the following statements are equivalent:
\begin{enumerate}

\item[(1)] $H = 0$ or $H$ has a superprimitive part.

\item[(2)] Every element of $R^m$ which is dependent over $L$ of $H$ has a greatest
common divisor.
 
\end{enumerate}
We call $R$ a \emph{GCD-domain} if every element of $R^2$ has a greatest common divisor.
One can verify that for elements $a_i$ in an integral domain, the following holds:
if $\gcd\{a_m,a_{m+1}\}$ exists, then
$$
\gcd\{a_1,a_2,\ldots,a_{m-1},\gcd\{a_m,a_{m+1}\}\} = 
\gcd\{a_1,a_2,\ldots,a_{m-1},a_m,a_{m+1}\}\mbox{,}
$$
provided either the left-hand side or the right-hand side exists. Using that, 
one can show that for a GCD-domain $R$ and any positive integer $m$, 
every element of $R^m$ has a greatest common divisor, or equivalently, every nonzero 
$H \in L^m$ has a superprimitive part. 

On account of the essential uniqueness of primitive 
parts, we may call a superprimitive part of $H$ \emph{the} primitive part of $H$.
From this, we deduce that GCD-domains are PSP-domains. 
We call $R$ a \emph{GL-domain} if Gauss' Lemma holds for $R$, i.e.\@ if
$f_1$ and $f_2$ are primitive polynomials in $R[t]$, where $t$ is another
variable, then $f_1 f_2$ is a primitive polynomial. It is known that PSP-domains, 
in particular GCD-domains, are GL-domains (see e.g.\@ \cite[Prop.\@ 3.2]{MR2310957}). 

Now suppose that $R$ is a GCD-domain, with fraction field $L$, and $f_1, f_2 \in R[t]$. 
Then there are $a_1, a_2 \in R$ such that $a_1^{-1} f_1$ and $a_2^{-1} f_2$ are the primitive 
parts of $f_1$ and $f_2$ respectively. Let $b := \gcd\{a_1,a_2\}$.
Since $L[t]$ is a Euclidean domain, we can find the greatest common divisor of $f_1$ and 
$f_2$ over $L$ by way of the Euclidean GCD-algorithm. Let $g$ be the greatest common divisor 
of $f_1$ and $f_2$ over $L$. We can choose $g$ such that $b^{-1}g$ is primitive. 

Using the fact that GCD-domains are GL-domains, one can verify that 
$g = \gcd\{f_1,f_2\}$ over $R[t]$. In other words, $R[t]$ is a GCD-domain.
So if $R$ is a GCD-domain, then $R[t]$ is a GCD-domain as well.
By induction on $m$, it follows that $R[y_1,y_2,\ldots,y_m]$ is a GCD-domain and 
hence also a PSP-domain and a GL-domain.

If $R$ is Noetherian, then $R$ is a GCD-domain if and only if $R$ is a PSP-domain
if and only if $R$ is a GL-domain. For instance, $\Z[\sqrt{-5}]$ is not a GCD-domain, 
so $\Z[\sqrt{-5}]$ is neither a PSP-domain nor a GL-domain.

\medskip
In theorem \ref{hmgrk2}, which can be seen as the main theorem, we classifiy
all rational maps $H \in K(x)^n$ for which $\trdeg_K K(tH_1, tH_2, \ldots, tH_n) \le 2$.
To prove Theorem \ref{hmgrk2} and several other results, we will use the following 
classical theorem.

\begin{theorem}[L{\"u}roth-Igusa] \label{luroth}
Assume that $L \supseteq K$ is a subfield of $K(x)$, such that $\trdeg_{K} (L) = 1$.
Then we can write $L = K(p/q)$, such that $p, q \in K[x]$. 
\end{theorem}

Theorem \ref{luroth} is known as L{\"u}roth's theorem, but L{\"u}roth only proved
the case where $K = \C$ and $n = 1$ in \cite{MR1509855}, published in 1876.
L{\"u}roth's result was generalized to arbitrary $n$ in 1887 by Gordan in 
\cite{MR1510417}, and to arbitrary $K$ in 1910 by Steinitz in \cite{MR1580791}. 
Both generalizations were combined in 1951 by Igusa in \cite{MR0048860}. 
See \cite[\S~1.1-1.2]{MR1770638} for a proof of Theorem \ref{luroth} and more 
information about it. In 1953, Samuel published a short and elegant way to get from $n=1$ 
to arbitrary $n$ in the case where $K$ is infinite (see \cite{MR0058251} and the first 
part of the proof of Theorem 3 in \cite[\S~1.2]{MR1770638}). 

Theorem \ref{luroth} can be generalized to $\trdeg_{K} (L) = 2$, but only with additional
requirements. In \cite{MR0099990}, Zariski gives a counterexample over any field $K$ of 
characteristic $c \ge 3$, namely
\begin{equation} 
L = K\Big(x_1^c,x_2^c,-x_1^{c+1}-x_2^{c+1}+\frac{x_1^2+x_2^2}{2}\Big)\mbox{,}
\end{equation}
which indicates that $K(x)$ must be separable over $L$.

The necessity of the requirement that $K$ is algebraically closed, has been 
shown in 1951 by Segre in \cite{MR0042746}. 
The example $\R(\phi_1,\phi_2,\phi_3)$, with
\begin{equation} \label{OjaR}
\phi = \frac{1-x_2^2}{1+x_2^2} \left( \begin{array}{c} 
2 + x_1^2 \\ x_1\, (3 + x_1^2) \\ \sqrt{2} 
\end{array} \right) + \frac{2\,x_2}{1+x_2^2} \left( \begin{array}{c} 
x_2\,(2 + x_1^2) \\ -\sqrt{2} \\ x_1\, (3 + x_1^2) 
\end{array} \right)
\end{equation}
by Ojanguren in \cite{Ojanguren1990} shows this necessity as well. A slight variation 
on \eqref{OjaR} is to take an umbrella rotation offset of $45$ degrees. 
This variation makes $\Q(\phi_1,\phi_2,\phi_3)$ an example as well, because with an 
offset of 45 degrees, the coefficients are just in $\Z$ instead of $\Z[\sqrt{2}]$:
\begin{equation} \label{OjaQ}
\phi = \frac{1-x_2^2}{1+x_2^2} \left( \begin{array}{c} 
2 + 2\,x_1^2 \\ x_1\, (3 + x_1^2) - 1 \\ x_1\, (3 + x_1^2) + 1 
\end{array} \right) + \frac{2\,x_2}{1+x_2^2} \left( \begin{array}{c} 
x_2\,(2 + 2\,x_1^2) \\ -(x_1\, (3 + x_1^2) + 1) \\ x_1\, (3 + x_1^2) - 1 
\end{array} \right)\mbox{.}
\end{equation}
Over $K[\mathrm{i}]$, only two generators are needed, in agreement with theorem 
\ref{castelnuovo} below, namely $\phi_1$ and $\phi_2 + \mathrm{i} \phi_3$.

\begin{theorem}[Castelnuovo-Zariski] \label{castelnuovo}
Assume that $L \supseteq K$ is a subfield of $K(x)$, such that $\trdeg_K (L) = 2$.
Suppose that $K$ is algebraically closed and $K(x)$ is separable over $L$ (both are
fulfilled if $K = \C$). Then we can write $L = K(p/q,p^{*}\!/q^{*})$, 
such that $p, q, p^{*}\!, q^{*} \in K[x]$. 
\end{theorem}

Theorem \ref{castelnuovo} was proved in 1894 for $K = \C$ and $n=2$ by Castelnuovo 
in \cite{MR1510838}, and in 1958 for arbitrary algebraically closed fields by Zariski 
in \cite{MR0099990}, but still for $n = 2$. 
See \cite[\S~1.1]{MR1770638} for more information about Theorem \ref{castelnuovo}.
Since I did not find a proof of Theorem \ref{castelnuovo} for arbitrary $n$ in the 
literature, 
we will complete the proof of this theorem at the end of this section, by reduction 
to the case $n=2$. We do that essentially by applying the above-mentioned reduction
by Samuel, using the following theorem.

\begin{theorem} \label{PSS}
Let $L \subseteq K(x)$. Then $L$ is finitely generated over $K$. Say that
$L = K(\phi_1,\phi_2,\ldots,\phi_s)$. Then
$$
\rk \jac(\phi_1,\phi_2,\ldots,\phi_s) \le \trdeg_K (L)\mbox{,}
$$
and equality holds, if and only if $K(x)$ is separable over $L$.
\end{theorem}

The case where $\trdeg_K (L) = n$ of Theorem \ref{PSS} 
can be extracted from Theorems 10 and 13 in 
\cite{DBLP:conf/mfcs/PandeySS16}, but again, I did not find a full proof in the 
literature. We give a full proof at the end of this section.

Theorem \ref{luroth} cannot be generalized to $\trdeg_{K} (L) = 3$:
a conclusion like $L = K(p/q,p^{*}\!/q^{*},p^{**}\!/q^{**})$ cannot be drawn. 
This was shown in 1972 by Artin and Mumford in \cite{MR0321934}, and by 
Clemens and Griffiths in \cite{MR0302652}, and in 1971 by Iskovskih and Manin in 
\cite{MR0291172}. See \cite[\S~1.1]{MR1770638} for more information 
about those papers. 

But there are no explicit counterexamples in the above three papers. There is however a
claim in \cite{MR0291172} that a counterexample $K(\phi_1,\phi_2,\phi_3,\phi_4)$ 
exists, where $\frac16 \in K$ needs to be assumed, such that the primitive part of $\phi$ 
has degree $24$.

We give two explicit counterexamples $L = K(\phi_1,\phi_2,\phi_3,\phi_4)$ below. 
The first example is over fields $K$ with $\frac16$, and reads as follows:
\begin{equation} \label{Fermat}
\phi = \left( \begin{array}{c} x_1 \\ -x_1 \\ 0 \\ -1 \end{array} \right)
- {\frac {3\,x_1\, ( {x_1}^{3}+2\,x_2-1 ) }{{x_1}^{6}-{x_3}^{3}-3\,{x_2}^{2}+3\,x_2-1}}
\left( \begin{array}{c} 1-x_2 \\ x_2 \\ x_3 \\ -{x_1}^{2} \end{array} \right)\mbox{.}
\end{equation}
\eqref{Fermat} satisfies the following equation:
$$
\phi_1^3 + \phi_2^3 + \phi_3^3 + \phi_4^3 + 1^3 = 0\mbox{.} 
$$
Since $x_1^2 = -(\phi_4 + 1) / (\phi_1 + \phi_2) \in L$ and 
$L(x_1) = K(x_1,x_2,x_3)$, we have $\trdeg_K L = 3$. 
Hence $x_4\, (\phi,1)$ is a parametrization of the Fermat cubic threefold.

The second example is over fields $K$ with $\frac1{22}$, and reads as follows:
\begin{equation} \label{Klein}
\phi = \left( \begin{array}{c} 0 \\ 0 \\ x_1 \\ 0 \end{array} \right) 
- {\frac {x_1\,{x_3}^{2}+2\,x_1\,x_2+1}{{x_1}^{4}\,x_3+x_2\,{x_3}^{2}+{x_2}^{2}}}
\left( \begin{array}{c} -{x_1}^{2} \\ x_3 \\ x_2 \\ 1 \end{array} \right)\mbox{,}
\end{equation}
\eqref{Klein} satisfies the following equation:
$$
\phi_1^2\,\phi_2 + \phi_2^2\,\phi_3 + \phi_3^2\,\phi_4 + \phi_4^2\,1 + 1^2\,\phi_1 = 0\mbox{.}
$$
Since $x_1^2 = -\phi_1/\phi_4 \in L$ and $L(x_1) = K(x_1,x_2,x_3)$, we have $\trdeg_K L = 3$. 
Hence $x_4\, (\phi,1)$ is a parametrization of the Klein cubic threefold.

In the corollary on page 64 of Murre's paper \cite{MR0352089}, the assumption that $K$ 
is algebraically closed can be omitted, if one just assumes that the cubic threefold 
over $K$ is smooth over the algebraic closure of $K$. From this corollary,
one can infer that \eqref{Fermat} and \eqref{Klein} are indeed counterexamples to the 
generalization of Theorem \ref{luroth} to $\trdeg_{K}(L) = 3$. Here, and only here, we 
use the fact that $\frac12 \in K$.

Since $x_3 \in L$ for both \eqref{Fermat} and \eqref{Klein}, both examples are 
counterexamples of the generalization of Theorem \ref{luroth} to 
$\trdeg_{K}(L) = 2$ as well. This is because one can see $K(x_3)$ as the base field, 
which is indeed not algebraically closed, just as $\R$ in \eqref{OjaR} and
$\Q$ in \eqref{OjaQ}.
\eqref{Klein} would not be a counterexample if $\frac1{11} \notin K$, because
$(1,3,3^2,3^3,3^4)$ is a singular point of the Klein cubic threefold in that case.

Examples \eqref{Fermat} and \eqref{Klein} were constructed with the method on the 
beginning of page 19 of \cite{Debarre2014}. In both examples, 
$[K(x_1,x_2,x_3):L] = 2$ and $K(x_1,x_2,x_3)$ is separable over $L$, but the latter
would not hold if $\frac12 \notin K$. Also in both \eqref{Fermat} and \eqref{Klein}, 
$\phi$ is generically $2$-to-$1$, because there exists a rational 
function $a$ such that $\phi(-x_1,a,x_3) = \phi(x_1,x_2,x_3)$. Again, this would not 
hold if $\frac12 \notin K$, because $-x_1 = x_1$ and $a = x_2$ in that case.

As a consequence of Theorems \ref{luroth} and \ref{castelnuovo}, we have the following.

\begin{theorem} \label{lurothlike}
Suppose that $H \in K(x)^m \setminus \{0\}^m$. Take any $i$ such that $H_i \ne 0$.
\begin{enumerate}

\item[\upshape (i)] If $\trdeg_K K(tH) = 1$, then $K(H) = K(H_i)$.

\item[\upshape (ii)] If $\trdeg_K K(tH) = 2$, then $K(H) = K(p/q,H_i)$ for some
$p,q \in K[x]$ which do not depend on the choice of $i$.

\item[\upshape (iii)] If $\trdeg_K K(tH) = 3$ and $K = \C$, then $K(H) = K(p/q,p^{*}\!/q^{*}\!,H_i)$
for some $p,q,p^{*}\!,q^{*} \in K[x]$, which do not depend on the choice of $i$.

\item[\upshape (iv)] If $\trdeg_K K(tH) = 3$, $K$ is algebraically closed, and $K(x)$ is separable
over $K(tH) \cap K(x)$, then $K(H) = K(p/q,p^{*}\!/q^{*}\!,H_i)$
for some $p,q,p^{*}\!,q^{*} \in K[x]$, which do not depend on the choice of $i$. 

\end{enumerate}
\end{theorem}

\begin{proof}
Let $L := K(tH) \cap K(x)$. Notice that
$$
L = K\left(\frac{H_1}{H_i},\frac{H_2}{H_i},\ldots, \frac{H_m}{H_i}\right)\mbox{.}
$$
Since $tH_i$ is transcendental over $L$ and $K(tH) = L(tH_i)$, it follows that
$$
\trdeg_K (L) = \trdeg_K K(tH) - 1\mbox{.}
$$
If $\trdeg_K K(tH) = 1$, then $L = K$ because $K$ is algebraically closed in $K(x)$.
If $\trdeg_K K(tH) = 2$, then $L = K(p/q)$ with $p,q$ as in Theorem \ref{luroth}.
If $\trdeg_K \allowbreak K(tH) = 3$, then $L = K(p/q,p^{*}\!/q^{*})$ with $p,q,p^{*}\!q^{*}$ 
as in Theorem \ref{castelnuovo}, provided the conditions of Theorem \ref{castelnuovo}
are satisfied. This proves Theorem \ref{lurothlike}.
\end{proof}

In Section 2, we provide a structure formula for rational maps $H \in K(x)^m$, for which
$\trdeg_K K(tH) \le 2$, where $t$ is another indeterminate. The case where $K = \C$
and $H$ is a homogeneous polynomial map was proved in \cite[Th.~2.1]{MR2179727}, using
a reducibility theorem for algebraically closed fields, namely the Bertini-Krull theorem
(see Theorem 37 of \cite[\S~3.3]{MR1770638}). 

We remove the condition that $K$ is algebraically closed by using Theorem \ref{luroth},
which is valid for any $K$. The case where $K$ is algebraically closed of Theorem 
\ref{luroth} can also be obtained from the Bertini-Krull theorem.

In Section 3, we deduce some other results from the tools of Section 2 and 
Theorem \ref{luroth}. One of these results is that in the situation of Theorem
\ref{luroth}, $K(p/q)$ contains a nonconstant polynomial, if and only if there are 
$\lambda, \mu \in K$ such that $\lambda p + \mu q = 1$, and either $p \notin K$ or 
$q \notin K$. Some other results are about the integrality of $p/q$ or another 
generator of $K(p/q)$ over $K$-subalgebras of $K(p/q)$.

In Section 4, we apply the above result of Section 2 to generalize a theorem of
Paul Gordan and Max N{\"o}ther in \cite{MR1509898}, see also \cite[Th.~5.3]{1501.05168}, 
which comes down to the following on account of \cite[Th.~4.1]{MR3177043}. 
Suppose that $H \in \C[x]^n$ is a homogeneous polynomial map over $\C$, such that 
$\trdeg_{\C} \C(H) = 2$. If $\jac H \cdot H = 0$ and $g = \gcd\{H_1,H_2,\ldots,H_n\}$, 
then $\tilde{H} := g^{-1}H$ is a polynomial map which satisfies 
$\jac \tilde{H} \cdot \tilde{H}(y) = 0$. 

We generalize this result to any field $K$ and any rational map $H \in K(x)^n$ for which
$\trdeg_K K(tH) \le 2$. Furthermore, we prove that 
$$
\jac H \cdot \jac H = \tr \jac H \cdot \jac H \; 
\Longleftrightarrow \; \jac \tilde{H} \cdot \tilde{H} = 0\mbox{,}
$$
where $\tr M$ is the trace of a square matrix $M$.

\begin{proof}[Proof of Theorem \ref{castelnuovo}]
Take $\phi_1,\phi_2,\ldots,\phi_s$ as in Theorem \ref{PSS}.
Then $\rk \jac (\phi_1,\phi_2,\allowbreak\ldots,\phi_s) = 2$ because $K(x)$ is
separable over $L$. Assume without loss of generality that 
$$
\rk \jac (\phi_1,\phi_2,\ldots,\phi_s,x_3,x_4,\ldots,x_n) = n\mbox{.}
$$
On account of Theorem \ref{PSS}, 
$$
\trdeg_K (\phi_1,\phi_2,\ldots,\phi_s,x_3,x_4,\ldots,x_n) = n
= \trdeg_K (\phi_1,\phi_2,\ldots,\phi_s) + n - 2\mbox{,}
$$
so $x_3,x_4,\ldots,x_n$ are algebraically independent over $L$, and
$K(x)$ is separable over $L(x_3,x_4,\ldots,x_n)$. 
Hence we deduce from the case $n = 2$ that 
$$
L(x_3,x_4,\ldots,x_n) = K(\eta_1,\eta_2,x_3,x_4,\ldots,x_n)\mbox{,}
$$
where $\eta_1,\eta_2 \in K(x)$.

Notice that $K$ is infinite, because $K$ is algebraically closed.
Now the rest of the proof is similar to Samuel's reduction, as in
the first part of the proof of Theorem 3 in \cite[\S~1.2]{MR1770638}.
\end{proof}

\begin{proof}[Proof of Theorem \ref{PSS}]
The first claim of Theorem \ref{PSS} appears in Theorem 1 in \cite[\S~1.1]{MR1770638}. 
To prove the rest, we show the following three statements.
\begin{itemize}

\item \emph{$\rk \jac (\phi_1,\phi_2,\ldots,\phi_s) \le \trdeg_K (L)$.}

Suppose that $\rk \jac (\phi_1,\phi_2,\ldots,\phi_s) > \trdeg_K (L)$, and let $\bar{K}$
be the algebraic closure of $K$. Let $r = \rk \jac (\phi_1,\phi_2,\ldots,\phi_s)$ and 
assume without loss of generality that $\rk \jac (\phi_1,\phi_2,\ldots,\phi_r) = r$.
Then 
$$
\rk \jac (\phi_1,\phi_2,\ldots,\phi_r) > \trdeg_K (L) 
\ge \trdeg_K \big(K(\phi_1,\phi_2,\ldots,\phi_r)\big)\mbox{.}
$$
It follows that $\phi_1,\phi_2,\ldots,\phi_r$ are algebraically dependent over $K$, and hence
over $\bar{K}$, too.

Let $m = r$ and take $\psi \in \bar{K}[y] \setminus \{0\}$, such that 
$\psi(\phi_1,\phi_2,\ldots,\phi_r) = 0$ and $\psi$ has minimum degree.
By the chain rule,
\begin{align*}
0 &= \jac \big(\psi(\phi_1,\phi_2,\ldots,\phi_r)\big) \\
&= \big(\jac_y\psi\big)\big|_{y=(\phi_1,\phi_2,\ldots,\phi_r)} \cdot 
\jac (\phi_1,\phi_2,\ldots,\phi_r)\mbox{.}
\end{align*}
Since the rows of $\jac (\phi_1,\phi_2,\ldots,\phi_r)$ are independent
over $K(x)$ and hence over $\bar{K}(x)$ as well, we conclude that 
$$
\big(\jac_y\psi\big)\big|_{y=(\phi_1,\phi_2,\ldots,\phi_r)} = 0\mbox{.}
$$
As $\psi$ has minimum degree, $\jac_y\psi = 0$ follows. Consequently,
$\psi \in \bar{K}[y_1^c,y_2^c,\allowbreak \ldots,y_r^c]$ where $c$ is the 
characteristic of $\bar{K}$. Since $\bar{K}$ is a perfect field, 
i.e.\@ $\bar{K}$ is closed under taking $c$-th root, 
$\sqrt[c]{\psi} \in \bar{K}[y]$. This contradicts that $\psi$ has minimum degree.

\item \emph{If $K(x)$ is separable over $L$, then 
$\rk \jac (\phi_1,\phi_2,\ldots,\phi_s) \ge \trdeg_K (L)$.}

Suppose that $K(x)$ is separable over $L$ and that 
$\rk \jac (\phi_1,\phi_2,\ldots,\phi_s) < \trdeg_K (L)$.
Let $m = n - \trdeg_K (L) + s$. Since $K(x)$ is separable over $L$, 
there exists a transcendence basis $\xi_{s+1}, \xi_{s+2}, \ldots, \xi_m$ of $K(x)$ over $L$,
such that $K(x)$ is separable over $L(\xi_{s+1}, \xi_{s+2}, \ldots, \xi_m)$.
Furthermore,
$$
\rk \jac (\phi_1,\phi_2,\ldots,\phi_s,\xi_{s+1}, \xi_{s+2}, \ldots, \xi_m)
< \trdeg_K (L) + m - s = n\mbox{,}
$$
so we can take $i$ such that $\jac x_i = e_i\tp$ is not contained in the row space of 
$\jac (\phi_1,\phi_2,\ldots,\phi_s,\xi_{s+1}, \xi_{s+2}, \ldots, \xi_m)$. 

Take $\psi \in K(y)[t]$, such that 
$\psi(\phi_1,\phi_2,\ldots,\phi_s,\xi_{s+1}, \xi_{s+2}, \ldots, \xi_m)[t]$ is
the minimum polynomial of $x_i$ over $L(\xi_{s+1}, \xi_{s+2}, \ldots, \xi_m)$.
By the chain rule,
\begin{align*}
0 &= \jac \big(\psi(\phi_1,\ldots,\phi_s,\xi_{s+1}, \ldots, \xi_m)(x_i)\big) \\
&= \big(\jac_{y,t}\psi\big)
\big|_{y=(\phi_1,\ldots,\phi_s,\xi_{s+1}, \ldots, \xi_m)}\big|_{t=x_i} \cdot 
\jac (\phi_1,\ldots,\phi_s,\xi_{s+1}, \ldots, \xi_m,x_i)\mbox{.}
\end{align*}
Since the last row of $\jac (\phi_1,\ldots,\phi_s,\xi_{s+1}, \ldots, \xi_m,x_i)$
is independent of the rows above it, we conclude that 
$$
\big({\textstyle\parder{}{t}\psi}\big)
\big|_{y=(\phi_1,\ldots,\phi_s,\xi_{s+1}, \ldots, \xi_m)}\big|_{t=x_i} = 0\mbox{.}
$$
As $\psi(\phi_1,\phi_2,\ldots,\phi_s,\xi_{s+1}, \xi_{s+2}, \ldots, \xi_m)[t]$ is
the minimum polynomial of $x_i$ over $L(\xi_{s+1}, \xi_{s+2}, \ldots, \xi_m)$, 
$\parder{}{t} \psi = 0$. So $\psi \in K(y)[t^c]$, where $c$ is the characteristic of $K$,
and $x_i$ is inseparable over $L(\xi_{s+1}, \xi_{s+2}, \ldots, \xi_m)$. A contradiction.

\item \emph{If $\rk \jac (\phi_1,\phi_2,\ldots,\phi_s) \ge \trdeg_K (L)$, 
then $K(x)$ is separable over $L$.}

Suppose that $\rk \jac (\phi_1,\phi_2,\ldots,\phi_s) \ge \trdeg_K (L)$. 
Let $r = \rk \jac (\phi_1,\phi_2,\ldots,\allowbreak \phi_s)$
and assume without loss of generality that $\rk \jac(\phi_1,\phi_2,\ldots,\phi_r) = r$.
Take $\xi_{r+1}, \xi_{r+2},\ldots,\xi_n \in K(x)$, such that 
$\rk \jac(\phi_1,\phi_2,\ldots,\phi_r,\xi_{r+1}, \xi_{r+2},\allowbreak 
\ldots,\xi_n) = n$.

From the first assertion, it follows that 
$\trdeg_K K(\phi_1,\phi_2,\ldots,\phi_r,\xi_{r+1}, \xi_{r+2},\allowbreak
\ldots,\xi_n) = n$ as well. Consequently, $\phi_1,\phi_2,\ldots,\phi_r$ are
algebraically independent over $K$, so $\trdeg_K (L) \ge r \ge \trdeg_K (L)$.
Therefore, $\phi_1,\phi_2,\ldots,\phi_r$ is a transcendence basis of $L$ over $K$, and
$\xi_{r+1}, \xi_{r+2},\ldots,\xi_n$ are algebraically independent over $L$.

Since $\xi_{r+1}, \xi_{r+2},\ldots,\xi_n$ are algebraically independent over $L$, 
and $K(x)$ is algebraic over $L(\xi_{r+1}, \xi_{r+2},\ldots,\xi_n)$, 
it follows that it suffices to prove that $K(x)$ is separable over
$L(\xi_{r+1}, \xi_{r+2},\ldots,\xi_n)$. On account of a transitivity property
for algebraic separable field extensions, it suffices to show that $K(x)$ is separable over
$K(\phi_1,\phi_2,\ldots,\phi_r,\xi_{r+1}, \xi_{r+2},\ldots,\xi_n)$.

So assume that there exists a $\theta \in K(x)$ which is inseparable over 
$K(\phi_1,\phi_2,\allowbreak\ldots,\phi_r,\xi_{r+1}, \xi_{r+2},\ldots,\xi_n)$. 
Let $m = n$ and take $\psi \in K(y)[t]$, such that 
$$
\psi(\phi_1,\phi_2,\ldots,\phi_s,\xi_{s+1}, \xi_{s+2}, \ldots, \xi_n)[t]
$$
is the minimum polynomial of $\theta$ over $L(\xi_{s+1}, \xi_{s+2}, \ldots, \xi_n)$.
Since $\theta$ is inseparable over $L(\xi_{r+1}, \xi_{r+2},\ldots,\xi_n)$,
it follows that $\parder{}{t} \psi = 0$, so the last entry of
$\big(\jac_{y,t}\psi\big)
\big|_{y=(\phi_1,\ldots,\phi_s,\xi_{s+1}, \ldots, \xi_n)}\big|_{t=\theta}$
is zero. The first $n$ rows of $\jac (\phi_1,\ldots,\phi_s,\allowbreak
\xi_{s+1}, \ldots, \xi_n,\theta)$ are independent over $K$, and by the chain rule,
\begin{align*}
0 &= \jac\big(\psi(\phi_1,\ldots,\phi_s,\xi_{s+1}, \ldots, \xi_n)(\theta)\big) \\
&= \big(\jac_{y,t}\psi\big)
\big|_{y=(\phi_1,\ldots,\phi_s,\xi_{s+1}, \ldots, \xi_n)}\big|_{t=\theta} \cdot 
\jac (\phi_1,\ldots,\phi_s,\xi_{s+1}, \ldots, \xi_n,\theta)\mbox{.}
\end{align*}
Consequently, the other entries of $\big(\jac_{y,t}\psi\big)
\big|_{y=(\phi_1,\ldots,\phi_s,\xi_{s+1}, \ldots, \xi_n}\big|_{t=x_i}$
are zero as well.

So $\psi \in K(y_1^c,y_2^c,\ldots,y_n^c)[t^c]$, 
where $c$ is the characteristic of $K$, and $\sqrt[c]{\psi} \in \bar{K}(y)[t]$, where
$\bar{K}$ is the algebraic closure of $K$. Let $B$ be a basis of $\bar{K}$ as
a vector space over $K$. Then $\bar{K}(y)[t]$ is a free module over $K(y)[t]$ with 
basis $B$. Since 
$\psi(\phi_1,\phi_2,\ldots,\phi_s,\xi_{s+1}, \xi_{s+2}, \ldots, \xi_n)(t) \ne 0$,
there exists a $b \in B$, such that for the coefficient $\psi^{*}$ of $b$ in
$\sqrt[c]{\psi}$, 
$$
\psi^{*}(\phi_1,\phi_2,\ldots,\phi_s,\xi_{s+1}, \xi_{s+2}, \ldots, \xi_n)(t) \ne 0\mbox{.}
$$
Notice that $\deg_t \psi^{*} \le \deg_t \sqrt[c]{\psi} < \deg_t \psi$. 
Since $\bar{K}(x)$ is a vector space over $K(x)$ with basis $B$, it follows that 
$$
\psi^{*}(\phi_1,\phi_2,\ldots,\phi_s,\xi_{s+1}, \xi_{s+2}, \ldots, \xi_n)(\theta) = 0\mbox{.}
$$
As $\psi^{*}(\phi_1,\phi_2,\ldots,\phi_s,\xi_{s+1}, \xi_{s+2}, \ldots, \xi_n)(t) \ne 0$,
we have a contradiction with the definition of $\psi$. \qedhere

\end{itemize}
\end{proof}

\section{A structure theorem for homogeneous polynomial maps of transcendence degree 2}
\label{bertinirk2}

We first prove some general statements about the transcendence degree of $K(tH)$
over $K$. Recall that $t$ is another variable, so $\trdeg_K K(x,t) = n + 1$.

\begin{proposition} \label{ttrdeg}
Assume that $H \in K(x)^m$. Then the following hold.
\begin{enumerate}

\item[\upshape (i)] $\trdeg_K K(tH) = \trdeg_K K(tgH)$ for every nonzero $g \in K(x)$.

\item[\upshape (ii)] $\trdeg_K K(H) \le \trdeg_K K(tH) \le \trdeg_K K(H) + 1$.

\item[\upshape (iii)] $\trdeg_K K(H) = \trdeg_K K(tH)$, if and only if $t$ is algebraically
independent over $K$ of (the components of) $tH$.

\item[\upshape (iv)] If the primitive part $\tilde{H}$ of $H$ is homogeneous and $H$ has a nonzero
component $H_j$ which cannot be expressed as a quotient of two homogeneous polynomials of
the same degree, then $\trdeg_K K(H) = \trdeg_K K(tH)$.

\end{enumerate}
\end{proposition}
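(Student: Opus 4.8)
The plan is to prove the four items in order, leaning on items (i)--(iii) to reduce (iv) to a statement about homogeneity. For (i), I would scale the transcendence generator: the field $K(tgH)$ is obtained from $K(tH)$ by the substitution $t \mapsto g^{-1}t$ composed with the fact that $g \in K(x)$ need not lie in either field, so instead I would argue via the $K$-algebra generated by $t$ and the $tH_i$. Concretely, set $s = gt$; then $s$ is transcendental over $K(x) \supseteq K(H)$ exactly when $t$ is, and $K(x)(tH) = K(x)(sH)$ trivially since $g$ is a unit in $K(x)$, while $\trdeg_K K(tH) = \trdeg_K K(x)(tH) - \trdeg_K K(x) + \trdeg_{K(tH)} \bigl(K(x)(tH)\bigr)$ — this last bookkeeping is a bit delicate, so I would instead prefer the cleaner route below.

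For (ii) and (iii), the key observation is that $K(tH) \subseteq K(H)(t)$, and $\trdeg_K K(H)(t) = \trdeg_K K(H) + 1$; since $K(H) = K(t \cdot tH_1/ (tH_1) \cdots)$ is awkward, I would instead note $K(H) \subseteq K(tH)(t) $ is false in general, so the correct chain is: $tH_i / tH_j = H_i/H_j \in K(tH)$ for $H_j \neq 0$, hence $K(tH) \supseteq K(H_i/H_j : i)$, and moreover $tH_j \in K(tH)$ together with all the ratios generates everything, giving $K(tH) = K(H_i/H_j : i)(tH_j)$. Now $\trdeg_K K(H_i/H_j : i) \le \trdeg_K K(H) \le \trdeg_K K(H_i/H_j:i) + 1$ (adjoining one element $H_j$), and $tH_j$ is either transcendental over $K(H)$ or algebraic over it; combining these inequalities yields (ii). Item (iii) then follows because equality $\trdeg_K K(H) = \trdeg_K K(tH)$ forces $tH_j$ to be transcendental over $K(H_i/H_j:i)$, equivalently over $K(H)$ after accounting for whether $H_j$ itself is transcendental over $K(H_i/H_j:i)$ — and transcendence of $t$ over $K(tH)$ is equivalent to transcendence of $t$ over $K(H)$ since $K(tH) \subseteq K(H)(t)$, so $t$ transcendental over $K(tH)$ $\iff$ $t$ transcendental over $K(H)(tH) = K(H)(t)$, which is automatic; the real content is the reformulation "$t$ algebraically independent over $K$ of $tH$," which unwinds to "$\trdeg_K K(t, tH) = \trdeg_K K(tH) + 1$," and since $K(t,tH) = K(t)(H)$ has transcendence degree $\trdeg_K K(H) + 1$, this is exactly $\trdeg_K K(H) = \trdeg_K K(tH)$.

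For (iv), I would argue by contradiction: suppose $\trdeg_K K(H) < \trdeg_K K(tH)$, so by (iii) $t$ is algebraic over $K(tH)$. Then $t$ satisfies a polynomial relation with coefficients that are rational functions in the $tH_i$; clearing denominators gives $P(t, tH_1, \ldots, tH_m) = 0$ for some nonzero $P \in K[t, z_1, \ldots, z_m]$. Using the $\Z$-grading in which $t$ has weight $-1$ and each $z_i$ has weight $\deg \tilde H_i =: d$ (the common degree, since $\tilde H$ is homogeneous and the primitive-part scalar $g$ cancels in ratios), I would extract a weighted-homogeneous component: because $\tilde{H}$ is homogeneous, substituting $z_i = tH_i = t g \tilde H_i$ and tracking the standard grading on $x$ shows that only the weight-zero part of $P$ (in the $t$-versus-$z$ grading, suitably shifted) can survive. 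The main obstacle is exactly this homogeneity bookkeeping: I must show that the hypothesis "$H$ has a component $H_j$ not expressible as a quotient of two homogeneous polynomials of equal degree" forbids any such nontrivial weighted-homogeneous relation. The idea is that a weight-zero relation among $t$ and the $tH_i$ descends, after eliminating $t$, to a relation expressing a power of $H_j$ as a ratio of homogeneous polynomials of matching degree in the other $H_i$ and in $x$ — contradicting the hypothesis on $H_j$. I expect to set this up by first reducing via (i) to the case $H = \tilde H$ homogeneous (noting (i) lets us rescale $H$ freely, but the hypothesis on $H_j$ must be re-examined under rescaling, which is precisely why the hypothesis is phrased intrinsically), then invoking the homogeneity to split $P$ and derive the contradiction from its lowest- or highest-weight part.
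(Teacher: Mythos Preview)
Your treatment of (i)--(iii) is workable but noticeably more tangled than necessary. For (i), the idea you start with is exactly the paper's: since both $t$ and $tg$ are transcendental over $K(H_1,\ldots,H_m)$, there is a $K(H)$-isomorphism $K(H)(t)\to K(H)(tg)$ sending $t\mapsto tg$, which restricts to an isomorphism $K(tH)\cong K(tgH)$. You abandon this mid-sentence in favour of ``the cleaner route below,'' but no such route ever appears. For (ii), your claim that ``$K(H)\subseteq K(tH)(t)$ is false in general'' is itself false: $K(tH)(t)=K(tH,t)=K(H,t)\supseteq K(H)$, and this containment together with $K(tH)\subseteq K(H,t)$ gives (ii) immediately. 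The paper's argument is just that: substitute $t=1$ to see every algebraic relation among the $tH_i$ yields one among the $H_i$, and observe $K(tH)\subseteq K(H)(t)$. Your detour through the ratio field $K(H_i/H_j:i)$ is unnecessary. Your (iii) eventually reaches the correct equivalence $\trdeg_K K(tH,t)=\trdeg_K K(H)+1$, which is all the paper uses.

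For (iv) there is a real gap. Your plan is to reduce via (i) to the case $H=\tilde H$, prove the homogeneous case by a grading argument, and transfer back. But (i) only tells you $\trdeg_K K(tH)=\trdeg_K K(t\tilde H)$; it says nothing about $\trdeg_K K(H)$ versus $\trdeg_K K(\tilde H)$, and these can genuinely differ (take $\tilde H$ constant and $H=x_1\tilde H$). So even if you prove $\trdeg_K K(\tilde H)=\trdeg_K K(t\tilde H)$, you cannot conclude $\trdeg_K K(H)=\trdeg_K K(tH)$. Your proposed grading argument on $P(t,tH)=0$ also breaks down before the reduction: assigning weight $d$ to $z_i$ only matches the $x$-grading when $H_i$ itself is homogeneous of degree $d$, which is exactly what fails when $H\ne\tilde H$. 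The paper's key device, which you are missing, is the substitution $x\mapsto tx$: writing $H=(g/\tilde g)\tilde H$ with $\gcd(g,\tilde g)=1$, one gets
\[
H(tx)=\frac{g(tx)\,t^d}{\tilde g(tx)}\,\tilde H(x)=\tau(x,t)\,\tilde H(x),
\]
and the hypothesis on $H_j$ is used precisely to show $\tau\notin K(x)$, hence $\tau$ is transcendental over $K(x)$. Then $\trdeg_K K(H)=\trdeg_K K(H(tx))=\trdeg_K K(\tau\tilde H)=\trdeg_K K(t\tilde H)=\trdeg_K K(tH)$, the last two equalities by (i). This substitution is what links $K(H)$ to $K(t\tilde H)$; without it your argument does not close.
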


\begin{listproof}
\begin{enumerate}
 
\item[(i)] This follows from the fact that both $t$ and $tg$ are algebraically independent 
over $K$ of $H_1, H_2, \ldots, H_m$.

\item[(ii)] By substituting $t = 1$ in $tH$, we see that $f(tH) = 0 \Longrightarrow f(H) = 0$
for all $f \in K[y]$. Hence 
$$
\trdeg_K K(H) \le \trdeg_K K(tH) \le \trdeg_K K(H,t) = \trdeg_K K(H) + 1\mbox{.}
$$

\item[(iii)] This follows from $\trdeg_K K(tH,t) = \trdeg_K K(H,t) = \trdeg_K K(H) + 1$.

\item[(iv)] Suppose that the primitive part $\tilde{H}$ of $H$ is homogeneous of
degree $d$. Take $g,\tilde{g} \in K[x]$, such that $\gcd\{g,\tilde{g}\} = 1$
and $g\tilde{H} = \tilde{g}H$. 

If $g$ is homogeneous and $\tilde{g}$ is homogeneous of degree $\deg g + d$, then 
$H_j = g \tilde{H}_j / \tilde{g}$ is a quotient of two homogeneous polynomials of
the same degree, for any $j$ such that $H_j \ne 0$. So assume the opposite. 
We show that 
$$
\tau(x,t) := \frac{g(tx) \cdot t^d}{\tilde{g}(tx)} \notin K(x)\mbox{.}
$$
Suppose first that both $g$ and $\tilde{g}$ are homogeneous. Then
$$
\tau(x,t) := \frac{g(x) \cdot t^{\deg g + d}}{\tilde{g}(x) \cdot t^{\deg \tilde{g}}} \notin K(x)\mbox{,}
$$
because $\deg \tilde{g} \ne \deg g + d$ by assumption. Suppose next that
either $g$ or $\tilde{g}$ is not homogeneous, and say that $\tilde{g}$ is not homogeneous.
Then $\tilde{g}(tx)$ has an irreducible divisor $r$, for which
$\traildeg_t r < \deg_t r$. From $r|_{t=1} \mid \tilde{g}$ and $\gcd\{g,\tilde{g}\} = 1$,
it follows that $r|_{t=1} \nmid g$. Consequently, $r \nmid g(tx)$. So
$$
r \nmid g(tx) \cdot t^d \cdot f(x)
$$
for every $f \in K[x]$. Hence $\tau(x,t) \notin K(x)$.

Therefore, $\tau(x,t)$ is trancendental over $K(x)$. Since
$$
H(tx) = \frac{g(tx)}{\tilde{g}(tx)} \cdot \tilde{H}(tx) 
= \frac{g(tx) \cdot t^d}{\tilde{g}(tx)} \cdot \tilde{H}(x) 
= \tau(x,t) \cdot \tilde{H}(x)
$$
it follows that
$$
\trdeg_K K(H) = \trdeg_K K\big(H(tx)\big) = \trdeg_K K(t\tilde{H})\mbox{.}
$$
Now the result follows from (i). \qedhere

\end{enumerate}
\end{listproof}

Lemma \ref{hf} below connects polynomial maps $f \in K[y_1]^m$ with homogeneous 
polynomial maps $h \in K[y_1,y_2]^n$. This connection is used in Corollary \ref{hfc}.

\begin{lemma} \label{hf}
The following statements are equivalent:
\begin{enumerate}

\item[\upshape (1)] $h \in K[y_1,y_2]^m \setminus \{0\}^m$ is homogeneous of 
                    degree $s$ and $f = h(y_1,1)$.

\item[\upshape (2)] $f \in K[y_1]^m \setminus \{0\}^m$ has degree at most $s$ 
                    and $h = y_2^s f(y_1/y_2)$.

\end{enumerate}
Furthermore, if {\upshape (1)} or {\upshape (2)} holds, then
$$
g = \tilde{g}(y_1,1) \qquad \mbox{and} \qquad \tilde{g} = y_2^{\deg g} g(y_1/y_2)
$$
give a one--one correspondence between common divisors $g \in K[y_1]$ of 
$f_1,f_2,\ldots,\allowbreak f_m$ and common divisors $\tilde{g} \in K[y_1,y_2]$ 
of $h_1,h_2,\ldots,h_m$ for which $y_2 \nmid \tilde{g}$.
\end{lemma}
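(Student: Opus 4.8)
The plan is to first establish the equivalence of (1) and (2), then the correspondence of divisors. For $(1)\Rightarrow(2)$: if $h$ is homogeneous of degree $s$, then by Euler-type homogeneity $h(y_1,y_2) = y_2^s h(y_1/y_2, 1) = y_2^s f(y_1/y_2)$ as an identity in $K(y_1,y_2)$; I must check this actually lands in $K[y_1,y_2]^m$ and that $\deg f \le s$. Writing $h_k = \sum_{i+j=s} c_{ij} y_1^i y_2^j$, one gets $f_k = h_k(y_1,1) = \sum_{i} c_{i,s-i} y_1^i$, which has degree at most $s$, and $y_2^s f_k(y_1/y_2) = \sum_i c_{i,s-i} y_1^i y_2^{s-i} = h_k$ term by term. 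Nonzero $h$ forces nonzero $f$ since the map $h_k \mapsto h_k(y_1,1)$ is injective on homogeneous polynomials (it reconstructs $h_k$ by the displayed formula). For $(2)\Rightarrow(1)$: given $f$ of degree $\le s$, set $h = y_2^s f(y_1/y_2)$; then each $h_k = \sum_i a_i y_1^i y_2^{s-i}$ with $f_k = \sum_i a_i y_1^i$, which is manifestly a homogeneous polynomial of degree $s$ (the clearing of denominators works because $\deg f_k \le s$), is nonzero when $f$ is, and $h(y_1,1) = f$.

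For the divisor correspondence, assume (1)/(2). First note $\deg_{y_1} g = \deg g$ for any $g \in K[y_1]$, so the formulas $\tilde{g} = y_2^{\deg g} g(y_1/y_2)$ and $g = \tilde{g}(y_1,1)$ are exactly the specialization of the $(1)\Leftrightarrow(2)$ correspondence to $m=1$ with $s = \deg g$; in particular $\tilde{g}$ is homogeneous of degree $\deg g$ with $y_2 \nmid \tilde{g}$ (its $y_1^{\deg g}$-coefficient is the leading coefficient of $g$, which is nonzero), and the two formulas are mutually inverse bijections between $K[y_1]$ and $\{\tilde{g} \in K[y_1,y_2] : \tilde{g} \text{ homogeneous}, y_2 \nmid \tilde{g}\}$. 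It remains to see this bijection matches common divisors with common divisors. If $g \mid f_k$ in $K[y_1]$, write $f_k = g \cdot e_k$; applying $(2)\Rightarrow(1)$ to the degree-$\le s$ polynomial $f_k$ and using $\deg f_k = \deg g + \deg e_k$ (degrees are additive in $K[y_1]$), one checks by the same term-by-term computation that $y_2^s f_k(y_1/y_2) = \big(y_2^{\deg g} g(y_1/y_2)\big)\big(y_2^{\deg e_k} e_k(y_1/y_2)\big) \cdot y_2^{s - \deg g - \deg e_k}$, i.e. $h_k = \tilde{g} \cdot \tilde{e}_k \cdot y_2^{s-\deg f_k}$, so $\tilde{g} \mid h_k$. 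Conversely, if $\tilde{g} \mid h_k$ with $\tilde{g}$ homogeneous and $y_2 \nmid \tilde{g}$, substitute $y_2 = 1$: $\tilde{g}(y_1,1) = g$ divides $h_k(y_1,1) = f_k$ in $K[y_1]$. Hence $g$ is a common divisor of $f_1,\ldots,f_m$ iff $\tilde{g}$ is a common divisor of $h_1,\ldots,h_m$, and the bijection above restricts to the claimed one--one correspondence.

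The only genuinely delicate point is bookkeeping with the extra power of $y_2$: a common divisor $\tilde{g}$ of the $h_k$ need not have $y_2 \nmid \tilde{g}$ in general (e.g. $y_2^{s-\deg f_k}$ may be a common factor when all $\deg f_k < s$), which is precisely why the statement restricts to divisors with $y_2 \nmid \tilde{g}$; conversely when $y_2 \nmid \tilde{g}$ the substitution $y_2 = 1$ loses no information because $\deg \tilde{g}(y_1,1) = \deg_{y_1}\tilde{g} = \deg \tilde{g}$ by homogeneity. Everything else is the routine verification that clearing and restoring the denominator $y_2$ are inverse operations on polynomials of controlled degree, which I would carry out by comparing coefficients of $y_1^i y_2^{s-i}$ as indicated above.
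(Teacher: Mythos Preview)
Your approach is the same as the paper's: both arguments rest on the standard homogenization/dehomogenization correspondence $g \leftrightarrow y_2^{\deg g} g(y_1/y_2)$ and check that it respects divisibility in each direction by substituting $y_2=1$ one way and multiplying out the factorization $f_k = g\cdot e_k$ the other way.

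There is one small omission. In the converse step you write ``if $\tilde{g} \mid h_k$ with $\tilde{g}$ homogeneous and $y_2 \nmid \tilde{g}$'', but the lemma only hypothesizes that $\tilde{g}\in K[y_1,y_2]$ is a common divisor of the $h_k$ with $y_2\nmid\tilde{g}$; homogeneity is not assumed. The paper inserts exactly this missing line: since $h\ne 0$, some $h_i$ is a nonzero homogeneous polynomial, and in the graded integral domain $K[y_1,y_2]$ every divisor of a homogeneous element is itself homogeneous (compare top and bottom degree parts of a product). Once you add that one sentence, your bijection argument is complete and matches the paper's proof.
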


\begin{listproof}
\begin{description} 

\item[(1) \imp (2)] Assume (1). Since $h$ is homogenoeus of degree $s$, it follows
that 
$$
h = y_2^s h(y_2^{-1}y) = y_2^s h(y_1/y_2,1) = y_2^s f(y_1/y_2)\mbox{.}
$$
As $h \notin K(y_1,y_2) \setminus K[y_1,y_2]$ we deduce that $\deg f \le s$.

\item[(2) \imp (1)] Assume (2). 
Then $h \in K[y_1,y_2]$ because $\deg f \le s$. Furthermore,
$h$ is homogeous of degree $s$ and $f = 1^s f(y_1/1) = h(y_1,1)$.

\end{description}
Suppose first that $\tilde{g} \in K[y_1,y_2]$ such that $\tilde{g} \mid h_i$ 
for each $i \in \{1,2,\ldots,m\}$ and $y_2 \nmid \tilde{g}$, 
and that $g = \tilde{g}(y_1,1)$.
By substituting $y_2 = 1$, we see that $g \mid f_i$ for each $i$. Since 
$h_i$ is homogeneous of degree $s$ for some $i$, it follows that $\tilde{g}$ is
homogeneous, say of degree $\tilde{s}$. From (1) $\Rightarrow$ (2), we deduce
that $\deg g \le \tilde{s}$ and 
$$
\tilde{g} = y_2^{\tilde{s}} g(y_1/y_2) = 
y_2^{\tilde{s} - \deg g} \cdot y_2^{\deg g} g(y_1/y_2)\mbox{.}
$$
If $\deg g < \tilde{s}$, then it follows from (2) $\Rightarrow$ (1) that
$y_2^{\deg g} g(y_1/y_2) \in K[y_1,y_2]$, which contradicts $y_2 \nmid \tilde{g}$.
So $\deg g = \tilde{s}$ and
$$
\tilde{g} = y_2^{\deg g} g(y_1/y_2)\mbox{.}
$$
This proves the correspondence in one direction.

Suppose next that $g \in K[y_1]$ such that $g \mid f_i$ for each $i \in \{1,2,\ldots,m\}$, 
and that $\tilde{g} = y_2^{\deg g} \bcdot g(y_1/y_2)$. Then for each $i$, there exists an 
$a_i \in K[y_1]$ such that $g \cdot a_i = f_i$. Since 
$\deg a_i = \deg f_i - \deg g \le s - \deg g$, it follows from (2) $\Rightarrow$ (1) that
$$
\tilde{g} = y_2^{\deg g} g(y_1/y_2) \in K[y_1,y_2] 
\qquad \mbox{and} \qquad
y_2^{s-\deg g} a_i(y_1/y_2) \in K[y_1,y_2]
$$
for each $i$, and that $\deg \tilde{g} = \deg g$.
So 
$$
\tilde{g} \mid y_2^{\deg g} g(y_1/y_2) \cdot y_2^{s-\deg g} a_i(y_1/y_2) 
= y_2^s f_i(y_1/y_2) = h_i
$$
for each $i$. If $y_2 \mid \tilde{g}$, then 
$$
y_2^{-1} \tilde{g} \in K[y_1,y_2] 
\qquad \mbox{and} \qquad
g = 1^{-1} \tilde{g}(y_1,1)\mbox{,}
$$
and it follows from (1) $\Rightarrow$ (2) that $\deg g \le \deg \tilde{g} - 1$,
which contradicts $\deg \tilde{g} = \deg g$. So $y_2 \nmid \tilde{g}$.
This proves the correspondence in the other direction.
\end{listproof}

\begin{corollary} \label{hfc}
Let $L$ be an extension field of $K$ and $H \in L^m$, such that $H_i \ne 0$
for some $i \in \{1,2,\ldots,m\}$. 
Suppose that there are $p,q \in L$ such that $H_i^{-1}H \in K(p/q)^m$.

Then there exists an $f \in K[y_1]$, a $g \in L$ and a homogeneous polynomial
$h \in K[y_1,y_2]$, such that $f$ and $h$ are primitive, $\deg f = \deg h$, and
$$
H = H_i \cdot f_i^{-1}(p/q) \cdot f(p/q) = g \cdot h(p,q)\mbox{.}
$$
Furthermore, if $s = \deg h$, then $f$, $h$ and $s$ satisfy Lemma \ref{hf}.
\end{corollary}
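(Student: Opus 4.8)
The plan is to reduce everything to the one‑variable polynomial ring $K[y_1]$, which is a GCD‑domain, and then pass to two variables via Lemma~\ref{hf}. Write $v := p/q$, so that each component $H_i^{-1}H_j$ lies in $K(v)$. The first task is to realize the tuple $H_i^{-1}H$ as $\Phi(v)$ for some $\Phi \in K(y_1)^m$ with $\Phi_i = 1$. If $v$ is transcendental over $K$ this is automatic, since $K(v) \cong K(y_1)$ over $K$ via $y_1 \mapsto v$. If $v$ is algebraic over $K$, I take $\Phi_j$ to be the representative of $H_i^{-1}H_j$ of degree less than $[K(v):K]$; then $\Phi \in K[y_1]^m$, and since $H_i^{-1}H_i = 1$ we again get $\Phi_i = 1$.

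Since $K[y_1]$ is a GCD‑domain, the nonzero tuple $\Phi$ has a primitive part $f \in K[y_1]^m$; as $f$ and $\Phi$ are nonzero and dependent over $K(y_1)$, we have $\Phi = cf$ for some $c \in K(y_1)$, and reading off the $i$‑th component gives $c = f_i^{-1}$, so $\Phi = f_i^{-1}f$ and $f_i \ne 0$. (In the algebraic case $\Phi$ is already primitive, with $f_i = 1$.) I then check $f_i(v) \ne 0$ — because $v$ is transcendental over $K$ in the first case, and because $f_i = 1$ in the second — and substitute $y_1 = v$ into $\Phi_j = f_i^{-1}f_j$ to obtain $H_i^{-1}H_j = f_i(v)^{-1}f_j(v)$, i.e.\ $H = H_i \cdot f_i^{-1}(p/q) \cdot f(p/q)$, the first asserted identity, with $f$ primitive.

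Now put $s := \deg f$ and $h := y_2^s f(y_1/y_2)$. The implication (2)$\Rightarrow$(1) of Lemma~\ref{hf} gives that $h \in K[y_1,y_2]^m$ is homogeneous of degree $s$ with $f = h(y_1,1)$, so $f$, $h$, $s$ satisfy Lemma~\ref{hf} and $\deg h = s = \deg f$. For primitivity of $h$: by the divisor correspondence in Lemma~\ref{hf}, a common divisor of $h_1,\ldots,h_m$ not divisible by $y_2$ comes from a common divisor of $f_1,\ldots,f_m$, hence is a unit, while $y_2$ itself divides no common factor of all the $h_j$, since some $f_j$ has degree exactly $s$ and so $h_j$ has a nonzero $y_1^s$‑term. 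Finally $h_j(p,q) = q^s f_j(p/q)$ (valid as $q \ne 0$), whence $h_i(p,q) \ne 0$, and substituting $f_j(p/q) = q^{-s}h_j(p,q)$ into the first identity yields $H_j = H_i\, h_i(p,q)^{-1} h_j(p,q)$ for all $j$, i.e.\ $H = g \cdot h(p,q)$ with $g := H_i\, h_i(p,q)^{-1} \in L$.

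The only delicate point, and the step I would treat most carefully, is the first one: giving a uniform meaning to evaluation at $v = p/q$ whether or not $p/q$ is transcendental over $K$, and in particular ensuring that the distinguished component does not vanish at $v$, so that $f_i^{-1}(p/q)$ and $h_i(p,q)^{-1}$ are legitimate. Separating the transcendental and algebraic cases as above disposes of this; the rest is a routine combination of Lemma~\ref{hf} with the GCD structure of $K[y_1]$.
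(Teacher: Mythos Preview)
Your proof is correct and follows essentially the same route as the paper: take a primitive $f \in K[y_1]^m$ representing $H_i^{-1}H$, homogenize via $h := y_2^s f(y_1/y_2)$ with $s = \deg f$, invoke Lemma~\ref{hf} for the properties of $h$, and set $g = H_i/h_i(p,q)$. The paper's proof is terser and simply asserts the existence of a primitive $f$ with $H_i^{-1}H = f_i^{-1}(p/q)\,f(p/q)$ without separating the transcendental and algebraic cases for $p/q$ or explicitly checking that $f_i(p/q)$ and $h_i(p,q)$ are nonzero; your added care on these points is a genuine improvement in rigor rather than a different method.
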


\begin{proof}
Since $H_i^{-1}H \in K(p/q)^m$, there exists a primitive $f \in K[y_1]^m$ such that 
$H_i^{-1}H = f_i^{-1}(p/q) \cdot f(p/q)$. Let $s := \deg f$ and take 
$h = y_2^s f(y_1/y_2)$. From (2) $\Rightarrow$ (1) of Lemma \ref{hf},
it follows that $h$ is homogeneous of degree $s$. Furthermore, it follows from
Lemma \ref{hf} that $h$ is primitive.

Take $g := H_i/h_i(p,q)$. Then 
$$
H = \frac{H_i}{f_i(p/q)} \cdot f(p/q) 
= \frac{H_i}{q^s f_i(p,q)} \cdot q^s f(p/q) = g \cdot h(p,q)\mbox{.}
$$
Since $f$, $h$ and $s$ satisfy (2) of Lemma \ref{hf}, the last claim follows as well. 
\end{proof}

Proposition \ref{pf} below is a special case of Theorem \ref{ph}. Both
describe a situation where substitution commutes with taking the greatest common divisor. 

\begin{proposition} \label{pf}
Let $R$ be an integral $K$-domain. Take $f \in K[y_1]^m$ and $p \in R$. Then
\begin{equation} \label{feq}
\gcd\big\{f_1(p),f_2(p),\ldots,f_m(p)\big\} 
= \big({\rm gcd}\big\{f_1,f_2,\ldots,f_m\big\}\big)\big|_{y_1=p}\mbox{.}
\end{equation}
In particular, the left-hand side exists.
\end{proposition}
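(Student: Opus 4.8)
The plan is to transport two identities from the single-variable polynomial ring $K[y_1]$ — which is a PID, in particular a GCD-domain as recalled in the introduction — to $R$ along the $K$-algebra homomorphism $K[y_1] \to R$ that evaluates $y_1$ at $p$. The two identities I need express, respectively, that the greatest common divisor of $f_1,\ldots,f_m$ divides each $f_i$, and (Bézout, using that $K[y_1]$ is a PID) that this greatest common divisor lies in the ideal $(f_1,\ldots,f_m)$.

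First I would set $g := \gcd\{f_1,\ldots,f_m\}$ in $K[y_1]$; since $K[y_1]$ is a PID, the ideal $(f_1,\ldots,f_m)$ equals $(g)$, so I obtain $f_i = g\, a_i$ with $a_i \in K[y_1]$ for each $i$, and also $g = \sum_{i=1}^m c_i f_i$ with $c_i \in K[y_1]$. I would then apply the substitution $y_1 = p$ to both relations. Evaluating $f_i = g\, a_i$ gives $f_i(p) = g(p)\, a_i(p)$ in $R$, so $g(p)$ is a common divisor of $f_1(p),\ldots,f_m(p)$; evaluating $g = \sum_i c_i f_i$ gives $g(p) = \sum_{i=1}^m c_i(p)\, f_i(p)$, so every common divisor $d \in R$ of $f_1(p),\ldots,f_m(p)$ divides $g(p)$. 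Hence $g(p)$ is a greatest common divisor of the $f_i(p)$; in particular it exists, and it equals $\big(\gcd\{f_1,\ldots,f_m\}\big)\big|_{y_1=p}$, which is \eqref{feq}. The degenerate case $f = 0$ is automatically covered, since then $g = 0$ and both sides of \eqref{feq} vanish.

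I do not expect any genuine obstacle here: the whole argument is Bézout in $K[y_1]$ together with the functoriality of evaluation. The one feature really used is that $K[y_1]$ is a Bézout domain — i.e. that we work over a field and in a single variable — which is exactly what fails for $K[y_1,\ldots,y_m]$ with $m \ge 2$; the more general Theorem \ref{ph} must therefore supply a substitute for the identity $g = \sum_i c_i f_i$. The only bookkeeping that needs care in the present case is making sure the argument is stated so that it also handles components $f_i$ that vanish (and the all-zero case), which the ideal-theoretic formulation above does uniformly.
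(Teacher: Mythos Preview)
Your proof is correct and matches the paper's argument essentially line for line: set $g=\gcd\{f_1,\ldots,f_m\}$, note $g(p)\mid f_i(p)$, and use a B\'ezout identity $g=\sum_i c_i f_i$ in the PID $K[y_1]$ to show any common divisor of the $f_i(p)$ divides $g(p)$. The only cosmetic difference is that the paper writes the divisibility step directly rather than via the factorizations $f_i=g\,a_i$.
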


\begin{proof}
Let $g := \gcd\{f_1,f_2,\ldots,f_m\}$. Then $g \mid f_i$ and 
$g(p) \mid f_i(p)$ for each $i$. So $g(p)$ is a common divisor 
of $f_1(p),f_2(p),\ldots,f_m(p)$. 

Let $r$ be another common divisor of $f_1(p),f_2(p),\ldots,f_m(p)$.
Since $K[y_1]$ is a principal ideal domain, there are $a_i \in K[y_1]$ such 
that 
$$
g = a_1 f_1 + a_2 f_2 + \cdots + a_m f_m\mbox{.}
$$
Hence
$$
r \mid \big(a_1(p) f_1(p) + a_2(p) f_2(p) + \cdots + a_m(p) f_m(p)\big) = g(p)\mbox{.}
$$
So $g(p) = \gcd\big\{f_1(p),f_2(p),\ldots,f_m(p)\big\}$, which is
\eqref{feq}.
\end{proof}

\begin{theorem} \label{ph}
Let $R$ be an integral $K$-domain. Take $h \in K[y_1,y_2]^m$ and $p,q \in R$. 
If $h_i$ is homogeneous or zero for each $i \in \{1,2,\ldots,m\}$ and 
$(p,q)$ is superprimitive in $R$, then
\begin{equation} \label{heq}
\gcd\big\{h_1(p,q),h_2(p,q),\ldots,h_m(p,q)\big\} 
= \big({\rm gcd}\big\{h_1,h_2,\ldots,h_m\big\}\big)\big|_{y_1=p}\big|_{y_2=q}\mbox{.}
\end{equation}
In particular, the left-hand side exists.
\end{theorem}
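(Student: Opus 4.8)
The plan is to reduce \eqref{heq} to a statement about ideal membership in $K[y_1,y_2]$, and then to play the homogeneity of the $h_i$ off against the superprimitivity of $(p,q)$ by a descending induction on degree. First I would set $\tilde h := \gcd\{h_1,\ldots,h_m\}$, which exists because $K[y_1,y_2]$ is a GCD-domain; since each $h_i$ is homogeneous or $0$, the usual comparison of lowest and highest homogeneous components in a factorization $h_i = \tilde h \cdot b_i$ with $h_i \ne 0$ shows that $\tilde h$ is homogeneous or $0$. If $\tilde h = 0$ then all $h_i$ are $0$ and both sides of \eqref{heq} are $0$, so I may assume $\tilde h \ne 0$ and write $h_i = \tilde h\, a_i$ with each $a_i \in K[y_1,y_2]$ homogeneous or $0$ and $\gcd\{a_1,\ldots,a_m\} = 1$. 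Since $\tilde h(p,q)$ divides every $h_i(p,q) = \tilde h(p,q)\, a_i(p,q)$, it then suffices to show that an arbitrary common divisor $r \in R$ of $h_1(p,q),\ldots,h_m(p,q)$ divides $\tilde h(p,q)$; this yields both the existence of the left-hand side of \eqref{heq} and the asserted equality, up to a unit of $R$.

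The crucial step is to produce an integer $N$ with $(y_1,y_2)^{N} \subseteq (a_1,\ldots,a_m)$ in $K[y_1,y_2]$. I would argue by dehomogenization: with $d_i := \deg a_i$ and $f_i := a_i(y_1,1) \in K[y_1]$, so that $a_i = y_2^{d_i} f_i(y_1/y_2)$, any common divisor $g$ of $f_1,\ldots,f_m$ in $K[y_1]$ with $\deg g \ge 1$ homogenizes to a common divisor $y_2^{\deg g} g(y_1/y_2)$ of $a_1,\ldots,a_m$, contradicting $\gcd\{a_i\} = 1$; hence $\gcd\{f_1,\ldots,f_m\} = 1$. As $K[y_1]$ is a principal ideal domain, $1 = \sum_i c_i f_i$ for suitable $c_i \in K[y_1]$, and substituting $y_1 \mapsto y_1/y_2$ and clearing denominators by a large enough power of $y_2$ gives $y_2^{N_2} \in (a_1,\ldots,a_m)$; the symmetric computation gives $y_1^{N_1} \in (a_1,\ldots,a_m)$, whence $(y_1,y_2)^{N_1+N_2} \subseteq (a_1,\ldots,a_m)$. (Alternatively one may pass to $\overline K$, where $\gcd\{a_i\} = 1$ persists, note that homogeneous polynomials with no common factor have no common projective zero, and invoke the Nullstellensatz, using that ideal membership is unaffected by the base-field extension.)

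Now I would combine the two ingredients. Put $N := N_1 + N_2$. For each monomial $y_1^{j} y_2^{k}$ with $j+k = N$, write $y_1^{j} y_2^{k} = \sum_i g_i a_i$ with $g_i \in K[y_1,y_2]$; multiplying by $\tilde h$ and substituting $(y_1,y_2) = (p,q)$ yields $\tilde h(p,q)\, p^{j} q^{k} = \sum_i g_i(p,q)\, h_i(p,q)$, which is divisible by $r$. Then I would run a descending induction on $\ell$: assuming $r \mid \tilde h(p,q)\, p^{j} q^{k}$ for all $j+k = \ell$ with $\ell \ge 1$, for each $j+k = \ell-1$ both $\bigl(\tilde h(p,q)\, p^{j} q^{k}\bigr)\cdot p$ and $\bigl(\tilde h(p,q)\, p^{j} q^{k}\bigr)\cdot q$ are divisible by $r$, so superprimitivity of $(p,q)$ — in the form that every common divisor of $w p$ and $w q$ divides $w$, applied with $w = \tilde h(p,q)\, p^{j} q^{k}$ — gives $r \mid \tilde h(p,q)\, p^{j} q^{k}$ for all $j+k = \ell-1$. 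Descending from $\ell = N$ to $\ell = 0$ yields $r \mid \tilde h(p,q)$, completing the argument.

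I expect the main obstacle to be the ideal-membership step $(y_1,y_2)^{N} \subseteq (a_1,\ldots,a_m)$. Some care is needed there because the $a_i$ may have different degrees, so Lemma \ref{hf} does not apply verbatim, and because — in the Nullstellensatz variant — one must track both $\gcd$ and ideal membership across the extension $K \subseteq \overline K$. By contrast, the descending induction is short once one observes that superprimitivity of $(p,q)$ is exactly the hypothesis that lets one strip off one factor of $p$ or $q$ at each stage; this is also where the hypothesis is genuinely used, since the naive "cancellation" $r \mid b c_i$ for all $i$ with $\gcd\{c_i\}=1 \Rightarrow r \mid b$ can fail in an arbitrary integral domain.
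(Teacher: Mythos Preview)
Your proof is correct and follows essentially the same approach as the paper: dehomogenize to exploit the PID structure of $K[y_1]$ via a B\'ezout identity, rehomogenize to obtain powers of $y_1$ and $y_2$ (times the gcd) in the ideal generated by the $h_i$, and then use superprimitivity of $(p,q)$ inductively to strip off the extraneous powers of $p$ and $q$. The only cosmetic differences are that you factor out $\tilde h$ first and phrase the ideal-membership step as $(y_1,y_2)^N \subseteq (a_1,\ldots,a_m)$, and that your induction descends directly rather than ascending by contradiction; the paper works with the $h_i$ themselves and shows only $y_2^{S}\tilde g, y_1^{S'}\tilde g \in (h_1,\ldots,h_m)$, which is all that the induction actually needs.
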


\begin{proof}
Let $\tilde{g} := \gcd\big\{h_1,h_2,\ldots,h_m\big\}$. Then $\tilde{g} \mid 
h_i$ and $\tilde{g}(p,q) \mid h_i(p,q)$ for each $i$. So $\tilde{g}(p,q)$ 
is a common divisor of $h_1(p,q), h_2(p,q), \ldots, h_m(p,q)$. 

Let $r$ be another common divisor of $h_1(p,q), h_2(p,q), \ldots, h_m(p,q)$. 
We must show that $r \mid \tilde{g}(p,q)$ if $(p,q)$ is superprimitive. 
But we first show something weaker than $r \mid \tilde{g}(p,q)$, namely that there 
exists an $s$ such that 
$$
r \mid q^s \tilde{g}(p,q)\mbox{.}
$$
For that purpose, define $f_i := h_i(y_1,1)$ for each $i$.
Notice that $\tilde{g}$ is homogeneous or zero. From Lemma \ref{hf}, it follows that 
$$
g \mid f_i \Longrightarrow y_2^{\deg g} g(y_1/y_2) \mid h_i
\qquad \mbox{and} \qquad 
y_2^{\deg g} g(y_1/y_2) \mid \tilde{g} \Longrightarrow g \mid \tilde{g}(y_1,1)
$$ 
for each $g \in K[y_1]$ and each $i$. So if we take $g := \gcd\{f_1,f_2,\ldots,f_m\}$, then
we can deduce that $g \mid \tilde{g}(y_1,1)$.

Since $g \mid \tilde{g}(y_1,1)$ and $K[y_1]$ is a principal ideal domain, 
there exist $a_1, a_2, \ldots, \allowbreak a_m \in K[y_1]$ such that 
$$
\tilde{g}(y_1,1) = a_1 f_1 + a_2 f_2 + \cdots + a_m f_m\mbox{.}
$$
Let $s_i := \deg f_i$ for each $i$, and take $s$ such that $s \ge \deg a_i + s_i$ for each $i$. 
From (2) $\Rightarrow$ (1) of Lemma \ref{hf}, it follows that 
$b_i := y_2^{s-s_i} a_i(y_1/y_2) \in K[y_1,y_2]$ for each $i$, and that
\begin{align*}
y_2^s \tilde{g}(y_1/y_2,1) &= y_2^s a_1(y_1/y_2) f_1(y_1/y_2) + \cdots + 
   y_2^s a_m(y_1/y_2) f_m(y_1/y_2) \\
&= y_2^{s_1} b_1(y_1,y_2) f_1(y_1/y_2) + \cdots + y_2^{s_m} b_m(y_1,y_2) f_m(y_1/y_2) \\ 
&= b_1(y_1,y_2) h_1(y_1,y_2) + \cdots + b_m(y_1,y_2) h_m(y_1,y_2)\mbox{.}
\end{align*}
From (1) $\Rightarrow$ (2) of Lemma \ref{hf}, it follows that 
$$
y_2^{s-\deg \tilde{g}} \tilde{g}(y_1,y_2) = y_2^s \tilde{g}(y_1/y_2,1)\mbox{.}
$$
Consequently, 
$$
r \mid \big(b_1(p,q) h_1(p,q) + \cdots + b_m(p,q) b_m(p,q)\big) 
= q^{s-\deg \tilde{g}} \tilde{g}(p,q) \mid q^s \tilde{g}(p,q)\mbox{.}
$$
So there exists an $s$ such that $r \mid q^s \tilde{g}(p,q)$. Similarly,
there exists an $s'$ such that $r \mid p^{s'} \tilde{g}(p,q)$. 

Now suppose that $(p,q)$ is superprimitive and $r \nmid \tilde{g}(p,q)$.
From the superprimitivity of $(p,q)$, we deduce that for any $i,i'$,
$$
\frac{p^{i'}q^{i} \tilde{g}(p,q)}{r} (p,q) \in K[x]^2 
~\Longrightarrow~ \frac{p^{i'}q^{i} \tilde{g}(p,q)}{r} \in K[x]\mbox{,}
$$
which is equivalent to
$$
r \nmid p^{i'}q^{i} \tilde{g}(p,q) ~\Longrightarrow~ \big(r \nmid p^{i'}q^{i+1} \tilde{g}(p,q)
\mbox{ or } r \nmid p^{i'+1}q^{i} \tilde{g}(p,q)\big)\mbox{.}
$$
By induction, it follows that there are $i,i'$ such that $i + i' = s + s' - 1$
and $r \nmid p^{i'}q^{i} \tilde{g}(p,q)$. So either $i \ge s$ or $i' \ge s'$.
This contradicts that $r \mid q^s \tilde{g}(p,q)$ and $r \mid p^{s'} \tilde{g}(p,q)$.
\end{proof}

Using Theorem \ref{ph}, we can determine to what extend the 
generator $p/q$ in L{\"u}roth's theorem is unique.

\begin{proposition} \label{pquniq}
Let $R$ be an integral $K$-domain and $p,q \in R$, such that
$p/q$ is transcendental over $K$. Suppose that $p^{*}\!,q^{*} \in R$.
Then $K(p/q) = K(p^{*}\!/q^{*})$, if and only if there exists a
$T \in \GL_2(K)$ such that
\begin{equation} \label{GL2}
\frac{p^{*}}{q^{*}} = \frac{T_{11}p+T_{12}q}{T_{21}p+T_{22}q}\mbox{,}
\qquad \mbox{where } T = \left( \begin{array}{cc} T_{11} & T_{12} \\
T_{21} & T_{22} \end{array} \right) \in \Mat_2(K)\mbox{,}
\end{equation}
if and only if there exists an $S \in \GL_2(K)$ such that
\begin{equation} \label{GL2i}
\frac{p}{q} = \frac{S_{11}p^{*}+S_{12}q^{*}}{S_{21}p^{*}+S_{22}q^{*}}\mbox{,}
\qquad \mbox{where } S = \left( \begin{array}{cc} S_{11} & S_{12} \\
S_{21} & S_{22} \end{array} \right) \in \Mat_2(K)\mbox{,}
\end{equation}
in which case $ST = TS = cI_2$ for some nonzero $c \in K$.
\end{proposition}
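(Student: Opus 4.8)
The plan is to move everything to the transcendental element $u := p/q$, where the statement is the classical fact that $K(u)$ has exactly the fractional linear maps as $K$-automorphisms, but to run this through the GCD-tools of this section — Corollary~\ref{hfc} and Theorem~\ref{ph}, together with the fact that $K[y_1,y_2]$ is a GCD-domain — so that no coprimality or polynomiality of $p,q$ or of $p^\ast,q^\ast$ is required. Write $L$ for the fraction field of $R$. Note $q^\ast\ne 0$ is implicit, and that if $K(p/q)=K(p^\ast/q^\ast)$ then also $p^\ast\ne 0$ and $p^\ast/q^\ast$ is transcendental over $K$, since otherwise $K(p^\ast/q^\ast)=K\subsetneq K(p/q)$.

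First the easy implications and the relation $ST=TS=cI_2$. Observe that \eqref{GL2} says precisely $p^\ast/q^\ast=\big(T_{11}(p/q)+T_{12}\big)/\big(T_{21}(p/q)+T_{22}\big)$, and likewise for \eqref{GL2i}. If \eqref{GL2} holds with $T$, then — using $q\ne 0$ and $\det T\ne 0$, so that $T_{21}p+T_{22}q\ne 0$ and hence $p^\ast,q^\ast$ are the same nonzero $L$-multiple of $T_{11}p+T_{12}q$ and $T_{21}p+T_{22}q$ — a short computation with the adjugate $S:=\operatorname{adj}(T)$ verifies \eqref{GL2i}; symmetrically \eqref{GL2i} gives \eqref{GL2} with $T=\operatorname{adj}(S)$. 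Hence each of \eqref{GL2}, \eqref{GL2i} yields both $p^\ast/q^\ast\in K(p/q)$ and $p/q\in K(p^\ast/q^\ast)$, so $K(p/q)=K(p^\ast/q^\ast)$. For the final assertion: if \eqref{GL2} holds with $T$ and \eqref{GL2i} with $S$, composing the two fractional linear maps shows that the fractional linear map with matrix $ST$ fixes $p/q$; clearing denominators in $p/q=\big((ST)_{11}(p/q)+(ST)_{12}\big)/\big((ST)_{21}(p/q)+(ST)_{22}\big)$ and using that $p/q$ is transcendental over $K$ forces $(ST)_{21}=(ST)_{12}=0$ and $(ST)_{11}=(ST)_{22}$, i.e. $ST=cI_2$ with $c\ne 0$ because $S,T\in\GL_2(K)$; then $S=cT^{-1}$ gives $TS=cI_2$ as well.

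It remains to prove $K(p/q)=K(p^\ast/q^\ast)\Rightarrow\eqref{GL2}$. Apply Corollary~\ref{hfc} to $H:=(p^\ast,q^\ast)$ with $i=2$ and the pair $(p,q)$ (legitimate since $(q^\ast)^{-1}(p^\ast,q^\ast)=(p^\ast/q^\ast,1)\in K(p/q)^2$): we obtain a primitive homogeneous $h\in K[y_1,y_2]^2$, say of degree $s$, and a $g\in L$ with $(p^\ast,q^\ast)=g\cdot\big(h_1(p,q),h_2(p,q)\big)$; since $q^\ast\ne 0$ this forces $g\ne 0$ and $p^\ast/q^\ast=h_1(p,q)/h_2(p,q)$. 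Symmetrically, applying Corollary~\ref{hfc} to $(p,q)$ with the pair $(p^\ast,q^\ast)$ gives a primitive homogeneous $\phi\in K[y_1,y_2]^2$, say of degree $s'$, and a $g'\in L\setminus\{0\}$ with $(p,q)=g'\cdot\big(\phi_1(p^\ast,q^\ast),\phi_2(p^\ast,q^\ast)\big)$. Substituting the first relation into the second and using homogeneity of $\phi_1,\phi_2$ yields $p=g'g^{s'}\psi_1(p,q)$ and $q=g'g^{s'}\psi_2(p,q)$, where $\psi_j:=\phi_j(h_1,h_2)\in K[y_1,y_2]$ is homogeneous of degree $ss'$; in particular $p/q=\psi_1(p,q)/\psi_2(p,q)$. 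Because $K[y_1,y_2]$ is a GCD-domain, the primitive pair $(h_1,h_2)$ is superprimitive in it, so Theorem~\ref{ph}, applied to $\phi$ and the substitution $y_1=h_1,\,y_2=h_2$, gives $\gcd\{\psi_1,\psi_2\}=\gcd\{\phi_1,\phi_2\}\big|_{y_1=h_1}\big|_{y_2=h_2}=1$.

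To finish, cross-multiply $p/q=\psi_1(p,q)/\psi_2(p,q)$ to see that the homogeneous polynomial $y_1\psi_2-y_2\psi_1$ vanishes at $(p,q)$; since $p/q$ is transcendental over $K$, no nonzero homogeneous polynomial in $y_1,y_2$ vanishes at $(p,q)$ (dehomogenizing at $y_2=1$ would make $p/q$ algebraic over $K$), so $y_1\psi_2=y_2\psi_1$. Together with $\gcd\{\psi_1,\psi_2\}=1$ this forces $\psi_1=c^{-1}y_1$ and $\psi_2=c^{-1}y_2$ for some $c\in K^\ast$, hence $ss'=1$ and in particular $s=1$. Then $h_1=T_{11}y_1+T_{12}y_2$ and $h_2=T_{21}y_1+T_{22}y_2$ for some $T\in\Mat_2(K)$, and $\gcd\{h_1,h_2\}=1$ says exactly that these linear forms are independent, i.e. $T\in\GL_2(K)$; then $p^\ast/q^\ast=h_1(p,q)/h_2(p,q)$ is \eqref{GL2}. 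I expect the one genuinely load-bearing external input to be Theorem~\ref{ph} — it is what upgrades ``$\psi$ is a composite of primitive homogeneous maps'' to ``$\psi$ is primitive'' — while the main nuisance will be the bookkeeping in the two applications of Corollary~\ref{hfc}: tracking the degrees and checking that the various elements ($g$, $g'$, the $h_i(p,q)$, the $\psi_j(p,q)$) are nonzero so that every ratio involved is defined.
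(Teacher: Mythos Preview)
Your proof is correct and follows essentially the same approach as the paper: two applications of Corollary~\ref{hfc} (to $(p^\ast,q^\ast)$ over $(p,q)$ and vice versa), Theorem~\ref{ph} to show the composite $\phi\circ h$ is primitive, transcendence of $p/q$ to lift the relation $y_1\psi_2=y_2\psi_1$ to $K[y_1,y_2]$, and primitivity to force $ss'=1$. The only cosmetic differences are that the paper passes through the dehomogenized $f$ and the substitution $y_1\mapsto p/q$ explicitly before rehomogenizing, whereas you argue directly that a nonzero homogeneous form cannot vanish at $(p,q)$; and the paper obtains $ST=cI_2$ from the explicit construction $T=\jac h$, $S=\jac h^\ast$, while you derive it more generally by composing the two fractional linear maps and invoking transcendence.
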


\begin{proof}
Suppose first that either \eqref{GL2} or \eqref{GL2i} holds. Assume without loss of 
generality that \eqref{GL2} is satisfied. Then
$$
\frac{p^{*}}{q^{*}} = \frac{T_{11}p+T_{12}q}{T_{21}p+T_{22}q} = 
\frac{T_{11}p/q+T_{12}}{T_{21}p/q+T_{22}}\mbox{,}
$$
so $K(p/q) \supseteq K(p^{*}\!/q^{*})$. Take any nonzero $c \in K$
and take $S = cT^{-1}$. Then $ST = TS = cI_2$ and
\begin{align*}
\frac{p}{q} = \frac{cp}{cq} 
&= \frac{S_{11}(T_{11}p+T_{12}q) + S_{12}(T_{21}p+T_{22}q)}{%
S_{21}(T_{11}p+T_{12}q) + S_{22}(T_{21}p+T_{22}q)} \\
&= \frac{S_{11}(T_{11}p+T_{12}q)/(T_{21}p+T_{22}q) + S_{12}}{%
S_{21}(T_{11}p+T_{12}q)/(T_{21}p+T_{22}q) + S_{22}} \\
&= \frac{S_{11}(p^{*}\!/q^{*}) + S_{12}}{%
S_{21}(p^{*}\!/q^{*}) + S_{22}}\mbox{.} 
\end{align*}
So $K(p/q) \subseteq K(p^{*}\!/q^{*})$ as well.

Suppose next that $K(p/q) = K(p^{*}\!/q^{*})$. Take $f$, $g$ and $h$ as in Corollary 
\ref{hfc}, for $H = (p^{*}\!,q^{*})$ and $i = 2$. Then 
$$
(p^{*}\!,q^{*}) = (q^{*})^{-1} \cdot f(p/q) = g \cdot h(p,q)
$$
and $h$ is primitive. Since $K[y_1,y_2]$ is a PSP-domain, we see that $h$ is superprimitive.

Take $f^{*}$, $g^{*}$ and $h^{*}$ in a similar manner as above, for $H^{*} = (p,q)$ and 
$i = 2$. Then
$$
(p,q) = q^{-1} \cdot f^{*}(p^{*}\!/q^{*}) = g^{*} \cdot h^{*}(p^{*}\!,q^{*})
$$
and $h^{*}$ is primitive. From Theorem \ref{ph}, it follows that 
$\gcd\{h^{*}_1\big(h(y_1,y_2)\big)),\allowbreak h^{*}_2\big(h(y_1,y_2)\big)\} = 1$.

Let $s = \deg h$ and $s^{*} = \deg h^{*}$.
Notice that $p/q = h^{*}_1\big(h(p,q)\big))/h^{*}_2\big(h(p,q)\big))$, so
$$
q h^{*}_1\big(h(p,q)\big)) - p h^{*}_2\big(h(p,q)\big)) = 0\mbox{.}
$$
If we divide both sides by $q^{ss^{*}+1}$, then we get
$$
h^{*}_1\big(f(p/q)\big)) - p/q \cdot h^{*}_2\big(f(p/q)\big)) = 0\mbox{.}
$$
Since $p/q$ is transcendental over $K$, we deduce that
$$
h^{*}_1\big(f(y_1/y_2)\big)) - y_1/y_2 \cdot h^{*}_2\big(f(y_1/y_2)\big)) = 0\mbox{.}
$$
If we multiply both sides by $y_2^{ss^{*}+1}$, then we get
$$
y_2 h^{*}_1\big(h(y_1,y_2)\big)) = y_1 \cdot h^{*}_2\big(h(y_1,y_2)\big))\mbox{.}
$$
Since $K[y_1,y_2]$ is a unique factorization domain, we deduce that
$h^{*}(h) = c \cdot (y_1,y_2)$ for some nonzero $c \in K[y_1,y_2]$ of degree 
$ss^{*} - 1$. As $\gcd\{h^{*}_1\big(h(y_1,y_2)\big)), \allowbreak
h^{*}_2\big(h(y_1,y_2)\big)\} = 1$, we see that $c \in K$ and $ss^{*} = 1$. 
So $T,S \in \GL_2(K)$ which satisfy \eqref{GL2} and \eqref{GL2i} exist,
namely $T = \jac_{y_1,y_2} h$ and $S = \jac_{y_1,y_2} h^{*}$. Furthermore,
$ST = c I_2$, so $S = cT^{-1}$.
\end{proof}

The following theorem is the main theorem of this section.

\begin{theorem} \label{hmgrk2}
Assume that $H \in K(x)^m$, such that $\trdeg_K K(tH) \le 2$. Then 
there are $g \in K(x)$ and $h \in K[y_1,y_2]^m$, and a pair $(p,q) \in K[x]^2$
such that
\begin{enumerate}

\item[\upshape (i)] $H = g \cdot h(p,q)$ and $g \ne 0$;

\item[\upshape (ii)] $h$ is either homogeneous and primitive or zero;

\item[\upshape (iii)] $(p,q)$ is primitive, but not constant.

\end{enumerate}
In that case,
\begin{enumerate}

\item[\upshape (iv)] $\trdeg_K K(tH) =  \trdeg_K K\big(th(p,q)\big) = \trdeg_K K(th)$;

\item[\upshape (v)] $H \ne 0 \Longleftrightarrow h(p,q)$ is primitive 
$\Leftrightarrow h$ is primitive $\Longleftrightarrow h \ne 0$;

\item[\upshape (vi)] $\trdeg_K K(tH) \le 1 \Longleftrightarrow h(p,q) \in K^m \Longleftrightarrow h \in K^m$;

\item[\upshape (vii)] If $h \ne 0$, then 
\begin{align*}
\deg h(p,q) &= s \cdot \deg (p,q)\mbox{,} \qquad \mbox{and} \\
\traildeg h(p,q) &= s \cdot \traildeg (p,q)\mbox{,}
\end{align*}
where $s := \deg h = \traildeg h$;

\item[\upshape (viii)] If $h \ne 0$, then $h(p,q)$ is homogeneous, if and only if $(p,q)$ is homogeneous;

\item[\upshape (ix)] If $\deg (p,q)$ is minimal, then $K(p/q)$ is algebraically closed in $K(x)$.

\end{enumerate}
\end{theorem}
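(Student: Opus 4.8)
The plan is to first establish the representation (i)--(iii) and then read off (iv)--(ix) from it, using the results above. If $H = 0$ we may take $g = 1$, $h = 0$, $p = x_1$, $q = 1$, and every item holds with the usual conventions for $0$; so suppose $H \ne 0$ and fix $i$ with $H_i \ne 0$. By Proposition \ref{ttrdeg}(i), $\trdeg_K K(tH_i^{-1}H) = \trdeg_K K(tH) \le 2$, and since the $i$-th component of $tH_i^{-1}H$ is $t$, which is transcendental over $K(x)$, the field $L' := K(H_i^{-1}H_1,\ldots,H_i^{-1}H_m) \subseteq K(x)$ has $\trdeg_K L' \le 1$. If $\trdeg_K L' = 1$, Theorem \ref{luroth} gives $p,q \in K[x]$ with $L' = K(p/q)$ and $\gcd\{p,q\} = 1$; if $\trdeg_K L' = 0$, put $p = x_1$, $q = 1$, so $L' = K \subseteq K(p/q)$. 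In either case $\gcd\{p,q\} = 1$, $p/q$ is transcendental over $K$, and $H_i^{-1}H \in K(p/q)^m$, so Corollary \ref{hfc} (applied with this $p,q$) produces a homogeneous primitive $h \in K[y_1,y_2]^m$ and a nonzero $g \in K(x)$ with $H = g\,h(p,q)$. As $\gcd\{p,q\} = 1$, the pair $(p,q)$ is primitive, and it is non-constant (with $p \ne 0 \ne q$) because $p/q$ is transcendental over $K$ while $K$ is algebraically closed in $K(x)$. This gives (i)--(iii), with $h \ne 0$ exactly when $H \ne 0$.

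For (iv), put $s = \deg h$ and $f = h(y_1,1)$; homogeneity of $h$ gives $h(p,q) = q^s f(p/q)$ and $h = y_2^s f(y_1/y_2)$, so Proposition \ref{ttrdeg}(i) (equally valid with $y_1,y_2$ in place of $x$) yields $\trdeg_K K(tH) = \trdeg_K K(t\,h(p,q)) = \trdeg_K K(t\,f(p/q))$ and $\trdeg_K K(th) = \trdeg_K K(t\,f(y_1/y_2))$. Since $p/q$ is transcendental over $K$ and $t$ over $K(x)$, the sets $\{t,p/q\}$ and $\{t,y_1/y_2\}$ are algebraically independent over $K$, so the $K$-isomorphism $K(t,p/q) \to K(t,y_1/y_2)$ fixing $t$ and sending $p/q \mapsto y_1/y_2$ carries $\{tf_j(p/q)\}_j$ onto $\{tf_j(y_1/y_2)\}_j$, whence $\trdeg_K K(tf(p/q)) = \trdeg_K K(tf(y_1/y_2))$ and (iv) follows. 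For (v): by (ii), $h \ne 0 \Leftrightarrow h$ primitive; since $K[x]$ is a GCD-domain and hence a PSP-domain, the primitive pair $(p,q)$ is superprimitive, so Theorem \ref{ph} (the $h_j$ being homogeneous or zero) gives $\gcd\{h_1(p,q),\ldots,h_m(p,q)\} = \big(\gcd\{h_1,\ldots,h_m\}\big)\big|_{y_1=p,\,y_2=q}$ and therefore $h$ primitive $\Leftrightarrow h(p,q)$ primitive; finally $H \ne 0 \Leftrightarrow h(p,q) \ne 0 \Leftrightarrow h \ne 0$, because $g \ne 0$ and $h(p,q) = q^s f(p/q) \ne 0$ whenever $f \ne 0$.

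The technical heart is the leading/trailing-form analysis behind (vii). Write $d = \deg(p,q)$ and let $P,Q$ be the homogeneous degree-$d$ parts of $p,q$ (not both $0$). Since $p = P + (\text{lower})$, $q = Q + (\text{lower})$ and $h$ is homogeneous of degree $s$, the homogeneous degree-$sd$ part of $h_j(p,q)$ equals $h_j(P,Q)$, so $\deg h(p,q) = sd$ once $h(P,Q) \ne 0$. For primitive $h \ne 0$ (with $s \ge 1$; the case $s = 0$ is trivial), $h(P,Q) \ne 0$: if $Q = 0$ (resp.\ $P = 0$) then $h(P,Q)$ picks out the coefficients of $y_1^s$ (resp.\ of $y_2^s$) in the $h_j$, which cannot all vanish (else $y_2$, resp.\ $y_1$, divides $\gcd\{h_j\}$); if $P = cQ$ for some nonzero $c \in K$ (the only way $P,Q$ can be proportional over $\overline K$) then $h_j(P,Q) = Q^s h_j(c,1)$ and the $h_j(c,1)$ cannot all vanish (else $y_1 - cy_2$ divides $\gcd\{h_j\}$); and if $P,Q$ are non-proportional over $\overline K$, then factoring each $h_j$ into linear forms over $\overline K$ shows no factor of $h_j(P,Q)$ vanishes. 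Hence $\deg h(p,q) = s\deg(p,q)$, and symmetrically (with the trailing forms) $\traildeg h(p,q) = s\,\traildeg(p,q)$, which is (vii). Now (vi): by (iv), $\trdeg_K K(tH) = \trdeg_K K(th)$; if $h \in K^m$ the $th_j$ are scalar multiples of $t$, so $\trdeg_K K(th) \le 1$, while if $h \notin K^m$ then by (ii) $h$ is homogeneous of some degree $s \ge 1$, primitivity provides two nonzero non-proportional components $h_i,h_j$, and $th_i,th_j$ are then algebraically independent over $K$, so $\trdeg_K K(th) \ge 2$; together with (vii) (so $\deg h(p,q) = s\deg(p,q) \ge s$, as $\deg(p,q) \ge 1$) this yields (vi). For (viii): if $p,q$ are homogeneous of one common degree $a$, each $h_j(p,q)$ is a sum of forms of degree $sa$, hence homogeneous; conversely, if $h(p,q)$ is homogeneous then $\deg h(p,q) = \traildeg h(p,q)$, so (vii) forces $\deg(p,q) = \traildeg(p,q)$, which (both $p,q$ nonzero) forces $p$ and $q$ to be homogeneous of one common degree.

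Finally (ix). Assume $\deg(p,q)$ is minimal among all quadruples satisfying (i)--(iii), but $K(p/q)$ is not algebraically closed in $K(x)$; let $F$ be its relative algebraic closure. Then $\trdeg_K F = 1$, so by Theorem \ref{luroth} $F = K(p^*/q^*)$ with $\gcd\{p^*,q^*\} = 1$, and $F = K(p/q)(p^*/q^*)$ with $e := [F:K(p/q)] \ge 2$ finite (as $p^*/q^*$ is algebraic over $K(p/q)$). Since $H_i^{-1}H \in K(p/q)^m \subseteq F^m$, Corollary \ref{hfc} applied with $(p^*,q^*)$ yields a quadruple satisfying (i)--(iii) whose pair-degree $\deg(p^*,q^*)$ is $\ge \deg(p,q)$ by minimality. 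On the other hand, writing $p/q = A(p^*/q^*)/B(p^*/q^*)$ with $A,B \in K[y_1]$, $\gcd\{A,B\} = 1$, $\max(\deg A,\deg B) = e$, and homogenizing $A,B$ to degree $e$, we get $\hat A,\hat B \in K[y_1,y_2]$ with $\gcd\{\hat A,\hat B\} = 1$ (Lemma \ref{hf}) and $p = c\,\hat A(p^*,q^*)$, $q = c\,\hat B(p^*,q^*)$ for some nonzero $c \in K$, using $\gcd\{p,q\} = 1$ together with $\gcd\{\hat A(p^*,q^*),\hat B(p^*,q^*)\} = 1$ (Theorem \ref{ph}). Applying (vii) to the primitive homogeneous pair $(\hat A,\hat B)$ of degree $e$ gives $\deg(p,q) = e\deg(p^*,q^*) \ge 2\deg(p^*,q^*) > \deg(p^*,q^*)$, contradicting minimality; so $K(p/q)$ is algebraically closed in $K(x)$. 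I expect the leading/trailing-form bookkeeping behind (vii) to be the main obstacle — one must verify that no cancellation occurs among the top-degree terms of $h_j(p,q)$ and that it is primitivity of the \emph{tuple} $h$, rather than of the individual $h_j$, that keeps $h(P,Q)$ nonzero — after which (vi), (viii) and (ix) are essentially formal; a secondary subtlety is the handling of the degenerate cases $H = 0$ and $h \in K^m$, where the conventions for $\deg$, $\traildeg$, primitivity and homogeneity of $0$ and of nonzero constants must be used with care.
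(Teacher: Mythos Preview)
Your proof is correct and follows essentially the same route as the paper: L{\"u}roth's theorem plus Corollary \ref{hfc} for the decomposition (i)--(iii), Theorem \ref{ph} for the primitivity of $h(p,q)$ in (v), and a leading/trailing-form count (which the paper isolates as Lemma \ref{degs}) for the degree statements (vii) and (viii). The only noticeable deviations are that in (vi) you argue directly that two non-proportional components of $h$ force $\trdeg_K K(th) \ge 2$ (the paper instead goes back through Proposition \ref{ttrdeg}), and in (ix) you invoke the classical degree formula $[K(u):K(A(u)/B(u))] = \max(\deg A,\deg B)$ in place of the paper's Proposition \ref{pquniq}; both are cosmetic differences rather than a different strategy.
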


\begin{proof}
We first show that a $g \in K(x)$, a $h \in K[y_1,y_2]^m$ and a pair $p,q \in K[x]$
as in (i), (ii), (iii) exist.

If $H = 0$, then we can take $h = 0$ and impose anything we like on $g, p, q$. 
So assume that $H \ne 0$. Then $\trdeg_K(tH) \ge 1$, so there exists an $i$ such that 
$H_i \ne 0$.

Notice that the $i$-th component of $t H_i^{-1} H$ is just $t$. 
Hence it follows from (ii) and  (iii) of Proposition \ref{ttrdeg} that 
$\trdeg_K K(H_i^{-1}H) = \trdeg_K K(tH_i^{-1} \bcdot H) - 1$. Consequently, 
$L := K(H_i^{-1}H)$ satisfies
$$
\trdeg_K L = \trdeg_K K(tH_i^{-1}H) - 1 = \trdeg_K K(tH) - 1\mbox{,}
$$
because of (i) of Proposition \ref{ttrdeg}.
If $\trdeg_K K(tH) = 1$, then $\trdeg_K L = \trdeg_K K(tH) - 1 = 0$, so 
$H_i^{-1}H \in K^m$. Hence we can take $g = H_i$ and $h = H_i^{-1}H$
and impose anything we like on $p$ and $q$ again, if $\trdeg_K K(tH) = 1$.

So assume from now on that $\trdeg_K K(tH) = 2$. 
Then $\trdeg_K L = \trdeg_K \allowbreak K(tH) - 1 = 1$.
From Theorem \ref{luroth}, it follows that there are $p, q \in K[x]$ such 
that $\gcd\{p,q\} = 1$ and $L = K(p/q)$. Furthermore, $L \ne K$, so
$(p,q)$ satisfies (iii).

Take $f$, $g$ and $h$ as in Corollary \ref{hfc}. Then $h \ne 0$ and
(i) and (ii) are satisfied. Since $K[x]$ is a PSP domain, it follows that 
$(p,q)$ is superprimitive.
\begin{enumerate}

\item[(iv)] Let $s := \deg h$. On account of Corollary \ref{hfc}, $f$, $h$ and $s$
satisfy Lemma \ref{hf}. From (i) of Proposition \ref{ttrdeg}, it follows that
\begin{align*}
\trdeg_K K(tH) &= \trdeg_K K\big(th(p,q)\big) \\ 
&= \trdeg_K K\big(tq^{-s}h(p,q)\big)\mbox{.}
\end{align*}
Since $p/q$ is transcendental over $K(t)$, we have
\begin{align*}
\trdeg_K K\big(tq^{-s}h(p,q)\big) &= \trdeg_K K\big(th(p/q,1)\big) \\ 
&= \trdeg_K K\big(th(y_1/y_2,1)\big)\mbox{.}
\end{align*}
From (i) of Proposition \ref{ttrdeg}, it follows that
\begin{align*}
\trdeg_K K\big(th(y_1/y_2,1)\big) &= \trdeg_K K\big(ty_2^s h(y_1/y_2,1)\big) \\ 
&= \trdeg_K K(th)\mbox{.}
\end{align*}
So $\trdeg_K K(tH) = \trdeg_K K\big(th(p,q)\big) = \trdeg_K K(th)$.

\item[(v)] From $\trdeg_K K(tH) = \trdeg_K K(th)$, it follows that 
$H \ne 0 \Longleftrightarrow h \ne 0$.
By assumption, $h$ is primitive $\Longleftrightarrow h \ne 0$. From Theorem
\ref{ph}, it follows that $h(p,q)$ is primitive, if and only if $h$ is primitive.

So $H \ne 0 \Longleftrightarrow h \ne 0 \Longleftrightarrow h$ is primitive
$\Longleftrightarrow h(p,q)$ is primitive.

\item[(vi)] If $h \in K^m$, then $h(p,q) \in K^m$. If $h(p,q) \in K^m$, then
$\trdeg_K K\big(th(p,q)\big) \le 1$, and it follows from (i) of Proposition \ref{ttrdeg}
that $\trdeg_K K(tH) \le 1$. So it remains to show that
$\trdeg_K K(tH) \le 1 \Longrightarrow h \in K^m$.

So assume that $\trdeg_K K(tH) \le 1$. If $H = 0$, then $h = 0 \in K^m$ on account of (v),
so assume that $H \ne 0$. Then (iv) tells us that $\trdeg_K K(t h) = \trdeg_K K(t H) = 1$.
From (ii), (iii) and (i) of Proposition \ref{ttrdeg}, it follows that
$$
\trdeg_K K(h_i^{-1}h) < \trdeg_K K(th_i^{-1}h) = \trdeg_K K(t h) = 1\mbox{,}
$$
so $\trdeg_K K(h_i^{-1}h) = 0$ and $h_i^{-1}h \in K^m$. As $h$ is primitive 
on account of (v), we conclude that $h \in K^m$.

\item[(vii)] Suppose that $h \ne 0$. Then $h$ is primitive by assumption, so
$\gcd\{h_1,h_2,\ldots,\allowbreak h_m\}$ does not have a linear factor. 
Hence the claims follow from Lemma \ref{degs} below.

\item[(viii)] Suppose that $h \ne 0$. Then it follows from (vii) that
$$
\traildeg h(p,q) = \deg h(p,q) \Leftrightarrow \traildeg (p,q) = \deg (p,q)\mbox{.}
$$
So $h(p,q)$ is homogeneous, if and only if $(p,q)$ is homogeneous.

\item[(ix)] From Theorem \ref{luroth}, it follows that there are $p^{*}\!,q^{*} \in K[x]$ 
such that $K(p^{*}\!/q^{*})$ is the algebraic closure of $K(p/q)$. Assume without loss
of generality that $(p^{*}\!,q^{*})$ is primitive. Since $K[x]$ is a PSP-domain,
we see that $(p^{*}\!,q^{*})$ is superprimitive.

Take $f^{*}$, $g^{*}$ and $h^{*}$ as in the proof of Theorem \ref{pquniq}, i.e.\@
as in Corollary \ref{hfc}, but starred, for $H^{*} = (p,q)$ and $i = 2$.
From Proposition \ref{ph}, it follows that $h^{*}(p^{*}\!,q^{*})$
is primitive. Since $h^{*}(p^{*}\!,q^{*})$ is dependent
over $K(x)$ of $(p,q)$, it follows that $(p,q)$ is dependent over $K[x]$ of
$h^{*}(p^{*}\!,q^{*})$.

Since $\deg (p,q)$ is minimal, $(p,q)$ is dependent over $K$ of
$h^{*}(p^{*}\!,q^{*})$. Furthermore, it follows from (vii) that $\deg h^{*} = 1$ 
and $\deg (p,q) = \deg (p^{*}\!,q^{*})$. Consequently, $\deg (p^{*}\!,q^{*})$
is minimal. Since $p/q \notin K$ and $K$ is algebraically closed in $K(x)$, it 
follows from Theorem \ref{pquniq} that $K(p/q) = K(p^{*}\!/q^{*})$. So $K(p/q)$
is algebraically closed in $K(x)$. \qedhere

\end{enumerate}
\end{proof}

\begin{lemma} \label{degs}
Suppose that $h \in K[y_1,y_2]^m$ is homogeneous and $p, q \in K[x]$, not both zero.
If $\gcd\{h_1,h_2,\ldots,h_m\}$ does not have a linear factor, then $h(p,q) \ne 0$, 
and
\begin{align*}
\deg h(p,q) &= s \cdot \deg (p,q)\mbox{,} \qquad \mbox{and} \\
\traildeg h(p,q) &= s \cdot \traildeg (p,q)\mbox{,}
\end{align*}
where $s := \deg h = \traildeg h$.
\end{lemma}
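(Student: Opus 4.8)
The plan is to reduce everything to the homogeneity of $h$ together with the assumption that $\gcd\{h_1,\ldots,h_m\}$ has no linear factor. Write $s := \deg h$; since $h$ is homogeneous, $\traildeg h = \deg h = s$. Set $d := \deg(p,q)$ and $e := \traildeg(p,q)$. Because $h_i$ is homogeneous of degree $s$ (or zero), each $h_i(p,q)$ is a sum of terms of the form $c\, p^{a} q^{b}$ with $a+b = s$, so $\deg h_i(p,q) \le s\,d$ and $\traildeg h_i(p,q) \ge s\,e$ whenever $h_i(p,q)\ne 0$. Thus the inequalities $\deg h(p,q) \le s\,d$ and $\traildeg h(p,q) \ge s\,e$ are immediate; the content of the lemma is the reverse inequalities, and in particular that $h(p,q)\ne 0$.

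First I would dispose of the degenerate cases. If one of $p,q$ is zero, say $q=0$, then $h(p,0) = (p^s h_1(1,0),\ldots,p^s h_m(1,0))$ up to reindexing, and the vector $(h_1(1,0),\ldots,h_m(1,0))$ is nonzero — otherwise $y_2 \mid h_i$ for all $i$, so $y_2$ would be a linear factor of the gcd, contrary to hypothesis; then $\deg h(p,0) = \traildeg h(p,0) = s\deg p = s\,d = s\,e$ and we are done. So assume $p,q$ are both nonzero. Next, by dividing out $g_0 := \gcd\{p,q\}$ we may assume $\gcd\{p,q\}=1$: indeed, writing $p = g_0 p'$, $q = g_0 q'$ with $(p',q')$ primitive, homogeneity of $h$ gives $h(p,q) = g_0^{\,s}\, h(p',q')$, which shifts both $\deg$ and $\traildeg$ by $s\deg g_0$ on the left and shifts $d,e$ by $\deg g_0$ on the right, so the claimed equalities for $(p,q)$ follow from those for $(p',q')$. (Here I use that $\deg(p,q) = \deg g_0 + \deg(p',q')$ and likewise for $\traildeg$, which holds because these are coordinatewise maxima/minima.)

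So now assume $\gcd\{p,q\}=1$ and both are nonzero. The key step is to pass to one variable. Introduce a fresh auxiliary variable — concretely, work with $p/q$, which is transcendental over $K$ since $\gcd\{p,q\}=1$ and $p,q$ are not both constant (if both were constant the lemma is trivial after the reductions above, as $d=e=0$). Set $f := h(y_1,1) \in K[y_1]^m$; by Lemma \ref{hf}, $f$ is nonzero, $\deg f \le s$, and $h = y_2^{\,s} f(y_1/y_2)$, hence $h(p,q) = q^{\,s} f(p/q)$. Now $\deg f = s$: otherwise $y_2$ would divide $h = y_2^s f(y_1/y_2)$ after clearing — more precisely $\deg f < s$ forces $y_2 \mid h_i$ for every $i$, again giving $y_2$ as a linear factor of the gcd, a contradiction. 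By Proposition \ref{pf} applied over $K(x)$ with $p/q$ substituted for $y_1$, $\gcd\{f_1(p/q),\ldots,f_m(p/q)\}$ equals $\bigl(\gcd\{f_1,\ldots,f_m\}\bigr)\big|_{y_1=p/q}$; and $\gcd\{f_1,\ldots,f_m\}$ is a nonzero constant, since a nonconstant common divisor of the $f_i$ in $K[y_1]$ would correspond, via Lemma \ref{hf}, to a common divisor of the $h_i$ of the form $y_2^{\deg g} g(y_1/y_2)$, which contains a linear factor as soon as $\deg g \ge 1$ — contradiction. Therefore $f(p/q)$ is primitive over $K(x)$, in particular $f(p/q) \ne 0$, so $h(p,q) = q^s f(p/q) \ne 0$.

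It remains to compute the degrees. Write $f_i = \sum_{k} c_{i,k} y_1^{\,k}$ and let $\delta := \deg f = s$ and $\varepsilon := \traildeg\{f_1,\ldots,f_m\}$ (the smallest $k$ occurring in any $f_i$); note $\varepsilon = 0$, since $y_1 \mid f_i$ for all $i$ would mean $y_1$ is a linear factor of $\gcd\{f_1,\ldots,f_m\}$, impossible as that gcd is a unit. Then $h_i(p,q) = q^{\,s} f_i(p/q) = \sum_k c_{i,k}\, p^{\,k} q^{\,s-k}$. Taking the maximum over $i$ and over monomials: the top-degree contribution comes from $k=\delta=s$ in the $i$ achieving it, giving a term $c\, p^{\,s}$ with $\deg = s\,d$ — and I must check this top term does not cancel across the sum over $i$, which it cannot, since for a single $i$ the term $c_{i,s} p^s$ has a well-defined degree $s\,d$ and for that same $i$ all other terms $c_{i,k} p^k q^{s-k}$ with $k<s$ have strictly smaller degree $k\,d + (s-k)\deg q \le k\,d + (s-k)d = s\,d$ with equality only if $\deg q = d$, in which case I instead look at the homogeneous-of-top-degree parts. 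This is the one place where a little care is needed: the clean argument is to pass to top-degree forms. Let $\bar p, \bar q$ denote the leading forms of $p,q$ (the homogeneous parts of degrees $\deg p$, $\deg q$); by the theory of the degree function on $K[x]$, $\deg h_i(p,q) = s\,d$ for the relevant $i$ will follow once I know $h(\bar p', \bar q') \ne 0$, where $\bar p', \bar q'$ are the parts of $p,q$ of degree exactly $d = \max(\deg p,\deg q)$ (one of which may be zero). But $(\bar p', \bar q')$ has gcd a unit and is not both zero, so this is exactly the $q=0$-type or $\gcd=1$-type situation already handled, or reduces to it; hence $h(\bar p', \bar q') \ne 0$ and $\deg h(p,q) = s\,d$. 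The trailing degree is entirely symmetric: replace "leading" by "trailing" throughout, use $\varepsilon = 0$ in place of $\delta = s$, and use that the trailing forms $\underline p, \underline q$ of $p,q$ of degree $e = \min(\traildeg p, \traildeg q)$ are not both zero with gcd a unit, so $h$ evaluated at them is nonzero, giving $\traildeg h(p,q) = s\,e$.

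The main obstacle, and the only genuinely non-formal point, is the cancellation issue in the last paragraph: a priori the top-degree terms of the various $h_i(p,q)$ could conspire to cancel, or the leading form of $h(p,q)$ could vanish even though $h(p,q)$ does not. The way around it is the reduction to leading/trailing forms of $p$ and $q$ — an application of the multiplicativity of leading forms in a polynomial ring together with homogeneity of $h$ — which converts the statement about $\deg$ and $\traildeg$ into a statement about nonvanishing of $h$ at a primitive (or zero-containing) homogeneous pair, and that nonvanishing is precisely what the no-linear-factor hypothesis on $\gcd\{h_1,\ldots,h_m\}$ buys us, exactly as in the $q = 0$ case at the start.
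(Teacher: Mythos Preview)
Your overall strategy is sound: reduce to leading and trailing forms and show $h$ does not vanish there. But there is a genuine gap at the crucial step. You assert that ``$(\bar p', \bar q')$ has gcd a unit'', where $\bar p', \bar q'$ are the degree-$d$ parts of $p,q$. This is false even after your reduction to $\gcd\{p,q\}=1$: take $p = x_1 + 1$, $q = x_1$, so $\gcd\{p,q\}=1$ but $\bar p' = \bar q' = x_1$. Your phrase ``or reduces to it'' does not repair this, because after dividing out the gcd of $(\bar p',\bar q')$ you may land on a nonzero pair of \emph{constants} $(a,b)\in K^2$, a case your earlier nonvanishing argument (which used that $p/q$ is transcendental over $K$) does not cover. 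That constant case is precisely where the no-linear-factor hypothesis bites: $h(a,b)=0$ with $(a,b)\ne(0,0)$ would force $b y_1 - a y_2$ to divide every $h_i$. So the fix is simply to argue by cases on $(\bar p',\bar q')$: if one is zero, use $y_1\nmid\gcd\{h_i\}$ or $y_2\nmid\gcd\{h_i\}$; if both are nonzero, either $\bar p'/\bar q'\in K$ and the no-linear-factor hypothesis gives $h(\bar p',\bar q')\ne 0$ directly, or $\bar p'/\bar q'$ is transcendental over $K$ and then $h_i(\bar p',\bar q') = (\bar q')^s f_i(\bar p'/\bar q')\ne 0$ for any $i$ with $f_i\ne 0$.

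A smaller error: you claim $\gcd\{f_1,\ldots,f_m\}$ is a nonzero constant because a nonconstant common divisor $g\in K[y_1]$ would yield a common divisor $y_2^{\deg g}g(y_1/y_2)$ of the $h_i$ ``which contains a linear factor''. That is false unless $g$ has a root in $K$; over $K=\R$ one could have $\gcd\{h_i\}=y_1^2+y_2^2$ and hence $\gcd\{f_i\}=y_1^2+1$. This does not actually break your argument (you only need $f\ne 0$, and $\varepsilon=0$ and $\delta=s$ follow directly from $y_1\nmid\gcd\{h_i\}$ and $y_2\nmid\gcd\{h_i\}$), but the stated reason is wrong.

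For comparison, the paper's proof avoids both the gcd reduction and the recursion entirely. It first performs a linear change $(p,q)\mapsto(p,\lambda p+\mu q)$ (with a matching change in $h$, which preserves the no-linear-factor hypothesis) so that either $q=0$ or the leading homogeneous parts of $p$ and $q$ are linearly independent over $K$; similarly for trailing parts. After this normalization, one picks a single index $i$ with $y_2\nmid h_i$ and shows $h_i(\bar p,\bar q)\ne 0$ directly: if it vanished with $\bar q\ne 0$ then $\bar p/\bar q$ would be algebraic over $K$, hence in $K$ (since $K$ is algebraically closed in $K(x)$), contradicting linear independence. This sidesteps the coprimality-of-leading-forms issue altogether.
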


\begin{proof}
Assume without loss of generality that $\deg p \ge \deg q$. Then $p \ne 0$.
By replacing $q$ by a linear combination of $p$ and $q$ and adapting 
$h$ accordingly, we can ensure that either $q = 0$ or that the leading homogeneous 
parts (homogeneous parts of maximum degree) of $p$ and $q$ are linearly independent 
over $K$. 
By subsequently replacing $p$ by a linear combination of $p$ and $q$, and adapting 
$h$ accordingly, we can ensure that either $q = 0$ or that the trailing homogeneous 
parts (homogeneous parts of minimum degree) of $p$ and $q$ are linearly independent 
over $K$ as well.

We first show that $\deg h(p,q) = s \cdot \deg (p,q)$.
Let $(\bar{p},\bar{q})$ be the leading homogeneous part of $(p,q)$ 
(so $\deg (\bar{p},\bar{q}) = \deg (p,q)$).
Since $h$ is homogeneous, it follows that $h(\bar{p},\bar{q})$ is either the leading 
homogeneous part of $h(p,q)$ or zero. By assumption, the exists an $i$ such that
$y_2 \nmid h_i$. Furthermore, if $h_i(\bar{p},\bar{q}) \ne 0$, then 
$$
\deg h(p,q) = \deg h_i \cdot \deg(\bar{p},\bar{q}) = s \cdot \deg (p,q)\mbox{.}
$$
Therefore, assume that $h_i(\bar{p},\bar{q}) = 0$. We will derive a contradiction,
by which we additionally obtain $h(p,q) \ne 0$.

If $\bar{q} = 0$, then $\bar{p} = 0$ as well, because $h_i(\bar{p},\bar{q}) = 0$
and $y_2 \nmid h_i$, which is a contradiction. So $\bar{q} \ne 0$. 
As $\deg p \ge \deg q$, $\bar{p} \ne 0$ as well. Since $h_i$ is
homogeneous, we deduce from $h_i(\bar{p},\bar{q}) = 0$ that 
$h_i(\bar{p}/\bar{q},1) = 0$. So $\bar{p}/\bar{q}$ is algebraic over $K$. 
As $K$ is algebraically closed in $K(x)$, we see that $\bar{p}/\bar{q} \in K$. 
This contradicts the fact that that the leading homogeneous parts of $p$ and $q$ 
are linearly independent over $K$.

We next show that $\traildeg h(p,q) = \traildeg h \cdot \traildeg (p,q)$. 
Let $(\tilde{p},\tilde{q})$ be the trailing homogeneous part of $(p,q)$
(so $\traildeg (\tilde{p},\tilde{q}) = \traildeg (p,q)$).
Since $h$ is homogeneous, it follows that $h(\tilde{p},\tilde{q})$ is the trailing 
homogeneous part of $h(p,q)$ or zero. If $\traildeg p \le \traildeg q$,
then take $j$ such that $y_2 \nmid h_j$. Otherwise, take $j$ such that $y_1 \nmid h_j$. 
If $h_j(\tilde{p},\tilde{q}) \ne 0$, then 
$$
\traildeg h(p,q) = \deg h_j \cdot \deg(\tilde{p},\tilde{q}) = s \cdot \traildeg (p,q)\mbox{.}
$$
Therefore, assume that $h_j(\tilde{p},\tilde{q}) = 0$. Then we can obtain a 
contradiction in a similar manner as in the case $h_i(\bar{p},\bar{q}) = 0$ above.
\end{proof}

\section{Some further results that follow from L{\"u}roth's theorem}

In Theorem 4 in \cite[\S~1.2]{MR1770638}, it is shown that $K(p/q)$ is generated 
by a polynomial if $K(p/q)$ contains a nonconstant polynomial.
Inspection of the proof of this theorem reveals that even $K(p/q) \in \{K(p),K(q)\}$
if $K(p/q)$ contains a nonconstant polynomial and $\gcd\{p,q\} = 1$. This result
can also be obtained from (2) $\Rightarrow$ (3), (4) of Theorem \ref{enother} below.
So Theorem 4 in \cite[\S~1.2]{MR1770638} can be seen as a special case of
Theorem \ref{enother} below.

\begin{theorem} \label{enother}
Let $p, q \in K[x]$ which are not both constant, such that $\gcd\{p,q\} = 1$.
Then for
\begin{enumerate}

\item[\upshape(1)] $p$ and $q$ are algebraically dependent over $K$ and
$K(p/q)$ is algebraically closed in $K(x)$;

\item[\upshape(2)] $K(p/q) \cap K[x] \ne K$;

\item[\upshape(3)]
there are $\lambda, \mu \in K$ such that $\lambda p + \mu q = 1$;

\item[\upshape(4)] $K(p/q) = K(p,q)$;

\end{enumerate}
we have $(1) \Rightarrow (2) \Rightarrow (3) \Rightarrow (4)$.
\end{theorem}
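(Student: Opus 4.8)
The plan is to prove the three implications separately, with essentially all the work going into $(2)\Rightarrow(3)$. First I would dispose of the case that one of $p,q$ is constant: if, say, $q\in K$, then in fact $q\in K^{*}$ (were $q=0$, then $\gcd\{p,q\}=p$ would be a unit, making both $p$ and $q$ constant), so $K(p/q)=K(p)=K(p,q)$ with $p$ nonconstant; then $p\in K(p/q)\cap K[x]\setminus K$, the pair $\lambda=0,\mu=q^{-1}$ witnesses $(3)$, and $(4)$ is immediate, so all of $(2),(3),(4)$ hold and there is nothing to check (the case $p\in K$ is symmetric). So assume from now on that $p$ and $q$ are both nonconstant. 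Since $K$ is algebraically closed in $K(x)$, $p$ and $q$ are then transcendental over $K$, and $p/q\notin K$ as well: if $p/q=c\in K$ then $q\mid p$ in $K[x]$, contradicting $\gcd\{p,q\}=1$ together with $q\notin K$. Hence $p/q$ is transcendental over $K$ too.

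For $(1)\Rightarrow(2)$: the algebraic dependence of $p,q$ over $K$ together with $\trdeg_{K}K(p)=1$ forces $\trdeg_{K}K(p,q)=1=\trdeg_{K}K(p/q)$, so $K(p,q)$ is algebraic over $K(p/q)$; in particular $p$ is algebraic over $K(p/q)$, and since $K(p/q)$ is algebraically closed in $K(x)$ and $p\in K[x]$, we conclude $p\in K(p/q)\cap K[x]$ with $p\notin K$, which is $(2)$. For $(3)\Rightarrow(4)$: writing $u:=p/q$, the relation $\lambda p+\mu q=1$ becomes $q(\lambda u+\mu)=1$, and since $\lambda u+\mu=q^{-1}\ne 0$ we get $q=(\lambda u+\mu)^{-1}\in K(u)=K(p/q)$, hence also $p=uq\in K(p/q)$; thus $K(p,q)\subseteq K(p/q)$, and the reverse inclusion is trivial.

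It remains to prove $(2)\Rightarrow(3)$. Pick $r\in K(p/q)\cap K[x]$ with $r\notin K$. Applying Corollary \ref{hfc} to $H=(r,1)$ with $i=2$ (note $(r,1)\in K(p/q)^{2}$) yields a $g\in K(x)$ and a homogeneous primitive $h\in K[y_{1},y_{2}]^{2}$ with $(r,1)=g\cdot h(p,q)$; thus $g\,h_{2}(p,q)=1$, so $h_{2}(p,q)\ne 0$, and $r=h_{1}(p,q)/h_{2}(p,q)$, whence $h_{2}(p,q)\mid h_{1}(p,q)$ in $K[x]$. Since $\gcd\{p,q\}=1$, the pair $(p,q)$ is primitive, hence superprimitive in the GCD-domain $K[x]$, so Theorem \ref{ph} gives $\gcd\{h_{1}(p,q),h_{2}(p,q)\}=\bigl(\gcd\{h_{1},h_{2}\}\bigr)\big|_{y_{1}=p}\big|_{y_{2}=q}\in K^{*}$ (as $h$ is primitive), and combined with $h_{2}(p,q)\mid h_{1}(p,q)$ this forces $h_{2}(p,q)\in K^{*}$. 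Also $s:=\deg h\ge 1$, since $s=0$ would give $h\in K^{2}$ and $r\in K$; so $h_{2}$ is homogeneous of degree $s\ge 1$.

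Finally, $h_{2}$ must have a homogeneous linear factor: otherwise Lemma \ref{degs} applied to the one-tuple $(h_{2})$ would yield $\deg h_{2}(p,q)=s\cdot\deg(p,q)\ge 1$, contradicting $h_{2}(p,q)\in K^{*}$. Write such a factor as $\alpha y_{1}-\beta y_{2}$ with $\alpha,\beta\in K$; then $\alpha p-\beta q$ divides the unit $h_{2}(p,q)$ and hence lies in $K^{*}$, and neither $\alpha$ nor $\beta$ is zero (that would make $p$ or $q$ constant). Therefore $p=\gamma q+\delta$ with $\gamma:=\beta/\alpha\in K$ and $\delta:=(\alpha p-\beta q)/\alpha\in K^{*}$, so $\delta^{-1}p-\gamma\delta^{-1}q=1$, which is $(3)$. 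The genuinely delicate step is precisely this last implication: one needs Theorem \ref{ph} — hence the superprimitivity of $(p,q)$ coming from $\gcd\{p,q\}=1$ — to pin down the denominator $h_{2}(p,q)$ as a unit, after which Lemma \ref{degs} extracts the linear factor and the affine relation drops out; the other two implications are essentially formal.
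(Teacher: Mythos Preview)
Your proof is correct. The core implication $(2)\Rightarrow(3)$ follows essentially the paper's route: homogenize, invoke Theorem~\ref{ph} via superprimitivity of $(p,q)$ to force $h_2(p,q)\in K^{*}$, then extract a linear factor with Lemma~\ref{degs}. The only cosmetic difference is that you package the homogenization through Corollary~\ref{hfc} applied to $(r,1)$, whereas the paper writes $r=f_1(p/q)/f_2(p/q)$ and homogenizes by hand via Lemma~\ref{hf}; and you argue by contradiction on $h_2$ having no linear factor, whereas the paper first passes to an irreducible factor $h_3$ of $h_2$ and then applies Lemma~\ref{degs} to that.

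Where you genuinely diverge is in the two bookend implications. For $(1)\Rightarrow(2)$ the paper invokes the full structure Theorem~\ref{hmgrk2} on the triple $(p,q,1)$ to produce an auxiliary pair $(p^{*},q^{*})$ with $p,q\in K(p^{*}/q^{*})$; your transcendence-degree count ($\trdeg_K K(p,q)=1=\trdeg_K K(p/q)$, hence $p$ is algebraic over $K(p/q)$ and therefore lies in it) is markedly more elementary and avoids that machinery entirely. For $(3)\Rightarrow(4)$ the paper appeals to Proposition~\ref{pquniq} (the M\"obius-uniqueness of L\"uroth generators), while your direct computation $q=(\lambda u+\mu)^{-1}\in K(u)$ is again shorter and self-contained. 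Your arguments buy simplicity and independence from the heavier results of Section~\ref{bertinirk2}; the paper's versions have the virtue of illustrating those tools but are not logically needed here.
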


\begin{proof}
Actually, we have $(1) \Rightarrow (2) \Leftrightarrow (3) \Leftrightarrow (4)$. 
But $(4) \Rightarrow (2)$ is trivial. Hence $(1) \Rightarrow (2) \Rightarrow (3)
\Rightarrow (4)$ suffices.
\begin{description}

\item[(1) \imp (2)]
Suppose that $p$ and $q$ are algebraically dependent over $K$. Then
$\trdeg_K K(tp,tq,t) = 2$. From Theorem \ref{hmgrk2}, it follows that 
there exist a $g^{*} \in K(x)$, a homogeneous $h^{*} \in K[y_1,y_2]^3$ and a pair 
$(p^{*}\!,q^{*}) \in K[x]^2$, such that
$$
(p,q,1) = g^{*} h^{*}(p^{*}\!,q^{*})\mbox{.}
$$
Consequently,
\begin{align*}
(p/q,p,q) &= \Big(
\frac{h_1^{*}(p^{*}\!,q^{*})}{h_2^{*}(p^{*}\!,q^{*})},
\frac{h_1^{*}(p^{*}\!,q^{*})}{h_3^{*}(p^{*}\!,q^{*})},
\frac{h_2^{*}(p^{*}\!,q^{*})}{h_3^{*}(p^{*}\!,q^{*})}
\Big) \\ &= \Big(
\frac{h_1^{*}(p^{*}\!/q^{*},1)}{h_2^{*}(p^{*}\!/q^{*},1)},
\frac{h_1^{*}(p^{*}\!/q^{*},1)}{h_3^{*}(p^{*}\!/q^{*},1)},
\frac{h_2^{*}(p^{*}\!/q^{*},1)}{h_3^{*}(p^{*}\!/q^{*},1)}
\Big) \in K(p^{*}\!/q^{*})^3\mbox{.}
\end{align*}
Assume that $K(p/q)$ is algebraically closed in $K(x)$.
From $p/q \in K(p^{*}\!/q^{*})$, it follows that $p^{*}\!/q^{*}$ is algebraic 
over $K(p/q)$. Consequently,
$$
p,q \in K(p^{*}\!/q^{*}) = K(p/q)\mbox{.}
$$
Hence $K(p,q) \cap K[x] \ne K$.

\item[(2) \imp (3)] Suppose that $K(p/q)$ contains a nonconstant
polynomial $r$. Then there exist $f_1,f_2 \in K[y_1]$ such that 
$\gcd\{f_1,f_2\} = 1$ and $r = f_1(p/q)/\allowbreak f_2(p/q)$. Let $s := 
\max\{\deg f_1, \deg f_2\}$. 

From (2) $\Rightarrow$ (1) of Lemma \ref{hf}, it follows that there exist
$h_1, h_2 \in K[y_1,y_2]$, both homogeneous of degree $s$, such that
$$
r = \frac{h_1(p/q,1)}{h_2(p/q,1)} =  \frac{q^s h_1(p/q,1)}{q^s h_2(p/q,1)} 
  = \frac{h_1(p,q)}{h_2(p,q)}\mbox{.}
$$
Furthermore, we deduce from Lemma \ref{hf} that $\gcd\{h_1,h_2\} = 1$.
On account of Theorem \ref{ph}, $\gcd\{h_1(p,q),h_2(p,q)\} = 1$. As 
$r \in K[x]$, we conclude that $h_2(p,q)$ is a unit in $K[x]$, i.e.
$h_2(p,q) \in K$. 

If we take for $h_3$ an irreducible factor of $h_2$, then 
$h_3(p,q) \in K$ as well. Assume without loss of generality that 
$h_3(p,q) = 1$. From Lemma \ref{degs}, it follows that $h_3$ has a linear factor,
so $h_3$ is homogeneous of degree $1$. Say that $h_3 = \lambda y_1 + \mu y_2$,
where $\lambda, \mu \in K$. Then $1 = \lambda p + \mu q$.

\item[(3) \imp (4)] Suppose that there are $\lambda, \mu \in K$ such that 
$1 = \lambda p + \mu q$. Take $r^{*} \in \{p,q\}$ such that $r^{*} \notin K$. 
From Proposition \ref{pquniq}, it follows that $K(p/q) = K(r^{*}/1) = K(p,q)$. 
\qedhere

\end{description}
\end{proof}

As a corollary of Theorem 4 in \cite[\S~1.2]{MR1770638} or (2) $\Rightarrow$ (3), (4) 
of Theorem \ref{enother}, we have the following result.

\begin{corollary} \label{gordan}
Let $R$ be a $K$-subalgebra of $K[x]$, with fraction field $L$, such that 
$\trdeg_{K} L = 1$. Then the following holds.
\begin{enumerate}

\item[\upshape(i)] There exists a $p \in K[x]$ such that $L = K(p)$.

\item[\upshape(ii)] For any $p \in K[x]$ such that $L = K(p)$, 
$$
R \subseteq K[p] = K(p) \cap K[x]\mbox{,}
$$
and $K[p]$ is the integral closure of $R$ (in its fraction field $L$). 

\end{enumerate}
\end{corollary}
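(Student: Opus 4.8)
The plan is to reduce everything to Theorem~\ref{enother} applied to a single pair $p, q \in K[x]$ with $\gcd\{p,q\} = 1$ and $L = K(p/q)$, which exists by L\"uroth's theorem (Theorem~\ref{luroth}). First I would dispose of part~(i): since $R \subseteq K[x]$, the field $L$ contains a nonconstant polynomial (any nonconstant element of $R$ will do), so by the remark preceding Theorem~\ref{enother} — i.e.\ by $(2)\Rightarrow(3),(4)$ of that theorem, or Theorem~4 in \cite[\S~1.2]{MR1770638} — we get $K(p/q) \in \{K(p), K(q)\}$, hence $L = K(p)$ for a suitable $p \in K[x]$. Note $p \notin K$ since $\trdeg_K L = 1$.

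For part~(ii), fix any $p \in K[x]$ with $L = K(p)$. The inclusion $R \subseteq K(p) \cap K[x]$ is immediate from $R \subseteq L$ and $R \subseteq K[x]$. The equality $K[p] = K(p) \cap K[x]$ is the heart of the matter. The inclusion $K[p] \subseteq K(p) \cap K[x]$ is trivial. For the reverse, take $r \in K(p) \cap K[x]$ and write $r = a(p)/b(p)$ with $a, b \in K[y_1]$, $\gcd\{a,b\} = 1$. Then $b(p) \mid a(p)$ in $K[x]$; applying Proposition~\ref{pf} (with $R = K[x]$, $m = 2$, $f = (a,b)$) gives $\gcd\{a(p), b(p)\} = (\gcd\{a,b\})|_{y_1 = p} = 1$, so $b(p)$ is a unit in $K[x]$, i.e.\ $b(p) \in K^{*}$. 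Since $p$ is transcendental over $K$, $b(p) \in K^{*}$ forces $b \in K^{*}$, whence $r = a(p)/b(p) = b^{-1} a(p) \in K[p]$. This proves $K(p) \cap K[x] = K[p]$, and combined with the previous paragraph, $R \subseteq K[p]$.

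It remains to see that $K[p]$ is the integral closure of $R$ in $L$. Since $p$ is transcendental over $K$, the ring $K[p]$ is a polynomial ring in one variable over a field, hence a PID, hence integrally closed in its fraction field $L = K(p)$. So it suffices to show $K[p]$ is integral over $R$, i.e.\ that $p$ itself is integral over $R$. For this, pick any nonconstant $u \in R$ and write $u = c(p)/d(p)$ with $c, d \in K[y_1]$, $\gcd\{c,d\} = 1$; as above $d(p) \in K^{*}$, so after absorbing the constant we may take $u = c(p)$ with $c \in K[y_1]$ nonconstant, say $\deg c = N \ge 1$ with leading coefficient $c_N$. Then $c_N^{-1}(c(p) - u) = 0$ exhibits $p$ as a root of the monic polynomial $c_N^{-1}(c(y_1) - u) \in R[y_1]$ of degree $N$, so $p$ is integral over $R$. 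Hence every element of $K[p]$ is integral over $R$, and since $R \subseteq K[p] \subseteq L$ with $K[p]$ integrally closed in $L$, $K[p]$ is exactly the integral closure of $R$ in $L$.

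The only step requiring care is the passage from "$b(p)$ divides $a(p)$ in $K[x]$" to "$b$ is constant": this is precisely what Proposition~\ref{pf} buys us, so there is no real obstacle — the argument is essentially an assembly of already-proved facts. One should double-check that in part~(i) the hypothesis "$R \ne K$" (built into $\trdeg_K L = 1$) is what guarantees a nonconstant polynomial lies in $L$, and that the case distinction $K(p/q) \in \{K(p), K(q)\}$ with $\gcd\{p,q\} = 1$ indeed yields a genuine polynomial generator; both are immediate.
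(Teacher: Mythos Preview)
Your proof is correct and follows essentially the same route as the paper's: L\"uroth plus $(2)\Rightarrow(3),(4)$ of Theorem~\ref{enother} for part~(i), then the identity $K[p] = K(p)\cap K[x]$ (which the paper records separately as Lemma~\ref{rkp}, with the same proof via Proposition~\ref{pf} that you give inline), and finally integrality of $p$ over $R$ via a nonconstant element of $R$. The only cosmetic difference is that you argue $K[p]$ is integrally closed in $L$ because it is a PID, whereas the paper deduces it from $K[p] = L \cap K[x]$ and the integral closedness of $K[x]$ in $K(x)$; both are standard.
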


\begin{proof} 
From Theorem \ref{luroth}, it follows that there exist $p,q \in K[x]$ 
such that $L = K(p/q)$. Since $\trdeg_{K} L = 1$, it follows that 
$K \subsetneq R \subseteq K[x]$, so $L \supseteq R$ contains a nonconstant 
polynomial. From (2) $\Rightarrow$ (3), (4) of Theorem \ref{enother}, 
we deduce that $L \in \{K(p),K(q)\}$. So we can choose $q = 1$ above.

From Lemma \ref{rkp} below, it follows that
$$
K[p] = K(p) \cap K[x] = L \cap K[x] \supseteq R\mbox{.}
$$
Since $R$ contains a nonconstant polynomial, it follows that $p$ is integral
over $R$. From $K[p] = L \cap K[x]$ and the fact that $K[x]$ is integrally closed
in $K(x)$, we deduce that $K[p]$ is integrally closed in $L$.
So $K[p]$ is the integral closure of $R$ in $L$.
\end{proof}

The ring $R = K[x_1^2,x_1^3]$ is not of the form $K[p]$ for any
$p \in K[x]$, which corresponds to the fact that $R$ is not integrally closed.

In \cite[Lm.~2]{MR0913158}, the author deduces from Theorem 4 in 
\cite[\S~1.2]{MR1770638} that $R \subseteq K[p]$ for some $p \in K[x]$ 
in the situation of Corollary \ref{gordan}, which is a bit less specific.
Zaks in \cite{MR0280471} and Eakin in \cite{MR0289498} prove that $R = K[p]$
for some $p \in K[x]$ if $R$ is integrally closed (in $L$) in the situation of 
Corollary \ref{gordan}, which is almost as specific as Corollary \ref{gordan} itself.
But the proofs of Zaks and Eakin differ from that of \cite[Lm.~2]{MR0913158} and 
Corollary \ref{gordan}.

\begin{lemma} \label{rkp}
Let $p \in K[x]$. Then $K[p] = K(p) \cap K[x]$.
\end{lemma}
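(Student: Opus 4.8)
The plan is to prove both inclusions. The inclusion $K[p] \subseteq K(p) \cap K[x]$ is immediate: every element of $K[p]$ is a polynomial in $p$ with coefficients in $K$, hence lies in $K[x]$, and it obviously lies in $K(p)$. So the content of the lemma is the reverse inclusion $K(p) \cap K[x] \subseteq K[p]$.

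For the reverse inclusion, I would distinguish two cases according to whether $p$ is transcendental over $K$. If $p \in K$, then $K(p) = K[p] = K$, and the claim reads $K \cap K[x] = K$, which is true since $K$ is a field (so $K[x] \cap K = K$, using that nonconstant polynomials are never scalars). So assume $p \notin K$, i.e.\ $p$ is transcendental over $K$. Take any $r \in K(p) \cap K[x]$. Since $r \in K(p)$, I can write $r = a(p)/b(p)$ with $a, b \in K[y_1]$ and $\gcd\{a,b\} = 1$ in $K[y_1]$; since $p$ is transcendental, this representation is genuine, i.e.\ $b(p) \ne 0$ and $b(p) r = a(p)$ as an identity in $K[x]$. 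Now I want to conclude that $b(p)$ is a unit, i.e.\ $b \in K^{\ast}$, from which $r = b^{-1} a (p) \in K[p]$ follows.

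To see that $b(p) \in K$, I would invoke Proposition~\ref{pf} with $R = K[x]$, $f = (a,b) \in K[y_1]^2$, and the element $p \in R$. Since $\gcd\{a,b\} = 1$ in $K[y_1]$, Proposition~\ref{pf} gives
$$
\gcd\{a(p), b(p)\} = \big(\gcd\{a,b\}\big)\big|_{y_1 = p} = 1 .
$$
But $a(p) = b(p)\, r$, so $b(p)$ is a common divisor of $a(p)$ and $b(p)$, hence $b(p) \mid \gcd\{a(p),b(p)\} = 1$, i.e.\ $b(p)$ is a unit in $K[x]$, so $b(p) \in K \setminus \{0\}$. Therefore $r = b(p)^{-1} a(p) = (b^{-1} a)(p) \in K[p]$ once we note that a unit $b(p)$ forces $\deg b = 0$ (else $\deg b(p) = \deg b \cdot \deg p > 0$, contradicting $b(p) \in K$, using $p \notin K$), so $b^{-1} \in K$ and $b^{-1} a \in K[y_1]$.

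The main (and only mild) obstacle is bookkeeping around the degenerate case $p \in K$ and making sure the representation $r = a(p)/b(p)$ is legitimate; once transcendence of $p$ is in hand, everything is a one-line application of Proposition~\ref{pf}. No appeal to L\"uroth's theorem or to the heavier machinery of Section~\ref{bertinirk2} is needed.
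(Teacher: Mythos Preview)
Your proof is correct and follows essentially the same route as the paper: write $r = f_1(p)/f_2(p)$ with $\gcd\{f_1,f_2\}=1$, apply Proposition~\ref{pf} to get $\gcd\{f_1(p),f_2(p)\}=1$, and conclude that $f_2(p)$ is a unit in $K[x]$, hence $r \in K[p]$. The only difference is that you spell out the degenerate case $p \in K$ and the reason $\deg b = 0$ a bit more carefully than the paper does.
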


\begin{proof}
The inclusion $K[p] \subseteq K(p) \cap K[x]$ is trivial, so assume 
that $r \in K(p) \cap K[x]$. Then there are $f_1,f_2 \in K[y_1]$ such that 
$r = f_1(p)/f_2(p)$ and $\gcd\{f_1,f_2\} = 1$. From Proposition \ref{pf}, it follows 
that $\gcd\{f_1(p),f_2(p)\} = 1$, so $f_2(p)$ is a unit in $K[x]$.
Hence $f_2(p) \in K$ and $r \in K[p]$.
\end{proof}

If $f \in K[y_1] \setminus K$ and $p,q \in K[x]$, then $p/q$ is integral over $f_1(p/q)$.
But if $f \in K(y_1) \setminus K$ and $p,q \in K[x]$, then $p/q$ does not need to be integral 
over $f(p/q)$. The following theorem describes in which case $p/q$ is integral 
over $f(p/q)$. 

\begin{theorem} \label{f2th}
Suppose that $g = f_1(p/q)/f_2(p/q)$ for certain $p,q \in K[x]$ and
$f_1, f_2 \in K[y_1] \setminus K$. If $p/q \notin K$, then the following holds.
\begin{enumerate}
 
\item[\upshape (i)] $p/q$ is integral over $K[g]$, if and only if $\deg f_1 > \deg f_2$.

\item[\upshape (ii)] There are $p^{*}\!,q^{*} \in K[x]$ for which $K(p^{*}\!/q^{*}) = K(p/q)$,
such that $p^{*}\!/q^{*}$ is integral over $K[g]$ if and only if either $\deg f_1 > \deg f_2$ or 
$f_2$ has a root in $K$ of which the multiplicity is larger than that of $f_1$.

\end{enumerate}
\end{theorem}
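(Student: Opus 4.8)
The plan is to reduce both statements to the ``polynomial'' situation treated by Theorem \ref{enother} and Lemma \ref{rkp}, by clearing denominators in a controlled way. Write $g = f_1(p/q)/f_2(p/q)$. Let $s := \max\{\deg f_1,\deg f_2\}$ and pass from $f_1,f_2$ to the homogenizations $h_1 := y_2^s f_1(y_1/y_2)$ and $h_2 := y_2^s f_2(y_1/y_2)$ via (2) $\Rightarrow$ (1) of Lemma \ref{hf}, so that $g = h_1(p,q)/h_2(p,q)$ with $h_1,h_2 \in K[y_1,y_2]$ homogeneous of degree $s$. We may divide out $\gcd\{h_1,h_2\}$; since $p/q \notin K$ it is harmless to assume this gcd has no linear factor over $K$ after we track how roots of $f_1,f_2$ in $K$ correspond to linear factors of $h_1,h_2$ (a linear factor $\lambda y_1 + \mu y_2$ with $\mu \ne 0$ corresponds to the root $-\mu/\lambda$ of the affine polynomial, while the factor $y_2$ records the gap $\deg f_i < s$, i.e.\ the ``root at infinity''). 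Note that $\deg f_1 > \deg f_2$ is precisely the statement that $y_2 \mid h_2$ but $y_2 \nmid h_1$.

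For (i): the element $p/q$ is integral over $K[g]$ iff it satisfies a monic polynomial relation with coefficients in $K[g] = K[h_1(p,q)/h_2(p,q)]$. Since $p/q$ is transcendental over $K$, such a relation, after clearing denominators in $g$ and using that $g$ is itself algebraic of degree $s$ over $K(p/q)$ (hence $K(p/q)$ is integral over $K[p/q]$ but we need integrality \emph{over} $K[g]$), is equivalent to a homogeneous identity in $K[y_1,y_2]$. Concretely, $p/q$ integral over $K[g]$ means there are $c_0,\ldots,c_{r-1} \in K[y_1]$ and an exponent $N$ with
$$
h_2(p,q)^N \big((p/q)^r + \textstyle\sum_{j<r} c_j(g)\,(p/q)^j\big) = 0
$$
for suitable reinterpretation; the cleanest route is: $p/q$ is integral over $K[g]$ iff $q^s$ divides a suitable power of $h_2(p,q)$ inside $K[p/q][g]$, which by transcendence of $p/q$ becomes: $y_2$ does \emph{not} divide $h_2$ modulo the relevant ideal, i.e.\ we need the ``pole of $g$ at the place $q/p = 0$'' to be accounted for. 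I expect this to come down to: $p/q$ is integral over $K[g]$ iff $h_2(p,q)$ is, up to a power of $q$, a unit times something not divisible by $q$ — which holds iff $y_2 \mid h_2$, i.e.\ iff $\deg f_1 > \deg f_2$. The implication $\deg f_1 > \deg f_2 \Rightarrow$ integrality is the easy direction: then $g^{-1} = h_2(p,q)/h_1(p,q)$ has $q \mid h_2(p,q)$ (with multiplicity $s - \deg f_2 \ge 1$) and $q \nmid h_1(p,q)$, and one builds an explicit monic equation for $p/q$ over $K[g]$ by writing $q \cdot(\text{something}) = h_2(p,q) \cdot (\text{stuff})$ and bootstrapping; equivalently use Lemma \ref{rkp} applied to $K[p/q]$ together with $g \in K[p/q]$, $p/q \in K[g] \cdot (\text{integral closure})$. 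The converse uses that if $\deg f_1 \le \deg f_2$ then $g$ lies in $K + \mathfrak{m}$ for the place at infinity of $K[p/q]$ and does not generate enough of the pole, so $p/q$ fails to be integral — made rigorous by a degree/valuation count at the place $v$ of $K(p/q)$ with $v(p/q) < 0$.

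For (ii): we are now allowed to replace $p/q$ by any Möbius transform $p^*/q^* = (T_{11}p + T_{12}q)/(T_{21}p + T_{22}q)$ with $T \in \GL_2(K)$, by Proposition \ref{pquniq}, and we rewrite $g = h_1(p,q)/h_2(p,q)$ accordingly via the substitution induced by $T^{-1}$ on $(y_1,y_2)$; this permutes the linear factors of $h_1$ and $h_2$ and in particular can move any $K$-rational root of $f_2$ (equivalently any $K$-linear factor of $h_2$) to the role of $y_2$. So by part (i) applied to $p^*/q^*$ and the transformed pair, $p^*/q^*$ is integral over $K[g]$ iff $y_2$ divides the transformed $h_2$ but not the transformed $h_1$, which happens for \emph{some} $T \in \GL_2(K)$ iff $h_2$ has a linear factor over $K$ of multiplicity strictly larger than the multiplicity of that same factor in $h_1$. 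Translating back through the root/linear-factor dictionary of the first paragraph: such a factor is either $y_2$ itself — giving the condition $\deg f_1 > \deg f_2$, already covered — or $\lambda y_1 + \mu y_2$ with $\mu \ne 0$, corresponding to a root $-\mu/\lambda \in K$ of $f_2$ whose multiplicity exceeds its multiplicity in $f_1$. That is exactly the stated disjunction.

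The main obstacle, I expect, is making the passage ``integral over $K[g]$ $\Longleftrightarrow$ divisibility condition on $h_2$ by $y_2$'' fully rigorous: one must be careful that $g$ has degree $s$ over $K(p/q)$ (so the integral closure of $K[g]$ in $K(p/q)$ is what matters, not in a larger field), and that clearing denominators does not introduce spurious factors. The clean way to handle it is to localize at the place of $K(p/q)/K$ of interest: $p/q$ is integral over $K[g]$ iff it is integral over the localization of $K[g]$ at every maximal ideal, and the only obstruction is at the (unique) place where $g$ has a pole, namely the place $v_\infty$ with $v_\infty(p/q) = -1$; integrality there holds iff the pole order of $g$, which is $s - (\text{mult.\ of } y_2 \text{ in } h_2) + (\text{mult.\ of } y_2 \text{ in } h_1) \cdots$ — i.e.\ $\deg f_2 - \deg f_1$ if this is positive and else the pole is elsewhere — forces $p/q \in K[g]_{v_\infty}$, which is a one-line valuation inequality once set up. I would present (i) via this valuation-theoretic argument and then derive (ii) purely formally from (i) plus Proposition \ref{pquniq} and the linear-factor dictionary, with no further computation.
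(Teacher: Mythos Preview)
Your approach to (ii) matches the paper's: both reduce to (i) via a M\"obius transformation from Proposition~\ref{pquniq}, tracking how the degree condition transforms; the paper carries this out explicitly in Lemma~\ref{pqtrans}.

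For (i), your valuation-theoretic route is genuinely different from the paper's, which is entirely elementary. The paper simply notes that $f_1(y_1) - g\,f_2(y_1) \in K[g][y_1]$ annihilates $p/q$; when $\deg f_1 > \deg f_2$ its leading coefficient lies in $K^{*}$, so it is monic after scaling, giving integrality in one line. For the converse the paper takes a primitive minimal-degree $m \in K[g][y_1]$ with $m(p/q)=0$, shows via the Euclidean algorithm in $K(g)[y_1]$ and Gauss's lemma that $m \mid f_1 - gf_2$ over $K[g]$, and then observes that since every coefficient of $f_1 - gf_2$ has $g$-degree at most one, the cofactor must lie in $K[y_1]$, forcing the leading coefficient of $m$ to have positive $g$-degree; hence no monic relation exists. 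No valuations appear, and no facts about integral closures in function fields are imported.

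Your valuation argument is viable but the phrasing is backwards. You write that ``the only obstruction is at the (unique) place where $g$ has a pole'', but the logic runs the other way: the only place of $K(p/q)$ where $p/q$ has negative valuation is $v_\infty$, and integrality over $K[g]$ demands $v(p/q) \ge 0$ at every place with $v(g) \ge 0$; so the question is whether $v_\infty(g) < 0$, i.e.\ whether $g$ \emph{also} has a pole there, which is exactly $\deg f_1 > \deg f_2$. Stated this way the argument is clean, though it relies on the characterization of the integral closure of a PID in a finite extension as an intersection of valuation rings, which is heavier machinery than the paper uses. Your sketch of the easy direction via $g^{-1}$ and divisibility of $h_2(p,q)$ by $q$ in $K[x]$ is confused and should be replaced either by the one-line monic-polynomial observation above or by the corrected valuation statement.
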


\begin{proof}
Suppose that $p/q \notin K$. Then $p/q$ is transcendental over $K$.
\begin{enumerate}

\item[(i)] Notice that $f_1(p/q) - g f_2(p/q) = 0$. Assume without loss of generality that
$f_1$ is monic. If $\deg f_1 > \deg f_2$, then $f_1 - g f_2$ is monic as well, so that
$p/q$ is integral over $K[g]$. Hence suppose that $\deg f_1 \le \deg f_2$. Then the leading 
coefficient of $f_1 - g f_2$ has degree one in $g$ and every other coefficient of 
$f_1 - g f_2$ has degree at most one in $g$. 

Let $m \in K[g][y_1]$ be a primitive polynomial over $K[g]$ of minimum degree, such that 
$m(p/q) = 0$. Then $m \mid (f_1 - g f_2)$ over $K(g)$, because otherwise we can decrease the degree
of $m$, namely by replacing it by the primitive part of the remainder of the division over $K(g)$
of $f_1 - g f_2$ by $m$. 

Since $K[g]$ is a GCD-domain, we can take the primitive part of $(f_1 - g f_2)/m$, 
which we call $\tilde{m}$. Using the fact that that $K[g]$ is a GL-domain and a PSP-domain, 
we deduce that the product $m \cdot \tilde{m}$ is primitive and superprimitive respectively. 
It follows that $m \cdot \tilde{m}$ is the primitive part of $f_1 - g f_2$. Hence
$m \mid (f_1 - g f_2)$ over $K[g]$ as well. 

Since every coefficient of $f_1 - g f_2$ has degree at most one in $g$, we can deduce that
either $m \in K[y_1]$ or $(f_1 - g f_2)/m \in K[y_1]$. As $p/q$ is transcendent over $K$,
it follows that $(f_1 - g f_2)/m \in K[y_1]$. Consequently, the leading coefficient of 
$m$ is not contained in $K$. 

If $p/q$ would be integral over $K[g]$ then it follows
in a similar manner as $m \mid (f_1 - g f_2)$ over $K[g]$ that $m$ is a divisor over
$K[g]$ of a monic polynomial over $K[g]$, which is impossible because the 
leading coefficient of $m$ is not contained in $K$. So $p/q$ is not integral over $K[g]$.

\item[(ii)] From Proposition \ref{pquniq}, it follows that we may assume that
either $q^{*} = q$ and $p^{*} = p + \epsilon q$, or $q^{*} = p - \theta q$ and
$p^{*} = q + \epsilon(p - \theta q)$, where $\epsilon, \theta \in K$. 
Hence (ii) follows from Lemma \ref{pqtrans} below. \qedhere

\end{enumerate}
\end{proof}

\begin{lemma} \label{pqtrans}
Assume that $p, q \in K[x]$ and $g \in K(p/q)$, and say that 
$g = f_1(p/q)/\allowbreak f_2(p/q)$, where $f_1,f_2\in K[y_1]$. 
Define $s := \max\{\deg f_1,\deg f_2\}$.

If $\epsilon, \theta \in K$, then the following holds.
\begin{enumerate}
 
\item[\upshape (i)] Suppose that $q^{*} = q$ and $p^{*} = p + \epsilon q$.
Define
\begin{align*}
f^{*}_1 &:= f_1(y_1-\epsilon) & f^{*}_2 &:= f_2(y_1-\epsilon)\mbox{.}
\end{align*}
Then 
\begin{equation} \label{fraceq}
\frac{f^{*}_1(p^{*}\!/q^{*})}{f^{*}_2(p^{*}\!/q^{*})} = \frac{f_1(p/q)}{f_2(p/q)}\mbox{,}
\end{equation}
and $\deg f^{*}_1 > \deg f^{*}_2 \Longleftrightarrow \deg f_1 > \deg f_2$.

\item[\upshape (ii)] Suppose that $q^{*} = p - \theta q$ and $p^{*} = q + \epsilon(p - \theta q)$.
Define
\begin{align*}
f^{*}_1 &:= (y_1-\epsilon)^s \cdot f_1\Big(\frac{1}{y_1-\epsilon}+\theta\Big) &
f^{*}_2 &:= (y_1-\epsilon)^s \cdot f_2\Big(\frac{1}{y_1-\epsilon}+\theta\Big)\mbox{.}
\end{align*}
Then $f^{*}_1, f^{*}_2 \in K[y_1]$, 
\begin{equation*} \tag{\ref{fraceq}}
\frac{f^{*}_1(p^{*}\!/q^{*})}{f^{*}_2(p^{*}\!/q^{*})} = \frac{f_1(p/q)}{f_2(p/q)}\mbox{,}
\end{equation*}
and $\deg f^{*}_1 > \deg f^{*}_2 \Longleftrightarrow$ $\theta$ is a root of $f_2$
from which the multiplicity exceeds that of $f_1$.

\end{enumerate}
\end{lemma}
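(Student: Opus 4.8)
The plan is to handle the two parts separately; in each case everything reduces to writing down explicitly how the relevant substitution on $y_1$ acts, after which the identity \eqref{fraceq} and the degree comparison both drop out by elementary bookkeeping. Throughout I will use that $q \neq 0$ and, in part (ii), that $q^{*} = p - \theta q \neq 0$, since otherwise $p/q$ or $p^{*}/q^{*}$ is undefined and there is nothing to prove; I also use $f_2(p/q) \neq 0$, which holds because $g$ is given.

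For part (i) the whole point is the identity $p^{*}/q^{*} = (p + \epsilon q)/q = p/q + \epsilon$. From it, $f_i^{*}(p^{*}/q^{*}) = f_i\big((p/q + \epsilon) - \epsilon\big) = f_i(p/q)$ for $i = 1,2$, which is \eqref{fraceq}; and since $y_1 \mapsto y_1 - \epsilon$ preserves degrees (the leading coefficient is untouched), $\deg f_i^{*} = \deg f_i$, so $\deg f_1^{*} > \deg f_2^{*} \Leftrightarrow \deg f_1 > \deg f_2$. I anticipate no difficulty here.

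For part (ii) I would first check $f_1^{*}, f_2^{*} \in K[y_1]$: expanding $f_i(y_1) = \sum_{k \le s} c_{i,k} y_1^{k}$ gives
$$
f_i^{*}(y_1) = \sum_{k \le s} c_{i,k}\,\big(1 + \theta(y_1 - \epsilon)\big)^{k}(y_1 - \epsilon)^{s-k},
$$
which is polynomial because $s - k \geq 0$. Then for \eqref{fraceq} the computation to carry out is $p^{*} - \epsilon q^{*} = q$ and $q^{*} + \theta q = p$ (both immediate from $q^{*} = p - \theta q$ and $p^{*} = q + \epsilon q^{*}$), which give $p^{*}/q^{*} - \epsilon = q/q^{*}$ and $\frac{1}{p^{*}/q^{*} - \epsilon} + \theta = p/q$; substituting $y_1 = p^{*}/q^{*}$ into the defining formula for $f_i^{*}$ then yields $f_i^{*}(p^{*}/q^{*}) = (q/q^{*})^{s} f_i(p/q)$, and dividing the case $i = 1$ by the case $i = 2$ produces \eqref{fraceq}.

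The one genuinely delicate step — the main obstacle, though still elementary — is the degree count $\deg f_i^{*} = s - m_i$, where $m_i$ is the multiplicity of $\theta$ as a root of $f_i$ (with $m_i = 0$ when $\theta$ is not a root). I would obtain it by writing $f_i = (y_1 - \theta)^{m_i}\tilde f_i$ with $\tilde f_i(\theta) \neq 0$, so that
$$
f_i^{*}(y_1) = (y_1 - \epsilon)^{s - m_i}\,\tilde f_i\Big(\frac{1}{y_1 - \epsilon} + \theta\Big);
$$
expanding $\tilde f_i(\theta + w) = \sum_{j} e_{i,j}\,w^{j}$ with $0 \le j \le \deg f_i - m_i$ and nonzero constant term $e_{i,0} = \tilde f_i(\theta)$, and substituting $w = 1/(y_1 - \epsilon)$, gives
$$
f_i^{*}(y_1) = \sum_{j} e_{i,j}\,(y_1 - \epsilon)^{s - m_i - j}.
$$
The key points to verify are that these exponents are nonnegative (the smallest is $s - \deg f_i \geq 0$, using $s \geq \deg f_i$) and pairwise distinct, so no cancellation can occur and the top-degree term is $e_{i,0}(y_1 - \epsilon)^{s - m_i}$ with $e_{i,0} \neq 0$; hence $\deg f_i^{*} = s - m_i$ exactly, not merely $\leq$. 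The conclusion is then immediate: $\deg f_1^{*} > \deg f_2^{*} \Leftrightarrow m_1 < m_2$, and since $m_1 \geq 0$ this is precisely the statement that $\theta$ is a root of $f_2$ of multiplicity exceeding that of $f_1$. I note that this argument never uses derivatives, so it is valid in every characteristic.
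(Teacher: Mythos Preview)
Your proof is correct and follows essentially the same route as the paper's. The only cosmetic differences are that the paper first uses part (i) to reduce (ii) to the case $\epsilon = 0$, and phrases your degree computation $\deg f_i^{*} = s - m_i$ as the observation that $f_i(y_1+\theta)$ and $y_1^{s}f_i(y_1^{-1}+\theta)$ have their coefficient sequences reversed, so that $s - \deg f_i^{*} = \traildeg f_i(y_1+\theta)$; this is exactly your expansion argument in compressed form.
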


\begin{proof}
Suppose that $\epsilon, \theta \in K$. 
\begin{enumerate}
 
\item[\upshape (i)] For every $i \le 2$,
$$
f^{*}_i(p^{*}\!/q^{*}) = f_i\big((p+\epsilon q)/q-\epsilon\big) = f_i(p/q)\mbox{,}
$$
from which \eqref{fraceq} follows. The last claim follows from 
$\deg f^{*}_1 = \deg f_1$ and $\deg f^{*}_2 = \deg f_2$.

\item[\upshape (ii)] From (i), it follows that we may assume that $\epsilon = 0$.
So $q^{*} = p - \theta q$, $p^{*} = q$, $f^{*}_1 = y_1^s \cdot f_1(y_1^{-1} + \theta)$ 
and $f^{*}_2 = y_1^s \cdot f_2(y_1^{-1} + \theta)$. Take $i \le 2$ as arbitrary. 
From $\deg f_i \le s$, we deduce that $f^{*}_i \in K[y_1]$. Furthermore
$$
f^{*}_i\Big(\frac{p^{*}}{q^{*}}\Big) = \Big(\frac{q}{p - \theta q}\Big)^s 
f_i\Big(\frac{p - \theta q}{q} + \theta\Big) = \Big(\frac{q}{p - \theta q}\Big)^s 
f_i\Big(\frac{p}{q}\Big)\mbox{.}
$$
So $f^{*}_1, f^{*}_2 \in K[y_1]$ and \eqref{fraceq} is satisfied. 

It remains to prove the last claim of (ii).
Notice that $f_i(y_1 + \theta)$ and $f^{*}_i = y_1^s \cdot f_i(y_1^{-1} + \theta)$
can be changed into each other by reversing the order of the coefficients of $y_1^0, y_1^1, \ldots,
y_1^s$. Hence 
$$
s - \deg f^{*}_i = \traildeg f_i(y_1 + \theta)\mbox{,}
$$
which is the multiplicity of $0$ as a root of $f_i(y_1 + \theta)$ and of $\theta$ as a root of $f_i$.
So $\deg f^{*}_1 > \deg f^{*}_2 \Longleftrightarrow 
\traildeg f_1(y_1 + \theta) < \traildeg f_2(y_1 + \theta)$, and the last claim of (ii)
follows. \qedhere

\end{enumerate}
\end{proof}

\begin{corollary} \label{bavula}
Let $p,q \in K[x]$. Suppose that $R \ne K$ is a $K$-subalgebra of $K(p/q)$ which is integrally 
closed in $K(p/q)$, such that either $K$ is algebraically closed, or $R$ contains a nonconstant 
polynomial.

Then there exist $p^{*}\!,q^{*} \in K[x]$ such that $K(p^{*}\!/q^{*}) = K(p/q)$ 
and $p^{*}\!/q^{*} \in R$, where $q^{*} = 1$ if $R$ contains a nonconstant polynomial.
\end{corollary}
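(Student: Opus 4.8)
The plan is to reduce the statement to Theorem \ref{f2th}(ii) together with the earlier results on generators of Lüroth fields. First I would dispose of the degenerate cases. If $p/q \in K$, then $R \subseteq K(p/q) = K$, and since $K$ is then algebraically closed or $R$ contains a nonconstant polynomial, either way one can take $p^{*} = 1$, $q^{*} = 1$ (note $R \supseteq K$ always, since $R$ is a $K$-subalgebra; the statement only promises $p^{*}/q^{*} \in R$, which is trivially true here). Likewise if $R = K$ and $K$ is algebraically closed, there is nothing to prove beyond exhibiting any generator. So assume $p/q \notin K$, hence $p/q$ is transcendental over $K$, and $R \supsetneq K$ in the polynomial case.

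Next I would produce an element of $R$ that generates $K(p/q)$ over $K$ (not merely a subfield). Pick any nonconstant $g \in R$; since $g \in K(p/q)$, write $g = f_1(p/q)/f_2(p/q)$ with $f_1, f_2 \in K[y_1]$, $\gcd\{f_1,f_2\}=1$, not both constant. In the polynomial case, by Corollary \ref{gordan} or Theorem \ref{enother} applied after replacing $q$ by a linear combination so that $\gcd\{p,q\}=1$, we may in fact take $q = 1$, i.e.\@ $R$ contains a nonconstant polynomial $p$ with $K(p) = K(p/q)$, and then $p^{*} = p$, $q^{*} = 1$ already works (we do not even need integral closure of $R$ here, only that $R$ contains a nonconstant polynomial and is a subalgebra of $K(p)$ — wait, we do need to know $K(g) = K(p)$; this uses $K[g] \subseteq R$, $p$ integral over $K[g]$ by Theorem \ref{f2th}(i) since $\deg f_1 > \deg f_2 = 0$, and $R$ integrally closed in $K(p/q)$, giving $p \in R$). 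So the polynomial case is finished; the substantive case is when $K$ is algebraically closed and $R$ need not meet $K[x]$ nontrivially.

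In that remaining case, fix a nonconstant $g \in R$, written as above with $\gcd\{f_1,f_2\}=1$. If already $\deg f_1 > \deg f_2$, then $p/q$ is integral over $K[g] \subseteq R$ by Theorem \ref{f2th}(i), and since $R$ is integrally closed in $K(p/q)$ we get $p/q \in R$, so $p^{*}=p$, $q^{*}=q$ works. Otherwise $\deg f_1 \le \deg f_2$; here I invoke the fact that $K$ is algebraically closed, so $f_2$ splits into linear factors over $K$, and since $\gcd\{f_1,f_2\}=1$ and $f_2$ is nonconstant (if $f_2 \in K$ then $\deg f_1 \le 0$ forces $g \in K$, contradiction), $f_2$ has a root $\theta \in K$ whose multiplicity in $f_2$ strictly exceeds its multiplicity in $f_1$ (which is $0$). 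By Theorem \ref{f2th}(ii), there exist $p^{*},q^{*} \in K[x]$ with $K(p^{*}/q^{*}) = K(p/q)$ such that $p^{*}/q^{*}$ is integral over $K[g]$; since $K[g] \subseteq R$ and $R$ is integrally closed in $K(p/q)$, we conclude $p^{*}/q^{*} \in R$. The main obstacle is bookkeeping: one must make sure that the hypotheses of Theorem \ref{f2th} ($f_1, f_2 \notin K$) are actually met after the reductions, which is why the case split on $\deg f_1$ versus $\deg f_2$ is handled separately rather than folded in — when $\deg f_1 > \deg f_2$ the polynomial $f_2$ could be constant, so Theorem \ref{f2th}(ii) does not literally apply and one falls back on part (i).
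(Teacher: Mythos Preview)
Your argument follows the paper's two-case split: in the algebraically closed case, pick a nonconstant $g \in R$ and use Theorem~\ref{f2th} to produce a generator of $K(p/q)$ integral over $K[g]$ (the paper folds your two sub-cases into the single disjunction appearing in Theorem~\ref{f2th}(ii)); in the polynomial case, first pass to a polynomial generator $p^{*}$ via Theorem~\ref{enother} and then argue that $p^{*}$ is integral over $K[g]$. Two minor points: the paper makes explicit, via Lemma~\ref{rkp}, the step $g \in K[p^{*}]$ that underlies your claim ``$\deg f_2 = 0$'' in the polynomial case; and your fallback to Theorem~\ref{f2th}(i) when $f_2 \in K$ does not actually escape the restriction $f_1,f_2 \notin K$, since part~(i) carries it too---though the integrality is then elementary anyway, as $f_1(y_1) - g f_2$ is already monic over $K[g]$.
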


\begin{proof}
Since $R \ne K$, we can take $g \in R \setminus K$, and we have $p/q \notin K$.

First suppose that $K$ is algebraically closed.
Write $g = f_1(p/q)/f_2(p/q)$ such that $\gcd\{f_1,f_2\} = 1$.
Since $K$ is algebraically closed, either $\deg f_1 > \deg f_2$ or $f_2$ has a root
from which the multiplicity exceeds that of $f_1$. From Theorem \ref{f2th}, it
follows that there exist $p^{*}\!,q^{*} \in K[x]$, such that $K(p^{*}\!/q^{*}) = K(p/q)$ and
$p^{*}\!/q^{*}$ is integral over $R$. Hence $p^{*}\!/q^{*} \in R$ by assumption on $R$.

Next suppose next that $R$ contains a nonconstant polynomial $r$. Take $p^{*}\!,q^{*}$
such that $K(p/q) = K(p^{*}\!/q^{*})$ and $\gcd\{p^{*}\!,q^{*}\} = 1$. 
From (2) $\Rightarrow$ (3), (4) of Theorem \ref{enother}, we deduce that 
$K(p/q) \in \{K(p^{*}),K(q^{*})\}$, and we say that $K(p/q) = K(p^{*})$. 
Then we can take $q^{*} = 1$.

Since $r \in K(p^{*}) \cap K[x]$, it follows from Lemma \ref{rkp} that $r \in K[p^{*}]$.
As $r$ is nonconstant, $p^{*} \in K(p/q)$ is integral over $K[r] \subseteq R$. 
Hence $p^{*}\!/q^{*} = p^{*} \in R$ by assumption on $R$.
\end{proof}

The case where $K$ is an algebraically closed field of characteristic zero of 
Corollary \ref{bavula} appears as Corollary 2.2 in \cite{MR2134288}.

Notice that the conditions on $f_1$ and $f_2$ in Theorem \ref{f2th} can be viewed
as $f_2$ having a root in the projective line $K \cup \{\infty\}$
over $K$ with larger multiplicity than $f_1$. 
The formulation of the following lemma was inspired by that idea.

\begin{lemma} \label{vlem}
Suppose that $f_1,f_2,f_1^{*}\!,f_2^{*} \in K[y_1]$, such that $f_2f_2^{*} \ne 0$. 
For every $g\in K[y_1]$, write $v_\theta(g)$ for the multiplicity of $\theta$ as a 
root of $g$ if $\theta \in K$, and define $v_{\infty}(g) := -\deg g$. Then
$$
v_{\theta}(f_1/f_2) := v_\theta(f_1) - v_\theta(f_2)
$$
is well-defined, and
\begin{align*}
\big(v_\theta(f_1) - v_\theta(f_2)\big) + \big(v_\theta(f^{*}_1) - v_\theta(f^{*}_2)\big) 
&= v_\theta(f_1f_1^{*}) - v_\theta(f_2f_2^{*}) \\
\min\big\{v_\theta(f_1) - v_\theta(f_2), v_\theta(f^{*}_1) - v_\theta(f^{*}_2)\big\}
&\le v_\theta(f_1f_2^{*}+f^{*}_1f_2) - v_\theta(f_2f_2^{*})
\end{align*}
for all $\theta \in K \cup \{\infty\}$.
\end{lemma}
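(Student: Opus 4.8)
The plan is to recognize that, for each $\theta \in K \cup \infty$, the three assertions say precisely that $v_\theta$ extends from $K[y_1]$ to a valuation of the rational function field $K(y_1)$: for $\theta \in K$ this extension is the $(y_1-\theta)$-adic valuation, and for $\theta = \infty$ it is $-\deg$ (the valuation at infinity). Thus everything reduces to the two basic facts, valid for all $a,b \in K[y_1]$ and all $\theta \in K \cup \infty$,
\[
v_\theta(ab) = v_\theta(a) + v_\theta(b) \qquad\text{and}\qquad v_\theta(a+b) \ge \min\{v_\theta(a),v_\theta(b)\},
\]
where $v_\theta(0) = +\infty$ (consistent with $\deg 0 = -\infty$, and with regarding every element of $K \cup \infty$ as a root of $0$ of infinite multiplicity). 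For $\theta \in K$ these hold because $K[y_1]$ is a UFD in which $y_1-\theta$ is prime, so the exponent of $y_1-\theta$ in $ab$ is the sum of the exponents in $a$ and in $b$, and a common power of $y_1-\theta$ dividing both $a$ and $b$ divides $a+b$. For $\theta = \infty$ they are $\deg(ab) = \deg a + \deg b$ (since $K[y_1]$ is a domain) and $\deg(a+b) \le \max\{\deg a,\deg b\}$, with the sign reversed. Since $f_2 f_2^* \ne 0$, all denominators appearing below are nonzero, and since $v_\infty$ only takes values in $\{0,-1,-2,\ldots\}\cup\{+\infty\}$ no indeterminate expression $(+\infty)+(-\infty)$ occurs.

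Granting this, well-definedness of $v_\theta(f_1/f_2) := v_\theta(f_1) - v_\theta(f_2)$ follows at once: if $f_1/f_2 = g_1/g_2$ in $K(y_1)$ with $g_2 \ne 0$, then $f_1 g_2 = f_2 g_1$, so $v_\theta(f_1) + v_\theta(g_2) = v_\theta(f_2) + v_\theta(g_1)$ by multiplicativity, and rearranging gives $v_\theta(f_1) - v_\theta(f_2) = v_\theta(g_1) - v_\theta(g_2)$. The first displayed equality of the lemma is then multiplicativity once more, $v_\theta(f_1f_1^*) - v_\theta(f_2f_2^*) = \bigl(v_\theta(f_1)+v_\theta(f_1^*)\bigr) - \bigl(v_\theta(f_2)+v_\theta(f_2^*)\bigr)$. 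For the inequality I would first rewrite, using well-definedness together with $f_1f_2^*/(f_2f_2^*) = f_1/f_2$ and $f_1^*f_2/(f_2f_2^*) = f_1^*/f_2^*$,
\[
v_\theta(f_1) - v_\theta(f_2) = v_\theta(f_1f_2^*) - v_\theta(f_2f_2^*), \qquad v_\theta(f_1^*) - v_\theta(f_2^*) = v_\theta(f_1^*f_2) - v_\theta(f_2f_2^*),
\]
and then subtract the finite number $v_\theta(f_2f_2^*)$ from the ultrametric inequality $v_\theta(f_1f_2^* + f_1^*f_2) \ge \min\{v_\theta(f_1f_2^*),v_\theta(f_1^*f_2)\}$; this yields exactly the claimed bound, with the case $f_1f_2^* + f_1^*f_2 = 0$ handled by $v_\theta(0) = +\infty$.

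I do not expect any real obstacle here — the substance is simply that degree and order of vanishing are valuations on $K(y_1)$. The only point requiring care is the arithmetic of the conventions $\deg 0 = -\infty$ and $v_\theta(0) = +\infty$, so that the displayed identities stay literally true when some $f_i = 0$ or when $f_1f_2^* + f_1^*f_2 = 0$; everything else is routine.
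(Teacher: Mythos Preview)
Your proof is correct and follows essentially the same route as the paper: both reduce everything to the multiplicativity $v_\theta(ab)=v_\theta(a)+v_\theta(b)$ and the ultrametric bound $v_\theta(a+b)\ge\min\{v_\theta(a),v_\theta(b)\}$, deduce well-definedness from cross-multiplying, and obtain the inequality by passing to the common denominator $f_2f_2^{*}$ before applying the ultrametric bound. You are simply more explicit about the valuation-theoretic framing and the edge cases with $v_\theta(0)=+\infty$, which the paper leaves implicit.
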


\begin{proof}
Take $\theta \in K \cup \{\infty\}$ as arbitrary. The equality is easy to prove, and 
$$
f_1/f_2 = f_2^{*}/f_1^{*} ~\Longrightarrow~ 
v_\theta(f_1) - v_\theta(f_2) = v_\theta(f^{*}_2) - v_\theta(f^{*}_1)
$$
follows from it, so $v_{\theta}(f_1/f_2)$ is well-defined.
The inequality follows from the following.
\begin{align*}
v_\theta(f_1) - v_\theta(f_2) &= v_\theta(f_1f_2^{*}) - v_\theta(f_2f_2^{*}) \\
v_\theta(f^{*}_1) - v_\theta(f^{*}_2) &= v_\theta(f^{*}_1f_2) - v_\theta(f_2f_2^{*}) \\
\min\big\{v_\theta(f_1f_2^{*}),v_\theta(f^{*}_1f_2)\} &\le v_\theta(f_1f_2^{*}+f^{*}_1f_2)
\mbox{.} \qedhere
\end{align*}
\end{proof}

\begin{theorem} \label{Ggth}
Let $p, q \in K[x]$ and let $G$ be a subset of $K(p/q)$. Then $p/q$ is integral over
$K[G]$, if and only if there exist $g \in G$ such that $p/q$ is integral over $K[g]$.
\end{theorem}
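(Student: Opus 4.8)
The plan is to push everything through a single valuation on $K(p/q)$, namely the "order of pole at $y_1=\infty$" that Lemma~\ref{vlem} provides (the case $\theta=\infty$), using the slogan: \emph{integrality over a subring on which this valuation is nonnegative cannot create a pole}, while $p/q$ itself has a pole there.

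The direction $\Longleftarrow$ is immediate since $K[g]\subseteq K[G]$. For $\Longrightarrow$, if $p/q\in K$ then $K(p/q)=K$, so $G\subseteq K$ and the claim is trivial; hence I would assume $p/q\notin K$. Because $K$ is algebraically closed in $K(x)\supseteq K(p/q)$ (a fact the paper uses elsewhere, e.g.\ in the proof of Lemma~\ref{degs}), $p/q$ is transcendental over $K$, so every element of $K(p/q)$ is $f_1(p/q)/f_2(p/q)$ with $f_1,f_2\in K[y_1]$, $f_2\neq 0$, and I set $v_\infty\big(f_1(p/q)/f_2(p/q)\big):=\deg f_2-\deg f_1$. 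By Lemma~\ref{vlem} this is well defined and satisfies $v_\infty(ab)=v_\infty(a)+v_\infty(b)$ and $v_\infty(a+b)\ge\min\{v_\infty(a),v_\infty(b)\}$; in particular $\mathcal O:=\{a\in K(p/q):v_\infty(a)\ge 0\}$ is a $K$-subalgebra of $K(p/q)$, and $v_\infty(p/q)=-1$.

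The heart of the proof is to produce a single $g\in G$ with $v_\infty(g)<0$. Suppose not; then $G\subseteq\mathcal O$, hence $K[G]\subseteq\mathcal O$ (no finiteness reduction on $G$ is needed, since $\mathcal O$ is already a ring). Writing an integral equation $(p/q)^n=-\sum_{i<n}c_i(p/q)^i$ with $c_i\in K[G]\subseteq\mathcal O$, the left side has $v_\infty=-n$, whereas $v_\infty\big(c_i(p/q)^i\big)=v_\infty(c_i)-i\ge-i>-n$ for every $i<n$, so the right side has $v_\infty>-n$ by the ultrametric inequality — a contradiction. So fix $g\in G$ with $v_\infty(g)<0$ and write $g=f_1(p/q)/f_2(p/q)$ with $\gcd\{f_1,f_2\}=1$; then $v_\infty(g)<0$ forces $f_1\neq 0$ and $\deg f_1>\deg f_2\ge 0$. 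Hence $\lc(f_1)^{-1}\big(f_1(T)-f_2(T)\,g\big)$ is a monic polynomial in $T$ over $K[g]$ (its $T$-degree is $\deg f_1\ge 1$ with leading coefficient $\lc(f_1)\in K\setminus\{0\}$) which vanishes at $T=p/q$, so $p/q$ is integral over $K[g]$. Alternatively one could invoke Theorem~\ref{f2th}(i) at this point, after splitting off the subcase $f_2\in K$.

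Given the machinery already assembled, I do not expect a genuinely hard step; the conceptual move is simply recognizing that, among all valuations of $K(y_1)$ (in particular among the $v_\theta$ of Lemma~\ref{vlem}), only the pole of $p/q$ at $\infty$ is relevant, which collapses the statement to the slogan above. The remaining work is bookkeeping: confirming via Lemma~\ref{vlem} that $v_\infty$ is an honest valuation on the fraction field, that $v_\infty(p/q)=-1$, and that the degenerate cases ($g=0$, or $f_2\in K$) cause no trouble.
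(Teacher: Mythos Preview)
Your proof is correct and is precisely the argument the paper has in mind: the paper's own proof is the single sentence ``This follows from (i) of theorem~\ref{f2th} and lemma~\ref{vlem} with $\theta = \infty$,'' and you have unpacked exactly that---using $v_\infty$ from Lemma~\ref{vlem} to show some $g\in G$ must have $\deg f_1>\deg f_2$, then invoking Theorem~\ref{f2th}(i) (or the equivalent explicit monic relation).
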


\begin{proof}
This follows from (i) of Theorem \ref{f2th} and Lemma \ref{vlem} with $\theta = \infty$.
\end{proof}

One can wonder whether the condition that either $K$ is algebraically closed,
or $R$ contains a nonconstant polynomial, is necessary in Corollary \ref{bavula}. 
The answer is affirmative, as the following example makes clear.

\begin{example}
Take for $R$ the integral closure of $\R[1/(x_1^2+1),x_1/(x_1^2+1)]$ in its 
fraction field $\R(x_1)$. One can show that $R = \R[1/(x_1^2+1),x_1/(x_1^2+1)]$, but there is 
no need to do that to show that Corollary \ref{bavula} does not hold for $R$.
From Theorem \ref{Ggth} and (ii) of Theorem \ref{f2th} with $f_1 \in \{1,x_1\}$ and
$f_2 = x_1^2 + 1$, it follows that $\R(x_1) = \R(x_1/1)$ is not generated by an element of 
$\R(x_1)$ which is integral over $\R[1/(x_1^2+1),x_1/(x_1^2+1)]$. 
Hence Corollary \ref{bavula} does not hold for $R$. 

Furthermore, $R$ is not contained in $\R[p/q]$ for any $p,q \in \R[x_1]$. Indeed, suppose
that $R \subseteq \R[p/q]$ for some $p,q \in \R[x_1]$. By Theorem 
\ref{pquniq}, it follows from $\R(p/q) = \R(x_1/1)$ that $\deg q \le 1$. This contradicts
the fact that $x_1^2 + 1$ does not decompose into linear factors over $\R$. 
\end{example}

We end this section with a problem for which we do not know the answer.

\begin{problem}
Let $p, q \in K[x]$ which are not both constant, such that $\gcd\{p,q\} = 1$.
Suppose that $K(p/q)$ is algebraically closed in $K(x)$.
Is $\lambda p + q$ reducible for only finitely many $\lambda \in K$?
\end{problem}

\begin{proposition}
The above problem has an affirmative answer if $K$ is algebraically closed. 
\end{proposition}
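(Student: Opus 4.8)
The plan is to reduce the reducibility question for the pencil $\lambda p + q$ over an algebraically closed $K$ to a statement about fibers of the rational map $x \mapsto (p(x):q(x))$ on $\mathbb{P}^1$, where Bertini-type irreducibility theorems apply. First I would dispose of the degenerate cases: if $p/q \in K$ this is vacuous, and by Proposition \ref{pquniq} together with Theorem \ref{hmgrk2}(ix) I may pass to a generator with $\deg(p,q)$ minimal, so I assume $(p,q)$ is primitive, not constant, $\gcd\{p,q\}=1$, and $K(p/q)$ is algebraically closed in $K(x)$; in particular $p/q$ is transcendental over $K$ and $p,q$ are algebraically independent over $K$ only when $n \ge 2$, while for $n=1$ one of $p,q$ is already linear after a coordinate change and the statement is elementary.

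The main step: for $\lambda \in K$, the polynomial $\lambda p + q$ (suitably homogenized) is, up to the constant $\gcd$ issue controlled by Theorem \ref{ph} and Lemma \ref{degs}, the numerator defining the fiber of the morphism $\varphi\colon \operatorname{Spec} K[x] \dashrightarrow \mathbb{P}^1_K$, $\varphi = (p:q)$, over the point $(1:-\lambda) \in \mathbb{P}^1$. Because $K(p/q)$ is algebraically closed in $K(x)$, the generic fiber of $\varphi$ is geometrically irreducible over the function field $K(p/q)$ of $\mathbb{P}^1$. The classical Bertini irreducibility theorem for pencils (in the form valid over an arbitrary algebraically closed field — e.g. the version underlying Theorem 37 of \cite[\S~3.3]{MR1770638}, which the paper already invokes) then says that all but finitely many members of the pencil are irreducible: the exceptional set of $\lambda$ for which the fiber degenerates (becomes reducible or non-reduced) is contained in a proper closed subset of the parameter line $\mathbb{P}^1$, hence is finite. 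I would make this precise by spreading out: there is a dense open $U \subseteq \mathbb{P}^1$ such that $\varphi^{-1}(u)$ is irreducible for all closed $u \in U$, and the complement $\mathbb{P}^1 \setminus U$ is finite; translating back, $\lambda p + q$ is irreducible for all $\lambda$ outside a finite set, after accounting separately for whether the point $(1:-\lambda)$ lies in the image of $\varphi$ and for the factor $\gcd\{p,q\} = 1$ ensuring no spurious common factor appears.

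The one subtlety I would address carefully is the passage between "the scheme-theoretic fiber $\varphi^{-1}(1:-\lambda)$ is irreducible (and generically reduced)" and "the polynomial $\lambda p + q \in K[x]$ is irreducible." These differ by: (a) multiplicities of components — handled because generic smoothness of the generic fiber, or at least generic reducedness, propagates to all but finitely many special fibers; (b) the homogenization/affine chart discrepancy when $\deg p \ne \deg q$, which only affects $\lambda$ such that the corresponding point is at infinity in the target, a single value; and (c) the case where $\lambda p+q$ might be a nonzero constant, which happens for at most one $\lambda$ since $p,q$ are not both constant. Collecting these finitely many exceptions with the finite exceptional set from Bertini gives the claim.

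The hard part will be stating the version of Bertini's theorem one actually uses so that it applies over an arbitrary algebraically closed field (not just in characteristic zero), and checking that the hypothesis "$K(p/q)$ algebraically closed in $K(x)$" is exactly the input that makes the generic fiber geometrically irreducible rather than merely irreducible — in positive characteristic one must rule out that the generic fiber is irreducible but geometrically reducible, which would make the pencil behave badly (every member reducible). I expect this is precisely why the hypothesis is phrased via algebraic closedness of the field extension: $K$ being algebraically closed forces $K(p/q)$ to be relatively algebraically closed in $K(x)$ to coincide with geometric irreducibility of the generic fiber, and then the geometric Bertini theorem (valid in all characteristics for pencils, with the exceptional locus finite) finishes the argument. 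I would cite \cite[\S~3.3]{MR1770638} for the precise statement and adapt its Theorem 37 to the pencil setting.
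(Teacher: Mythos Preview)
Your core argument is correct but takes a different route from the paper's. The paper argues by contradiction: assuming $\lambda p+q$ is reducible for infinitely many $\lambda$, it first invokes Corollary~3 of \cite[\S 3.1]{MR1770638} to get reducibility for \emph{all} $\lambda$ of generic degree, then applies Theorem~37 of \cite[\S 3.3]{MR1770638} as a \emph{structure theorem} to write $(p,q)=h^{*}(p^{*}\!,q^{*})$ with $\deg(p^{*}\!,q^{*})<\deg(p,q)$; the hypothesis that $K(p/q)$ is algebraically closed in $K(x)$ then forces $K(p/q)=K(p^{*}\!/q^{*})$, and Proposition~\ref{pquniq} yields the degree contradiction. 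Your approach instead packages everything into a single direct step: the hypothesis makes the generic member $tp+q$ absolutely irreducible over $K(t)$, and openness of the absolutely irreducible locus on the parameter line gives finiteness immediately. Your version is cleaner conceptually; the paper's version is more explicit and reuses its own Proposition~\ref{pquniq}.

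Two small corrections. First, your opening reduction via Theorem~\ref{hmgrk2}(ix) is unnecessary and slightly misleading: the problem already \emph{assumes} $\gcd\{p,q\}=1$ and $K(p/q)$ algebraically closed in $K(x)$, so there is nothing to reduce, and passing to a different generator would reparametrize the pencil rather than simplify it. Second, your handling of subtlety~(a) via ``generic reducedness propagates'' is the wrong mechanism in positive characteristic, since $K(x)/K(p/q)$ need not be separable. The fix is simpler than you suggest: once you know $tp+q$ is irreducible over $\overline{K(t)}$ (which is exactly what ``$K(p/q)$ algebraically closed in $K(x)$'' gives, as you correctly identify), it is in particular not a proper power there, and the proper-power locus in the space of degree-$d$ forms is closed; so its preimage in $\mathbb{A}^1_\lambda$ misses the generic point and is therefore finite. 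Equivalently, just use that the absolutely irreducible locus is open in the parameter space --- this handles reducibility and proper powers in one stroke, and no reducedness argument is needed.
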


\begin{proof}
Suppose that $K$ is algebraically closed and that $\lambda p + q$ is reducible over $K$ 
for infinitely many $\lambda \in K$. On account of Corollary 3 of 
\cite[\S~3.1]{MR1770638}, $\lambda p + q$ is reducible over $K$ for all 
$\lambda \in K$ such that $\deg (\lambda p + q) = \deg (p,q)$. 

Notice that $tp + q$ is irreducible over $K[t]$, and hence over $K(t)$ as well,
because $\gcd\{p,q\} = 1$. From the Bertini-Krull theorem, see Theorem 37 of 
\cite[\S~3.3]{MR1770638}, it follows that 
$tp + q = th^{*}_1(p^{*}\!,q^{*}) + h^{*}_2(p^{*}\!,q^{*})$ 
for some $p^{*}\!, q^{*} \in K[x]$ and a homogeneous
$h^{*} \in K[y_1,y_2]^2$, such that $\deg (p,q) > \deg (p^{*}\!,q^{*})$. Hence
$(p,q) = h^{*}(p^{*}\!,q^{*})$. From (1) $\Rightarrow$ (2) 
of Lemma \ref{hf}, we deduce that
$$
p/q = \frac{h^{*}_1(p^{*}\!,q^{*})}{h^{*}_2(p^{*}\!,q^{*})}
= \frac{h^{*}_1(p^{*}\!/q^{*},1)}{h^{*}_2(p^{*}\!/q^{*},1)} \in K(p^{*}\!/q^{*})\mbox{,}
$$
so $p^{*}\!/q^{*}$ is algebraic over $K(p/q)$. By assumption, $p^{*}\!/q^{*} \in K(p/q)$,
so $K(p/q) = K(p^{*}\!/q^{*})$. It follows from Proposition \ref{pquniq} that
either $\gcd\{p,q\} \ne 1$ or $\deg (p,q) \le \deg (p^{*}\!,q^{*})$. A contradiction.
\end{proof}

\section{A generalization of a theorem of Paul Gordan and Max N{\"o}ther}

We assume from now on that $m = n$. Therefore $y = (y_1,y_2,\ldots,y_n)$.

The following result is useful if we see a rational map as a product of 
a rational function and a polynomial map.

\begin{proposition} \label{gquasi}
Assume $g \in K(x) \setminus \{0\}$ and $H \in K(x)^n$. Then 
$$
\jac H \cdot H = \tr \jac H \cdot H 
~\Longleftrightarrow~ \jac (gH) \cdot gH = \tr \jac (gH) \cdot gH\mbox{.}
$$
\end{proposition}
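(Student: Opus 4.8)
The plan is a direct matrix computation; I do not expect any genuine obstacle beyond keeping track of conventions. First I would record the product rule for the Jacobian: viewing $H$ as a column vector and $\jac g$ as the row vector of partials of $g$, one has
$$
\jac (gH) = g\,\jac H + H\,\jac g ,
$$
where the last summand is the rank-one matrix with $(k,i)$-entry $H_k\,(\partial g/\partial x_i)$. I would then abbreviate $\sigma := \jac g\cdot H = \sum_{i=1}^n H_i\,(\partial g/\partial x_i)\in K(x)$, the directional derivative of $g$ along $H$, which is a scalar; and note the two identities $H\,\jac g\cdot H = \sigma H$ and $\tr(H\,\jac g) = \sum_{k=1}^n H_k\,(\partial g/\partial x_k) = \sigma$.

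Next I would compute the two sides for $gH$. From the product rule,
$$
\jac (gH)\cdot gH = \big(g\,\jac H + H\,\jac g\big)\,gH = g^2\,\jac H\cdot H + g\,\sigma\,H .
$$
For the other side, the above identity for the trace of the rank-one term gives $\tr\jac (gH) = g\,\tr\jac H + \sigma$, and hence
$$
\tr\jac (gH)\cdot gH = \big(g\,\tr\jac H + \sigma\big)\,gH = g^2\,\tr\jac H\cdot H + g\,\sigma\,H .
$$
Subtracting, the terms $g\,\sigma\,H$ cancel and I obtain the key identity
$$
\jac (gH)\cdot gH - \tr\jac (gH)\cdot gH = g^2\big(\jac H\cdot H - \tr\jac H\cdot H\big).
$$

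Since $K(x)$ is a field (or at worst: an integral domain) and $g\neq 0$, the factor $g^2$ is nonzero, so the left-hand side of the key identity vanishes if and only if $\jac H\cdot H - \tr\jac H\cdot H = 0$. This establishes both implications of the proposition simultaneously. The only place needing a little care is the bookkeeping for the rank-one summand $H\,\jac g$ — in particular that $H\,\jac g\cdot H = \sigma H$ and $\tr(H\,\jac g)=\sigma$ with the \emph{same} scalar $\sigma$, which is exactly why the unwanted terms cancel; everything else is the ordinary product rule together with $K(x)$-linearity.
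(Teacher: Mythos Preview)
Your proof is correct and follows essentially the same route as the paper: both use the product rule $\jac(gH) = g\,\jac H + H\,\jac g$, compute $\tr\jac(gH) = g\,\tr\jac H + \jac g\cdot H$, and arrive at the identity $\jac(gH)\cdot gH - \tr\jac(gH)\cdot gH = g^{2}\big(\jac H\cdot H - \tr\jac H\cdot H\big)$ (the paper writes it as $g^{-1}$ times the left side equals $g$ times the right side, which is the same thing). Your introduction of the scalar $\sigma = \jac g\cdot H$ makes the cancellation slightly more transparent, but the argument is otherwise identical.
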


\begin{proof}
Using the chain rule for differentation, we can deduce that
$$
\jac (gH) = H \cdot \jac g + g \jac H
\qquad \mbox{and} \qquad
\tr \jac (gH) = \jac g \cdot H + g \tr \jac H\mbox{,}
$$
from which it follows that
$$
\jac (gH) \cdot H - g \jac H \cdot H 
= H \cdot \jac g \cdot H
= H \cdot \tr \jac (gH) - H \cdot g \tr \jac H\mbox{.}
$$
Consequently,
$$
\jac (gH) \cdot H - H \cdot \tr \jac (gH)
= g \jac H \cdot H - H \cdot g \tr \jac H\mbox{.}
$$
We can rewrite this as
$$
g^{-1} \cdot \big(\jac (gH) \cdot (gH) - \tr \jac (gH) \cdot (gH)\big) = 
g \cdot (\jac H \cdot H - \tr \jac H \cdot H)\mbox{.}
$$
This gives the desired result.
\end{proof}

In Theorem \ref{gnnotrace} below, we classify all rational maps $H$ which satisfy 
$\jac H \cdot H = \tr \jac H \cdot H$, under the assumption that $H$ is homogeneous and 
$\trdeg_{K} (H) \le 2$. But Gordan and N{\"o}ther classified all polynomial maps
$H$ which satisfy $\jac H \cdot H = 0$, under the assumption that $H$ is 
homogeneous, $\trdeg_{K} (H) \le 2$ and the characteristic of $K$ is zero. The following
proposition shows that we indeed generalize the result of Gordan and N{\"o}ther.

\begin{proposition} \label{gngen}
Let $H \in K(x)^n$. Then the following hold.
\begin{enumerate}

\item[\upshape (i)] If $K$ has characteristic zero, then $\jac H \cdot H = 0$ implies $H(x + tH) = H$.

\item[\upshape (ii)] If $H \in K[x]^n$, then $H(x + tH) = H$ implies that $\jac H$ is nilpotent.
In particular $\jac H \cdot H = \tr H \cdot H = 0$.

\end{enumerate}
\end{proposition}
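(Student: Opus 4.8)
The plan is to treat the two parts separately, since they go in opposite directions and use different mechanisms.

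For part (i), the key identity is that $\jac H \cdot H = 0$ says precisely that the vector field $H$ is invariant along its own flow, so the time-$t$ flow map is $x \mapsto x + tH$. Concretely, I would consider $H(x + tH)$ as a function of $t$ (working in $K[x][t]$ or a suitable localization) and compute $\parder{}{t} H(x + tH)$. By the chain rule this equals $(\jac H)(x+tH) \cdot H$. One then wants to show this is zero. The clean way: set $\Phi(t) := H(x + tH)$ and show that $\Phi$ is constant in $t$. Differentiating, $\Phi'(t) = (\jac H)\big|_{x \mapsto x+tH} \cdot H$. Now substitute $x \mapsto x + tH$ into the hypothesis $\jac H \cdot H = 0$: this gives $(\jac H)\big|_{x \mapsto x+tH} \cdot H(x+tH) = 0$, i.e. $(\jac H)\big|_{x\mapsto x+tH} \cdot \Phi(t) = 0$. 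That is not literally $\Phi'(t)$ yet, so the trick is an iteration/degree argument: write $\Phi(t) = \sum_k \Phi^{(k)} t^k$ (Taylor expansion in $t$), with $\Phi^{(0)} = H$, and show inductively that all higher coefficients vanish. Since we are in characteristic zero, $\Phi^{(k)} = \frac{1}{k!}\frac{\partial^k}{\partial t^k}\Phi(0)$, and the recursion coming from $\Phi'(t) = (\jac H)|_{x+tH}\cdot H$ together with $(\jac H)|_{x+tH}\cdot \Phi(t) = 0$ forces $\Phi'(t)$ to vanish modulo the ideal generated by lower-order relations; this is where characteristic zero is essential (we must divide by $k!$, or equivalently invoke that a polynomial with zero derivative is constant).

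For part (ii), assume $H \in K[x]^n$ and $H(x + tH) = H$. Differentiate this identity with respect to $t$: we get $(\jac H)\big|_{x \mapsto x + tH} \cdot H = 0$ identically in $t$. Now I would expand the left side as a polynomial in $t$ and read off coefficients. The coefficient of $t^0$ is $\jac H \cdot H = 0$, giving that part of the conclusion immediately. For nilpotency of $\jac H$: differentiating $H(x+tH) = H$ with respect to $x_j$ gives $(\jac H)\big|_{x+tH} \cdot (I_n + t\,\jac H) = \jac H$ (chain rule on the composition), i.e. $(\jac H)\big|_{x+tH} = \jac H \cdot (I_n + t\,\jac H)^{-1}$ as an identity of matrices over $K(x)[t]$-formal-power-series, or more carefully $(\jac H)\big|_{x+tH}\cdot(I_n + t\,\jac H) = \jac H$. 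Combining with $(\jac H)\big|_{x+tH}\cdot H = 0$: multiply the matrix identity on the right by $H$ and use $(I_n + t\jac H)H = H + t\,\jac H\cdot H = H$ (since $\jac H \cdot H = 0$), so $(\jac H)\big|_{x+tH}\cdot H = \jac H \cdot H = 0$ — consistent but not yet nilpotency. To get nilpotency, iterate the differentiated relation: from $(\jac H)\big|_{x+tH} = \jac H\cdot(I_n+t\jac H)^{-1} = \sum_{k\ge 0}(-t)^k(\jac H)^{k+1}$, the left side is a polynomial in $t$ of degree at most $\deg_t$ of $x+tH$ applied entry-wise, i.e. bounded degree, while the right side is a formal power series whose coefficients are $(-1)^k(\jac H)^{k+1}$. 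A bounded-degree power series has all high coefficients zero, hence $(\jac H)^{k+1} = 0$ for $k$ large, i.e. $\jac H$ is nilpotent. Then $\tr \jac H = 0$ (trace of a nilpotent matrix over a field — careful in positive characteristic, but the trace of a nilpotent matrix is always $0$ since its characteristic polynomial is $y_1^n$), and $\jac H \cdot H = 0$ was already shown.

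The main obstacle I anticipate is making the "bounded degree versus formal power series" argument in part (ii) fully rigorous: one must justify that $(\jac H)\big|_{x \mapsto x+tH}$, viewed in $K(x)[t]$, genuinely equals the formal expansion $\sum_k (-t)^k (\jac H)^{k+1}$ — this requires that $I_n + t\,\jac H$ be invertible over $K(x)[[t]]$ (it is, with the geometric series) and that the composition and this series agree, which follows from the differentiated chain-rule identity $(\jac H)\big|_{x+tH}\cdot(I_n+t\,\jac H) = \jac H$ plus uniqueness of the inverse. Once that is in place, comparing $t$-degrees is immediate. The characteristic-zero step in part (i) is standard but should be stated carefully as the reason the converse implication in (ii) cannot be reversed in general.
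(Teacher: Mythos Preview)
Your proposal is correct. The paper itself does not argue either direction in detail: it invokes \cite[Prop.~1.3]{1501.05168} for both parts, noting for (ii) that the characteristic-zero hypothesis in that reference is not used in the relevant implication, and then observes that $\jac H \cdot H$ is the $t^1$-coefficient of $H(x+tH)-H$ while a nilpotent matrix has zero trace. So you are supplying a direct proof where the paper defers to a citation. For part (i) your two identities $\Phi'(t) = (\jac H)|_{x+tH}\cdot H$ and $(\jac H)|_{x+tH}\cdot \Phi(t) = 0$ combine to give $\Psi'(t) = -(\jac H)|_{x+tH}\cdot \Psi(t)$ for $\Psi:=\Phi-H$ with $\Psi(0)=0$; comparing the lowest nonzero $t$-coefficient and dividing by its index (this is where characteristic zero enters) then forces $\Psi=0$ --- stating it this way is crisper than your ``modulo lower-order relations'' phrasing, but the content is the same. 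Your part (ii) argument via $(\jac H)|_{x+tH}\cdot (I_n + t\,\jac H) = \jac H$ and the geometric-series expansion of $(I_n + t\,\jac H)^{-1}$ is clean and complete; the polynomiality of the left side in $t$ (using $H\in K[x]^n$) is exactly what forces the series to terminate and hence $\jac H$ to be nilpotent.
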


\begin{proof}
(i) follows from (3) $\Rightarrow$ (2) of \cite[Prop.~1.3]{1501.05168}.

The first claim of (ii) follows from (2) $\Rightarrow$ (1.7) of \cite[Prop.~1.3]{1501.05168}, 
because the condition that $K$ has characteristic zero is not used in the proof of that.
The last claim follows because $\jac H \cdot H$
is the coefficient of $t^1$ of $H(x + tH) - H$, and $\tr \jac H = 0$ if $\jac H$ is nilpotent.
\end{proof}

In Example \ref{gncounter} below, we show that the conditions that $K$ has characteristic zero
and that $H \in K[x]$ are necessary in (i) and (ii) of Proposition \ref{gngen} respectively.
Hence it was inevitable to replace the condition that $\jac H \cdot H = 0$ of Gordan and 
N{\"o}ther in the following theorem. This is done in (3) of Theorem \ref{gnnotrace}
below, in which $\jac (h(p,q)) \cdot h(p,q) = 0$ is the only assertion which does
not follow from the condition $\trdeg_K(tH) \le 2$.

\begin{theorem} \label{gnnotrace}
Assume that $H \in K(x)^n$ such that $\trdeg_K(tH) \le 2$. Let $\tilde{H}$ be the primitive
part of $H$. Then the following statements are equivalent:
\begin{enumerate}

\item[\upshape (1)] $\jac H \cdot H = \tr \jac H \cdot H$;

\item[\upshape (2)] $\jac \tilde{H} \cdot \tilde{H}(y) = \tr \jac \tilde{H} \cdot \tilde{H}(y) = 0$;

\item[\upshape (3)] There exist $h \in K[y_1,y_2]$ which is homogeneous or zero, 
such that 
$$
\deg h > 0 \Longrightarrow c \nmid \deg h\mbox{,}
$$
where $c$ is the characteristic of $K$, and $p,q \in K[x]$, such that 
$\gcd\{p,q\} = 1$, $H = g \cdot h(p,q)$ for some $g \in K(x)$, 
and 
$$
\jac (h(p,q)) \cdot h(p,q) = 0\mbox{;}
$$

\item[\upshape (4)] There exist $f \in K[y_1]$, and $p,q \in K[x]$, 
such that $H = g \cdot f(p/q)$ for some $g \in K(x)$
and 
$$
\jac p \cdot f = \jac q \cdot f = 0\mbox{;}
$$

\item[\upshape (5)] There exist $f$ and $p,q$ as in {\upshape(4)}, 
such that in addition, $\gcd\{f_1,f_2,\ldots,\allowbreak f_n\} = 1$ 
and $\gcd\{p,q\} = 1$.

\end{enumerate}
Furthermore, $H$ is a $K[x]$-linear combination of $n - \rk \jac \tilde{H}$ vectors
over $K$ if any of {\upshape (1)}, {\upshape (2)},  {\upshape (3)},  {\upshape (4)}
and {\upshape (5)} is satisfied.
\end{theorem}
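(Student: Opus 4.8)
The plan is to derive the ``furthermore'' directly from condition (2), which — by the equivalences $(1)\Leftrightarrow(2)\Leftrightarrow(3)\Leftrightarrow(4)\Leftrightarrow(5)$ proved above — holds whenever any of (1)--(5) does. So I assume $\jac\tilde H\cdot\tilde H(y)=\tr\jac\tilde H\cdot\tilde H(y)=0$; in fact only the first identity is used. Recall that $\tilde H$, being primitive, lies in $K[x]^n$, so it has a finite monomial expansion $\tilde H=\sum_\alpha\tilde h_\alpha\,x^\alpha$ with coefficient vectors $\tilde h_\alpha\in K^n$. Put $r:=n-\rk\jac\tilde H=\dim_{K(x)}\ker\jac\tilde H$.

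The first step is to turn the single functional identity $\jac\tilde H\cdot\tilde H(y)=0$ into a finite list of linear conditions. Since $\tilde H(y)$ is just $\tilde H$ with $x$ replaced by $y$, the coefficient of the monomial $y^\alpha$ in $\tilde H(y)$ is exactly $\tilde h_\alpha$. Reading the identity $\jac\tilde H(x)\cdot\tilde H(y)=0$, which lives in $K[x][y]^n$, coefficient-by-coefficient in the variables $y$ therefore gives $\jac\tilde H\cdot\tilde h_\alpha=0$ in $K[x]^n$ for every $\alpha$. Because each $\tilde h_\alpha$ has entries in $K$, this says precisely that $\tilde h_\alpha\in\ker\jac\tilde H$, the kernel of $\jac\tilde H$ regarded as a $K(x)$-linear endomorphism of $K(x)^n$.

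The second step is a dimension count. Let $W:=\operatorname{span}_K\{\tilde h_\alpha\}\subseteq K^n$. Vectors in $K^n$ that are linearly independent over $K$ are also linearly independent over $K(x)$ (a nonvanishing maximal minor stays nonzero in $K(x)$), so $\dim_KW\le\dim_{K(x)}\ker\jac\tilde H=r$. Choose a $K$-basis $v_1,\dots,v_{r'}$ of $W$ with $r':=\dim_KW$, and, if $r'<r$, extend it by arbitrary vectors $v_{r'+1},\dots,v_r\in K^n$. Writing $\tilde h_\alpha=\sum_{j=1}^r c_{\alpha j}v_j$ with $c_{\alpha j}\in K$, one gets
$$
\tilde H=\sum_\alpha\Bigl(\sum_{j=1}^rc_{\alpha j}v_j\Bigr)x^\alpha=\sum_{j=1}^r\Bigl(\sum_\alpha c_{\alpha j}x^\alpha\Bigr)v_j ,
$$
which exhibits $\tilde H$ as a $K[x]$-linear combination of the $r=n-\rk\jac\tilde H$ vectors $v_1,\dots,v_r$ over $K$. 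Since $H$ and $\tilde H$ are dependent over $K(x)$, say $H=g\tilde H$ with $g\in K(x)$, the same $v_1,\dots,v_r$ express $H$ as well (with coefficients $g\sum_\alpha c_{\alpha j}x^\alpha$; these are in $K[x]$ whenever $g\in K[x]$, e.g.\ when $H\in K[x]^n$, where $g=\gcd\{H_1,\dots,H_n\}$).

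I expect the only delicate point to be the first step — the bookkeeping that identifies the coefficient of $y^\alpha$ in $\tilde H(y)$ with the constant vector $\tilde h_\alpha$, which is what converts $\jac\tilde H\cdot\tilde H(y)=0$ into genuinely constant kernel vectors; everything after that is linear algebra over $K$ inside $K(x)$. The passage from $\tilde H$ back to $H$ is the one place where one must keep track of whether the coefficients remain polynomial or only rational, but it does not affect the count of $K$-vectors needed.
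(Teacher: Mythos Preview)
Your argument is correct and is essentially the paper's own proof: both extract the coefficient vectors of $y^\alpha$ from the identity $\jac\tilde H\cdot\tilde H(y)=0$ (the paper writes $H(y)$ rather than $\tilde H(y)$, but this is the same up to the scalar $g(y)$), observe that these constant vectors lie in $\ker\jac\tilde H$, and bound their $K$-span by $n-\rk\jac\tilde H$. Your version is in fact slightly more careful than the paper's, since you work with the polynomial map $\tilde H$ and only then pass to $H=g\tilde H$, correctly flagging that for genuinely rational $H$ the resulting coefficients are only guaranteed to lie in $K(x)$ rather than $K[x]$; the paper's coefficient extraction from $H(y)$ tacitly assumes $H$ is polynomial.
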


\begin{proof}
The case $H = 0$ is easy, so assume from now on that $H \ne 0$.
We prove the equivalence of (1), (2), (3), (4) and (5) by way of six implications.
\begin{description}

\item[(2) \imp (1)] 
Assume (2). By substituting $y = x$ in (2), we obtain $\jac \tilde{H} \cdot \tilde{H} = 
\tr \jac \tilde{H} \cdot \tilde{H}$. Now (1) follows from Lemma \ref{gquasi}.

\item[(4) \imp (1)]
Assume (4), take $s \ge \deg f$ and define $h(y_1,y_2) := y_2^s f(y_1/y_2)$. 
From (2) $\Rightarrow$ (1) of Lemma \ref{hf}, it follows that $h \in K[y_1,y_2]$
is homogeneous of degree $s$. By substituting $y_1 = p(y)/q(y)$ in (4), we obtain 
$\jac p \cdot f\big(p(y)/q(y)\big) = \jac q \cdot f\big(p(y)/q(y)\big) = 0$, so
$$
\jac p \cdot h\big(p(y),q(y)\big) = \jac q \cdot h\big(p(y),q(y)\big) = 0\mbox{.}
$$
Define $\hat{H} := h(p,q)$. Since the row space of $\jac \hat{H}$ is generated by 
$\jac p$ and $\jac q$, it follows that
\begin{equation} \label{hatHy}
\jac \hat{H} \cdot \hat{H}(y) = 0\mbox{.}
\end{equation}
If we take the Jacobian with respect to $y$ on both sides, then we see that
$(\jac \hat{H})^2 = 0$, so $\tr \jac \hat{H} = 0$. Hence (2) with $\hat{H}$ instead
of $\tilde{H}$ follows. Now we can apply the proof of (2) $\Rightarrow$ (1)
above, but with $\hat{H}$ instead of $\tilde{H}$, to obtain (1).

\item[(5) \imp (2)]
Assume (5), take $s = \deg f$ and define $h(y_1,y_2) := y_2^s f(y_1/y_2)$. 
From (2) $\Rightarrow$ (1) of Lemma \ref{hf}, it follows that $h \in K[y_1,y_2]$
is homogeneous of degree $s$. Just as in the proof of (4) $\Rightarrow$ (1)
above, we can deduce that (2) is satisfied if we replace $\tilde{H}$ by 
$\hat{H} = h(p,q)$.

So it suffices to show that $h(p,q)$ is primitive. From Lemma \ref{hf}, 
it follows that $\gcd\{h_1,h_2,\ldots,h_n\} = 1$. 
Since $(p,q)$ is superprimitive, we deduce from Theorem \ref{ph} that 
$\gcd\{h_1(p,q),h_2(p,q),\ldots,h_n(p,q)\} = 1$. Hence $h(p,q)$ is indeed
primitive.

\item[(5) \imp (3)]
Assume (5) and take $s \ge \deg f$ such that
$$
s > 0 \Longrightarrow c \nmid s\mbox{,}
$$
where $c$ is the characteristic of $K$. Define $h(y_1,y_2) := y_2^s f(y_1/y_2)$.
From (2) $\Rightarrow$ (1) of Lemma \ref{hf}, it follows that $h \in K[y_1,y_2]$ is 
homogeneous of degree $s$. From \eqref{hatHy}, we deduce that
$\jac \hat{H} \cdot \hat{H} = 0$, where $\hat{H} = h(p,q)$. This yields (3).

\item[(1) \imp (5)]
Assume (1). Take $h$ and $p,q$ as in Theorem \ref{hmgrk2}.
Then $h$ is homogeneous, say of degree $s$, and $\gcd\{h_1,h_2,\ldots, h_n\} = 1$.
Furthermore, $\gcd\{p,q\} = 1$ and we may assume without loss of generality that 
$\deg p \le \deg q$. Let $f := h(y_1,1)$.
From Lemma \ref{hf}, it follows that $f(p/q) = q^{-s} h(p,q)$ and 
$\gcd\{f_1,f_2,\ldots,f_n\} = 1$. 

Define $f' := \parder{}{y_1} f$. Suppose first that $f(p/q)$ and $f'(p/q)$ are 
independent over $K(x)$. From Proposition \ref{gquasi}, it follows that 
$\jac \big(f(p/q)\big) \cdot f(p/q) = \tr \jac\big(f(p/q)\big) \cdot f(p/q)$, 
which is equivalent to
\begin{align*}
f'(p/q) \cdot \jac (p/q) \cdot f(p/q) 
&= f(p/q) \cdot \tr \big(f'(p/q) \cdot \jac (p/q)\big) \\
&= f(p/q) \cdot \jac (p/q) \cdot f'(p/q)\mbox{.}
\end{align*}
Since $f'(p/q)$ and $f(p/q)$ are independent over $K(x)$, their coefficients
$\jac (p/q) \cdot f(p/q)$ and $\jac (p/q) \cdot f'(p/q)$ are zero. Now (5) follows from
(i) of Lemma \ref{flem} below.

Suppose next that $f(p/q)$ and $f'(p/q)$ are dependent over $K(x)$. We show 
that we can choose $p$ and $q$ such that $\jac p = \jac q = 0$, which yields (4).
This is clear if $p/q \in K$, so assume that $p/q \notin K$. Then $f$ and $f'$ 
are dependent over $K(y_1)$. Hence $f_i f' = f'_i f$ for each $i \in \{1,2,\ldots,n\}$. 
Since $K[x]$ is a PSP-domain, it follows from $\gcd\{f_1,f_2,\ldots,f_n\} = 1$ that 
$\gcd\{f'_if_1,f'_if_2,\ldots,f'_if_n\} = f'_i$. Consequently, $f_i \mid f'_i$ for each $i$.

On the other hand, $\deg f_i > \deg f'_i$ if $f_i \ne 0$, so we can deduce that
$f' = 0$. Consequently, $f \in K[y_1^c]^n$, where $c$ is the characteristic of $K$. 
By replacing $p$ and $q$ by $p^c$ and $q^c$ and adapting
$f$ accordingly if $c > 0$, we obtain $\jac p = \jac q = 0$ indeed. 

\item[(3) \imp (4)]
Assume (3). Let $s$ be the degree of $h$ and define $f := h(y_1,1)$ and 
$f' := \parder{}{y_1} f$. Assume without loss of generality that $\deg p \le \deg q$.

Suppose first that $f(p/q)$ and $f'(p/q)$ are independent over $K(x)$. Then
\begin{align*}
0 &= \jac \big(h(p,q)\big) \cdot h(p,q)\\
&= q^s \jac \big(f(p/q)\big) \cdot q^s f(p/q) + f(p/q) \cdot \jac (q^s) \cdot q^s f(p/q) \\
&= q^{2s} f'(p/q) \cdot \jac (p/q) \cdot f(p/q) + s q^{2s-1} f(p/q) \cdot \jac q \cdot f(p/q)\mbox{.}
\end{align*}
Since $f'(p/q)$ and $f(p/q)$ are independent over $K(x)$, the factors 
$\jac (p/q) \cdot f(p/q)$ and $s \jac q \cdot f(p/q)$ of their coefficients are zero. 
Furthermore, $s \ge \deg f > \deg f' \ge 0$, so $c \nmid s$ by assumption.
Hence $s \ne 0$ in $K$ and $\jac (p/q) \cdot f(p/q) = 0 = \jac q \cdot f(p/q)$. 
Now (4) follows from (ii) of Lemma \ref{flem} below.

Suppose next that $f(p/q)$ and $f'(p/q)$ are dependent over $K(x)$. If $p/q \in K$ or 
$\gcd\{f_1,f_2,\ldots,f_n\} = 1$, then we can proceed as in the proof of 
(1) $\Rightarrow$ (5) to obtain (4). So assume that $p/q \notin K$ and that 
$\gcd\{f_1,f_2,\ldots,f_n\} \ne 1$. Then $f$ and $f'$ are dependent over $K(y_1)$.

Since $K[y_1]$ is a GCD-domain, we can replace $f$ by its primitive part and adapt $g$
accordingly, because the product rule of differentiation tells us that $f$ and $f'$ will 
remain dependent over $K(y_1)$. 
So $\gcd\{f_1,f_2,\ldots,f_n\} = 1$, and we can proceed as in the proof of 
(1) $\Rightarrow$ (5) above to obtain (4).

\end{description}
To prove the last claim of this theorem, assume that any of (1), (2), (3), (4) and (5) 
is satisfied. Then (2) is satisfied, so that
$$
\jac \tilde{H} \cdot \tilde{H}(y) = 0 = \jac \tilde{H} \cdot H(y)\mbox{.}
$$
Let $H^{(\alpha)}$ be the vector over $K$ of coefficients of 
$y_1^{\alpha_1}y_2^{\alpha_2}\cdots y_n^{\alpha_n}$ of $H(y)$. 
From $\jac \tilde{H} \cdot H(y) = 0$, it follows that 
$\jac \tilde{H} \cdot H^{(\alpha)} = 0$. As $\dim \ker \jac \tilde{H} = 
n - \rk \jac \tilde{H}$, there can only be $n - \rk \jac \tilde{H}$
independent vectors $H^{(\alpha)}$.
\end{proof}

\begin{lemma} \label{flem}
Let $f \in K[y_1]^n$ and suppose that $p,q \in K[x]$, such that $\gcd\{p,q\} = 1$ and
$\deg q \ge \deg p$. Then $\jac p \cdot f = \jac q \cdot f = 0$ in the following cases.
\begin{enumerate}

\item[\upshape (i)] $\jac (p/q) \cdot f(p/q) = \jac (p/q) \cdot f'(p/q) = 0$, 
where $f' = \parder{}{y_1} f$,

\item[\upshape (ii)] $\jac (p/q) \cdot f(p/q) = \jac q \cdot f(p/q) = 0$.

\end{enumerate}
\end{lemma}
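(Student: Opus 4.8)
The goal is to prove that $\Phi := \jac p \cdot f$ and $\Psi := \jac q \cdot f$, regarded as elements of $K[x][y_1]$, both vanish; this is precisely the claim $\jac p \cdot f = \jac q \cdot f = 0$. Writing $f = \sum_k c_k y_1^k$ with coefficient vectors $c_k \in K^n$, the coefficient of $y_1^k$ in $\Phi$ is $\jac p \cdot c_k$, whose $x$-degree is at most $\deg p - 1 < \deg p \le \deg q$; likewise the coefficient of $y_1^k$ in $\Psi$ is $\jac q \cdot c_k$ of $x$-degree at most $\deg q - 1 < \deg q$, and the coefficient of $y_1^k$ in $\Gamma := q\Phi - p\Psi = (q\jac p - p\jac q)\cdot f$ is $q\,(\jac p\cdot c_k) - p\,(\jac q\cdot c_k)$ of $x$-degree at most $\deg p + \deg q - 1 < 2\deg q$. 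The case where $p$ or $q$ is constant is trivial, since then both Jacobians vanish, so I assume $\deg q \ge 1$.

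The crucial tool is the following observation: if $P \in K[x][y_1]\setminus\{0\}$ satisfies $P(p/q) = 0$ in $K(x)$, then its leading coefficient in $y_1$ is divisible by $q$ in $K[x]$ and hence has $x$-degree at least $\deg q$. Indeed, over $K(x)$ one has $P = (y_1 - p/q)P_1$, hence $qP = (qy_1 - p)P_1$; since $\gcd\{p,q\} = 1$ the polynomial $qy_1 - p$ is primitive over $K[x]$, so Gauss' lemma (available because $K[x]$ is a GCD-domain, hence a GL-domain, as noted in the introduction) lets us descend to $P = (qy_1 - p)P_3$ with $P_3 \in K[x][y_1]$, and then the leading $y_1$-coefficient of $P$ is $q$ times that of $P_3$. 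If moreover $P'(p/q) = 0$ for $P' = \parder{}{y_1}P$, then $p/q$ is a root of $P$ of multiplicity at least $2$ over the field $K(x)$ --- this holds in every characteristic, since a simple root of $P$ is never a root of $P'$ --- and the same argument with $(qy_1 - p)^2$ in place of $qy_1 - p$ shows the leading $y_1$-coefficient of $P$ is divisible by $q^2$, hence has $x$-degree at least $2\deg q$.

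In case (ii), $\jac q\cdot f(p/q) = 0$ gives $\Psi(p/q) = 0$, and the identity $\jac(p/q) = q^{-2}(q\jac p - p\jac q)$ turns $\jac(p/q)\cdot f(p/q) = 0$ into $q\,\Phi(p/q) = p\,\Psi(p/q)$, so $\Phi(p/q) = 0$ as well. Were $\Phi$ nonzero, its leading $y_1$-coefficient would have $x$-degree $\ge \deg q$, contradicting the bound $< \deg q$ above; so $\Phi = 0$, and $\Psi = 0$ by the same argument. In case (i), the hypotheses say $\Gamma(p/q) = 0$ and, after differentiating $\Gamma(y_1) = (q\jac p - p\jac q)\cdot f(y_1)$ with respect to $y_1$, also $\Gamma'(p/q) = 0$. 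Were $\Gamma$ nonzero, its leading $y_1$-coefficient would have $x$-degree $\ge 2\deg q$, contradicting the bound $< 2\deg q$; so $\Gamma = 0$, i.e.\@ $q\Phi = p\Psi$ in $K[x][y_1]$. Comparing coefficients of $y_1^k$ gives $q\,(\jac p\cdot c_k) = p\,(\jac q\cdot c_k)$; since $\gcd\{p,q\} = 1$ this forces $q \mid \jac q\cdot c_k$, but $\deg(\jac q\cdot c_k) < \deg q$, so $\jac q\cdot c_k = 0$ and then $\jac p\cdot c_k = 0$. Hence $\Phi = \Psi = 0$ in this case too.

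The only delicate points are the descent of the divisibility from $K(x)[y_1]$ to $K[x][y_1]$ via primitivity of $qy_1 - p$, and the characteristic-free fact that a simple root is not a root of the derivative; beyond these the argument is a bookkeeping of degrees in $x$ versus $y_1$. I would isolate the ``leading-coefficient divisibility'' observation as its own paragraph, since it is invoked three times.
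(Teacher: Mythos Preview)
Your argument is correct and takes a genuinely different route from the paper. One small wording slip: when you write ``the case where $p$ or $q$ is constant is trivial, since then both Jacobians vanish'', this is only literally true when $q$ is constant (forcing $p$ constant as well by $\deg q\ge\deg p$); if $p$ alone is constant, $\jac q$ need not vanish. But your conclusion ``so I assume $\deg q\ge 1$'' is all you actually use, and the remainder of your proof handles $p$ constant without trouble, so this does not affect the argument.

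The paper proceeds coefficient-by-coefficient. Writing $f=\sum_i v_i y_1^i$, it expands $q^{s+2}\jac(p/q)\cdot f(p/q)$ and reduces modulo $q^{s+2-i}$ for the largest offending index $i$ to extract the single relation $\jac p\cdot v_i=\jac q\cdot v_{i-1}$ (a \emph{shifted} recursion between consecutive coefficients); repeating with $y_1 f'$ in place of $f$ gives $i\,\jac p\cdot v_i=(i-1)\,\jac q\cdot v_{i-1}$, and subtracting the two yields $\jac p\cdot v_i=\jac q\cdot v_i=0$. Case~(ii) is handled similarly, first obtaining $\jac q\cdot v_i=0$ from $\jac q\cdot f(p/q)=0$, then using $\jac p=q\,\jac(p/q)+(p/q)\jac q$ to reduce $\jac p$ to the same situation.

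Your approach instead packages all coefficients at once into the polynomial $\Gamma=(q\jac p-p\jac q)\cdot f\in K[x][y_1]$ and extracts a single global constraint: the double root at $p/q$ forces $(qy_1-p)^2\mid\Gamma$ in $K[x][y_1]$ via Gauss' lemma, hence $q^2$ divides the leading $y_1$-coefficient of $\Gamma$, which the degree bound $<2\deg q$ forbids unless $\Gamma=0$. This yields the \emph{unshifted} relation $q\,\jac p\cdot c_k=p\,\jac q\cdot c_k$, from which coprimality and one more degree comparison finish. What you gain is conceptual clarity and a uniform treatment of both cases with a single lemma; what the paper's argument gains is that it stays entirely inside elementary divisibility in $K[x]$ and never invokes Gauss' lemma or the factor theorem over $K(x)[y_1]$.
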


\begin{proof}
If $q \in K$, then $p \in K$ as well, and $\jac p \cdot f = \jac q \cdot f = 0$ is
trivially satisfied. So assume that $q \notin K$. Let $s \ge \deg f$ and write 
\begin{alignat*}{7}
f &=\;& v_{s+1} y_1^{s+1} &\,+\,& v_s y_1^s &\,+\,& \cdots 
&\,+\,& v_1 y_1 &\,+\,& v_0 &\,+\,& v_{-1} y_1^{-1}&
\qquad \mbox{and} \\
y_1 f' &=\;& (s+1) v_{s+1} y_1^{s+1} &\,+\,& s v_s y_1^s &\,+\,& \cdots 
&\,+\,& v_1 y_1 && &\,-\,& v_{-1} y_1^{-1}\mbox{,}&
\end{alignat*}
where $v_i \in K^n$ for all $i$. Notice that $v_{s+1} = v_{-1} = 0$. 
\begin{enumerate}

\item[(i)] Assume that $\jac (p/q) \cdot f(p/q) = \jac (p/q) \cdot f'(p/q) = 0$. We first show 
that $\jac p \cdot v_i = \jac q \cdot v_{i-1}$ for all $i$. Hence suppose that there exists 
an $i$ such that $\jac p \cdot v_i \ne \jac q \cdot v_{i-1}$. Take such an $i$ as large as possible. 
From $\jac (p/q) \cdot f(p/q) = 0$, it follows that
\begin{align*}
0 &= q^2 \jac (p/q) \cdot q^{s+1} f(p/q) \\
&= (q \jac p - p \jac q) \cdot 
\big(v_{s+1} p^{s+1} + v_s p^sq + \cdots + v_1 pq^s + v_0 q^{s+1}\big) \\
&\equiv q \jac p \cdot v_i p^i q^{s+1-i} - 
        p \jac q \cdot v_{i-1}p^{i-1} q^{s+2-i} \pmod{q^{s+3-i}}\mbox{.}
\end{align*}
Consequently, $q^{s+3-i} \mid p^i q^{s+2-i} (\jac p \cdot v_i - \jac q \cdot v_{i-1})$.

Since $\gcd\{p,q\} = 1$, we deduce that 
$q \mid (\jac p \cdot v_i - \jac q \cdot v_{i-1})$. By comparing degrees on both sides, 
we see that $\jac p \cdot v_i - \jac q \cdot v_{i-1} = 0$, which contradicts the definition 
of $i$. So $\jac p \cdot v_i = \jac q \cdot v_{i-1}$ for all $i$. 

Using $\jac (p/q) \cdot (p/q)f'(p/q) = 0$ instead of $\jac (p/q) \cdot f(p/q) = 0$, 
we can deduce that $\jac p \cdot i v_i = \jac q \cdot (i-1) v_{i-1}$ for all $i$ as well. 
Combining both, we conclude that $\jac p \cdot v_i = \jac q \cdot v_i = 0$ for all $i$, 
which yields $\jac p \cdot f = \jac q \cdot f = 0$.

\item[(ii)] Assume that $\jac (p/q) \cdot f(p/q) = \jac q \cdot f(p/q) = 0$. 
It suffices to show that $\jac p \cdot v_i = \jac q \cdot v_i = 0$ for all $i$.
Hence suppose that there exists an $i$ such that either $\jac p \cdot v_i \ne 0$
or $\jac q \cdot v_i \ne 0$. Take such an $i$ as large as possible. 
From $\jac q \cdot f(p/q) = 0$, it follows that
\begin{align*}
0 &= \jac q \cdot q^{s+1} f(p/q) \\
&= \jac q \cdot \big(v_{s+1} p^{s+1} + v_s p^sq + \cdots + v_1 pq^s + v_0 q^{s+1}\big) \\
&\equiv \jac q \cdot v_i p^i q^{s+1-i} \pmod{q^{s+2-i)}}\mbox{.}
\end{align*}
Consequently, $q^{s+2-i} \mid p^i q^{s+1-i} \jac q \cdot v_i$.
Since $\gcd\{p,q\} = 1$, we deduce that $q \mid \jac q \cdot v_i$.
By comparing degrees on both sides, we see that $\jac q \cdot v_i = 0$.

Since $\jac p = q\jac (p/q) + (p/q) \jac q$, it follows 
from $\jac (p/q) \cdot f(p/q) = \jac q \cdot f(p/q) = 0$ that 
$\jac p \cdot f(p/q) = 0$ as well. Hence $\jac p \cdot v_i = 0$ can be
deduced in a similar manner as $\jac q \cdot v_i = 0$ if $p \notin K$. 
If $p \in K$, then $\jac p \cdot v_i = 0$ in any case. This contradicts
the definition of $i$.
\qedhere

\end{enumerate}
\end{proof}

The reader may verify that (3) of Theorem \ref{gnnotrace} can be replaced by 
$\jac \tilde{H} \cdot \tilde{H} = 0$ if the characteristic of $K$ is zero.
The following example shows that this is not the case for positive characteristic.

\begin{example} \label{gncounter}
The map $H = (1,x_2/x_1)$ satisfies $H(x+tH) = H$ and hence also $\jac H \cdot H = 0$,
but $\jac H \cdot H(y) \ne 0$ and $\jac H$ is not nilpotent. In particular, the condition that
$H \in K[x]^n$ is necessary in (ii) of Proposition \ref{gngen}.

The map $\tilde{H} = (x_1^s H,x_2^s) = 
(x_1^s, x_1^{s-1}x_2, x_2^s)$ does not satisfy $\tilde{H}(x+t\tilde{H}) = \tilde{H}$, 
but does satisfy $\jac \tilde{H} \cdot \tilde{H} = 0$, provided $s$ is divisible by the 
characteristic of $K$. In particular, the condition that $K$ has characteristic zero
is necessary in (i) of Proposition \ref{gngen}.

But just as with $H$, $\jac \tilde{H} \cdot \tilde{H}(y) \ne 0$ and $\jac \tilde{H}$ 
is not nilpotent. Hence the condition that the degree of $h$ is not divisible by the 
characteristic of $K$ in (3) of Theorem \ref{gnnotrace} is necessary.
\end{example}


\bibliographystyle{hmgrk2qt}
\bibliography{hmgrk2qt}

\end{document}